        \theoremstyle{plain}
        \newtheorem{theorem}{Theorem}[section]
        \newtheorem{corollary}[theorem]{Corollary}
        \newtheorem{lemma}[theorem]{Lemma}
        \newtheorem{question}[theorem]{Question}
        \newtheorem{proposition}[theorem]{Proposition}
        \newtheorem{maintheorem}{Theorem}
        \theoremstyle{definition}
        \newtheorem{definition}[theorem]{Definition}
        \newtheorem{example}[theorem]{Example}
        \newtheorem*{example*}{Example}
        \theoremstyle{remark}
        \newtheorem{remark}[theorem]{Remark}
        \newtheorem*{remark*}{Remark}  
        \newcommand{\suchthat}{\,:\,}
        \newcommand{\itemref}[1]{\eqref{#1}}
        \newcommand{\mathscript}{\mathcal}
        \newcommand{\Z}{\mathbb{Z}}
        \newcommand{\Q}{\mathbb{Q}}
        \newcommand{\Orb}{\mathcal{O}}   
       \DeclareMathOperator{\spec}{Spec} 
        \newcommand{\red}[1]{{#1}_{\mathrm{red}}} 
           \newcommand{\et}{\mathrm{\acute{e}t}}
           \newcommand{\lisset}{\mathrm{lis\text{\nobreakdash-}\acute{e}t}}
        \newcommand{\MOD}{\mathsf{Mod}}    
        \DeclareMathOperator{\coker}{coker}
         \DeclareMathOperator{\Hom}{Hom}
        \DeclareMathOperator{\Ext}{Ext}
        \DeclareMathOperator{\Tor}{Tor}
        \DeclareMathOperator{\supp}{supp}
        \newcommand{\COHO}[1]{\mathcal{H}^{{#1}}}
        \newcommand{\trunc}[1]{\tau^{{#1}}}
        \newcommand{\RDERF}{\mathsf{R}}
        \newcommand{\LDERF}{\mathsf{L}}
        \newcommand{\STor}{\mathcal{T}or}
        \newcommand{\DCAT}{\mathsf{D}}
        \newcommand{\RHom}{\RDERF\!\Hom}
        \newcommand{\SHom}{\mathcal{H}om}
        \newcommand{\SRHom}{\RDERF\SHom}
        \DeclareMathOperator{\supph}{supph} 
        \newcommand{\QCOH}{\mathsf{QCoh}}
        \newcommand{\COH}{\mathsf{Coh}}
        \newcommand{\KGRP}{\mathsf{K}_0}
        \renewcommand{\bar}[1]{\overline{{#1}}}
        \newcommand{\ID}[1]{\mathrm{Id}_{#1}}
        \newcommand{\tensor}{\otimes}
        \newcommand{\ltensor}{\overset{\LDERF}{\otimes}}
        \newcommand{\homotopic}{\simeq}
        \newcommand{\opp}{\circ}
                \newcommand{\AB}{\mathsf{Ab}}
\newcommand{\fndefn}[1]{\emph{{#1}}}
\renewcommand{\subset}{\subseteq}
\numberwithin{equation}{section}
\newcommand{\TRICAT}{\mathbf{TCat}}
\newcommand{\qcsubscript}{\mathrm{qc}} 
\newcommand{\DQCOH}[1][]{\DCAT_{\qcsubscript{#1}}} 
\newcommand{\QCHR}[1][]{\mathcal{Q}_{{#1}}} 
\newcommand{\QCPSH}[1]{(#1_{\qcsubscript})_*} 
\newcommand{\MODPSH}[1]{(#1_{\lisset})_*} 
\newcommand{\QCPBK}[1]{#1^*_{\qcsubscript}}
\newcommand{\GL}{\mathrm{GL}} 
\newcommand{\Gm}{\mathbb{G}_m} 
\newcommand{\Ga}{\mathbb{G}_a} 
\newcommand{\REST}[1][]{\mathrm{res}_{{#1}}} 
\newcommand{\shv}[1]{\mathcal{#1}} 
\newcommand{\cplx}[1]{\shv{#1}} 
\newcommand{\VB}[1]{\mathbf{VB}({#1})} 
\newcommand{\spref}[1]{\href{http://stacks.math.columbia.edu/tag/#1}{#1}}
\newcommand{\labitem}[2]{%
\def\@itemlabel{(\textbf{#1})}
\item
\def\@currentlabel{\textbf{#1}}\label{#2}}
\newcommand{\REP}[2][]{\mathbf{Rep}^{{#1}}/{#2}}
\newcommand{\crisp}{crisp}%
\newcommand{\Crisp}{Crisp}%
\newcommand{\crispness}{crispness}%
\DeclareMathOperator{\Br}{Br}
\newcommand{\cms}{\mathrm{cms}}
\newcommand{\cartsubscript}{\mathrm{cart}} 
\newcommand{\DCART}[1][]{\DCAT_{\cartsubscript{#1}}}
\newcommand{\conssubscript}{\mathrm{c}} 
\newcommand{\DCONS}[1][]{\DCAT_{\conssubscript{#1}}}
\DeclareMathOperator{\kar}{char}
\newcommand{\DCATinfty}{\mathcal{D}}
\newcommand{\DQCOHinfty}[1][]{\DCATinfty_{\qcsubscript{#1}}} 
\newcommand{\MODinfty}{\mathcal{M}\mathsf{od}}
\newcommand{\QCOHinfty}{\mathcal{QC}\mathsf{oh}}
\newcommand{\sX}{\mathfrak{X}}
\theoremstyle{theorem}
\newtheorem*{corollary*}{Corollary}
\theoremstyle{definition}
\newtheorem*{definition*}{Definition}
\title{Perfect complexes on algebraic stacks}
\date{May 24, 2017}
\author[J. Hall]{Jack Hall}
\address{Department of Mathematics\\University of Arizona\\Tucson, AZ 85721-0089\\USA}
\email{jackhall@math.arizona.edu}
\author[D. Rydh]{David Rydh}
\address{KTH Royal Institute of Technology\\Department of Mathematics\\SE\nobreakdash-100\ 44\ Stockholm\\Sweden}
\email{dary@math.kth.se}
\thanks{This collaboration was supported by the G\"oran Gustafsson foundation.
The first author was supported by the Australian Research Council DE150101799.
The second author is supported by the Swedish Research Council
2011-5599.}
\subjclass[2010]{Primary 14F05; secondary 13D09, 14A20, 18E30}
\keywords{Derived categories, algebraic stacks, compact generation, perfect complexes}
\begin{document}
\begin{abstract}
  We develop a theory of unbounded derived categories of quasi-coherent sheaves
on algebraic stacks. In particular, we show that these categories are compactly
generated by perfect complexes for stacks that either have finite stabilizers
or are local quotient stacks. We also extend To\"en and Antieau--Gepner's
results on derived Azumaya algebras and compact generation of sheaves on linear
categories from derived schemes to derived Deligne--Mumford stacks. These are
all consequences of our main theorem: compact generation of a presheaf of
triangulated categories on an algebraic stack is local for the quasi-finite
flat topology.

\end{abstract}
\maketitle
\section*{Introduction}
Our first main result is the following.
\begin{maintheorem}\label{thm:main_qf} 
  Let $X$ be a quasi-compact algebraic stack with quasi-finite and
  separated diagonal. Then the unbounded derived category $\DQCOH(X)$, of
  $\Orb_X$-modules with quasi-coherent cohomology, is
  compactly generated by a single perfect complex. Moreover, for every quasi-compact open subset $U \subseteq X$, there exists a compact perfect complex with support exactly $X\setminus U$. 
\end{maintheorem}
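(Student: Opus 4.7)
The plan is to derive both statements from the qf-flat descent theorem for compact generation advertised in the abstract. First, observe that the \emph{moreover} clause implies compact generation of $\DQCOH(X)$ (take $U = \emptyset$), since a compact object with full support generates the entire category: its right orthogonal is a localizing subcategory whose objects vanish at every point, hence is zero. More generally, a compact generator of the triangulated subcategory $\DQCOH[,X\setminus U](X)$ of complexes with cohomology supported on $X \setminus U$ automatically has support exactly equal to $X \setminus U$, because the skyscraper at any point of $X \setminus U$ lies in the subcategory and must be detected by the generator. So it suffices to show that $\DQCOH[,X\setminus U](X)$ is compactly generated for every quasi-compact open $U \subseteq X$.

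For this I would invoke a local structure theorem for stacks with quasi-finite and separated diagonal (in the spirit of Rydh, building on Keel--Mori) to obtain a representable, quasi-finite, and flat surjection $p \colon V \to X$ with $V$ a quasi-compact quasi-separated scheme. Its preimage $U_V := V \times_X U$ is a quasi-compact open in $V$, and by Thomason's classical theorem the subcategory $\DQCOH[,V \setminus U_V](V)$ admits a compact perfect generator (built by gluing Koszul-type complexes over affine opens). Applying the qf-flat descent theorem to the presheaf of triangulated categories $Y \mapsto \DQCOH[,Y \setminus U_Y](Y)$ on the small qf-flat site of $X$, one concludes that $\DQCOH[,X\setminus U](X)$ is likewise compactly generated, which as noted above finishes both parts of the theorem.

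The principal obstacle will be verifying that the presheaf $Y \mapsto \DQCOH[,Y \setminus U_Y](Y)$ satisfies the hypotheses of the qf-flat descent theorem -- in particular, that qf-flat pullback preserves the supported subcategories and their compact objects, and that the appropriate Mayer--Vietoris / \v{C}ech descent property holds along qf-flat covers. These properties should follow from flat base change and the compatibility of formation of support with flat pullback, but they must be checked carefully in the present stacky setting where the diagonal may fail to be unramified. A secondary subtlety is the local structure reduction itself: producing a genuine qf-flat cover by a scheme for a stack whose diagonal is only quasi-finite and separated (rather than Deligne--Mumford) requires a sufficiently strong version of the structure theorem, and one may need to first reduce to an algebraic space and then to a scheme via a further étale decomposition.
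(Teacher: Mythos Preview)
Your overall strategy is correct and matches the paper's proof: invoke Rydh's presentation theorem \cite[Thm.~7.1]{MR2774654} to obtain a representable, separated, quasi-finite, faithfully flat, finitely presented morphism $p\colon X'\to X$ with $X'$ affine, verify the conclusion on $X'$, and descend via the quasi-finite flat descent theorem (the paper's Theorem~\ref{thm:main_list}). The structural concern you raise at the end is exactly handled by that citation, and your reduction ``it suffices to show that $\DQCOH[,X\setminus U](X)$ is compactly generated for every quasi-compact open $U$'' is correct.

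There is, however, a genuine error in your first paragraph. The claim ``a compact object with full support generates the entire category'' is false for stacks. Take $X=BG$ for a non-trivial finite group $G$ over a field of characteristic zero: the structure sheaf $\Orb_{BG}$ is compact and perfect with full support, yet its right orthogonal contains every complex built from non-trivial irreducible representations, so $\Orb_{BG}$ does not generate $\DQCOH(BG)$. Your argument ``its right orthogonal\dots\ vanishes at every point'' conflates the vanishing of the \emph{sheaf} $\SRHom(P,M)$ (which does follow from full support, cf.\ Lemma~\ref{L:local-generator}) with the vanishing of the \emph{global} $\RHom(P,M)$ (which is what generation requires). Fortunately this error is inessential to your reduction: the case $U=\emptyset$ of ``$\DQCOH[,X\setminus U](X)$ is compactly generated by one object'' \emph{is} the main statement directly, so you should simply delete the faulty intermediate sentence.

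One further remark on packaging. You propose to fix $U$ and apply descent to the presheaf $Y\mapsto \DQCOH[,Y\setminus U_Y](Y)$. This can be made to work, but the paper instead applies descent once to $\DQCOH$ itself, with the support condition built into the inductive hypothesis (the notion of $\beta$-\crispness{} in Definition~\ref{def:beta-perfect}, which demands compact generation with supports for \emph{all} opens simultaneously, and even after passing to \'etale neighborhoods). The reason is that the Thomason localization step inside the d\'evissage (Proposition~\ref{prop:nis_square}) needs compact objects with support along \emph{auxiliary} closed subsets arising from the Mayer--Vietoris decomposition, not just along the original $X\setminus U$; bundling all supports together is what makes the induction close. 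Your per-$U$ approach would end up re-proving this bundled statement anyway.
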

This generalizes the results of A.~Bondal and M.~Van den Bergh
\cite[Thm.~3.1.1]{MR1996800} for schemes and B. To\"en
\cite[Cor.~5.2]{MR2957304} for Deligne--Mumford
stacks admitting coarse moduli spaces (i.e.,~$X$ has finite inertia).
While we can show that every compact object of $\DQCOH(X)$ is a perfect complex (Lemma \ref{L:compact_conc}), a subtlety in
Theorem \ref{thm:main_qf} is that the converse does not always hold in
positive characteristic. Indeed, 
if $X$ is not tame, then the structure sheaf is perfect but not compact. 

Theorem \ref{thm:main_qf}, as well as the theory developed to establish it, have been used to classify 
the thick tensor ideals in $\DQCOH(X)^c$ \cite{hall_balmer_cms} (generalizing work of \cite{MR2570954,MR2927050}), to resolve the telescope conjecture for 
algebraic stacks \cite{telescope-stacks} (extending \cite{MR3161100}), and for results on dg-enhancements \cite{canonaco_stellari_dg_grothendieck,bergh_lunts_schnurer_geometricity}.

Extending Theorem \ref{thm:main_qf} to certain stacks with infinite stabilizer groups is our second main result. We briefly recall a notion from \cite[\S2]{rydh-2009}: an algebraic stack $X$ is of \fndefn{s-global type} if \'etale-locally it is the quotient of a quasi-affine scheme by $\GL_N$ for some $N$. 
\begin{maintheorem}\label{thm:main_globaltype}
  Let $X$ be an algebraic stack of s-global type. If $X$ is of equicharacteristic zero (i.e., it is a $\Q$-stack), then the unbounded
  derived category $\DQCOH(X)$ is compactly generated by a countable set of perfect complexes. Moreover, for every quasi-compact open subset $U \subseteq X$, there exists a compact perfect complex with support exactly $X\setminus U$. 
\end{maintheorem}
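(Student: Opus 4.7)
My plan is to combine the paper's local-to-global principle for compact generation (quoted in the introduction as the main theorem) with an explicit construction of generators on the local model $[V/\GL_N]$. By the definition of s-global type there is a representable \'etale cover $X' \to X$ with $X' \simeq [V/\GL_N]$ for some quasi-affine $\Q$-scheme $V$ and some $N$. \'Etale covers are a special case of quasi-finite flat covers, so the descent theorem reduces the problem to producing, on $[V/\GL_N]$, a countable set of compact perfect generators together with, for each quasi-compact open $U$, a compact perfect complex with support exactly the complement. I would also verify that the descent step preserves both countability of the generating set and the support property, which follows from standard gluing arguments for perfect complexes with prescribed support.

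On the local model I would exploit the representation theory of $\GL_N$. In characteristic zero $\GL_N$ is linearly reductive, so $(-)^{\GL_N}$ on $\QCOH$ is exact and commutes with arbitrary direct sums. Let $\{V_\lambda\}_{\lambda\in\Lambda}$ enumerate the (countably many) irreducible algebraic representations of $\GL_N$, and let $\cplx{F}_\lambda$ denote the locally free sheaf on $[V/\GL_N]$ obtained by pulling $V_\lambda$ back along $[V/\GL_N]\to B\GL_N$. Each $\cplx{F}_\lambda$ is perfect, and the identity
\[
\RHom_{[V/\GL_N]}(\cplx{F}_\lambda,\cplx{G}) \;=\; \bigl(\RDERF\Gamma(V,\cplx{F}_\lambda^{\vee}\otimes \cplx{G})\bigr)^{\GL_N}
\]
shows compactness: since $V$ is quasi-affine, $\RDERF\Gamma(V,-)$ commutes with direct sums on $\DQCOH(V)$, and taking $\GL_N$-invariants commutes with sums by linear reductivity. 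The collection $\{\cplx{F}_\lambda\}_\lambda$ generates because any $\cplx{G}$ right-orthogonal to all of them has every isotypic component of $\RDERF\Gamma(V,\cplx{G})$ vanishing, hence is zero on the quasi-affine $V$, hence is zero on $[V/\GL_N]$.

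For the support assertion, let $U\subseteq [V/\GL_N]$ be quasi-compact open with complement $Z$, and let $Z_V\subseteq V$ be its preimage. Because $V$ is quasi-affine and $\GL_N$ is linearly reductive, one can find finitely many $\GL_N$-equivariant morphisms $\cplx{F}_{\mu_i}^{\vee}\to\Orb_V$ whose images generate the ideal of $Z_V$; equivalently, finitely many invariant sections of equivariant bundles cutting out $Z_V$ set-theoretically. The associated Koszul complex descends to a perfect complex on $[V/\GL_N]$ with support exactly $Z$, and compactness follows from the same computation as in the previous paragraph. Together with the countable generating set this proves the theorem on the local model, which then globalizes by descent.

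The principal obstacle is the compactness of the $\cplx{F}_\lambda$: both the quasi-affineness of $V$ and the characteristic-zero hypothesis enter essentially. Without quasi-affineness $\RDERF\Gamma(V,-)$ need not commute with direct sums, and without characteristic zero $\GL_N$-invariants fail to be exact, so the construction collapses. This is precisely why the statement is restricted to $\Q$-stacks, and it parallels the tameness issue remarked on after Theorem~\ref{thm:main_qf}. The descent and support arguments are comparatively formal once the local construction is in hand.
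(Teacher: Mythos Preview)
Your approach is essentially the paper's: reduce via Theorem~\ref{thm:main_list} to the local model $[V/\GL_N]$, and verify compact generation with supports there. The paper packages the local step more abstractly---it observes that $[V/\GL_N]$ has the $\aleph_0$-resolution property (Proposition~\ref{prop:beta-res}) and is concentrated (since $\GL_N$ is linearly reductive over $\Q$), then invokes Proposition~\ref{prop:perf_gen_res_property} to conclude $\aleph_0$-\crispness. Your explicit construction with the $\cplx{F}_\lambda$ and equivariant Koszul complexes is precisely what underlies that proposition in this case: the $\cplx{F}_\lambda$ are exactly the resolving bundles pulled back from $B\GL_N$, compactness is concentratedness, and your Koszul complex is the one built in the proof of Proposition~\ref{prop:perf_gen_res_property}. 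So the two arguments coincide once unpacked.

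One point to tighten: Theorem~\ref{thm:main_list} requires $\beta$-\crispness{} of $X'=[V/\GL_N]$, and by Definition~\ref{def:beta-perfect} this asks for a perfect complex with prescribed support not just on $X'$ but on \emph{every} quasi-compact separated \'etale $W\to X'$. You only treat $W=X'$. The fix is short: since $[V/\GL_N]$ has affine diagonal, any such $W\to X'$ is quasi-affine by Zariski's Main Theorem, so $W$ again has the $\aleph_0$-resolution property (Lemma~\ref{lem:quaff_beta_res}) and your Koszul construction goes through verbatim on $W$. The paper's Proposition~\ref{prop:perf_gen_res_property} makes exactly this reduction in its first sentence. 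Without this observation the input to Theorem~\ref{thm:main_list} is not established, so do include it.
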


Stacks of s-global type are frequently encountered in practice. Sumihiro's Theorem~\cite{MR0337963} and its recent generalization by Brion~\cite{MR3329192} show that many quotient stacks are of s-global type (Proposition \ref{P:sglobal_sumi}). Thus, we have the following corollary (see Corollary \ref{C:compact-generated-stacks} for an amplification).

\begin{corollary*}\label{C:quotient-stacks}
 Let $X$ be a variety over a field $k$ of characteristic zero. Let $G$ be an
 affine algebraic $k$-group acting on $X$. If $X$ is either (a) normal or
 (b) semi-normal and quasi-projective, then $\DCAT( \QCOH_G(X) )=\DQCOH([X/G])$ is
 compactly generated.
 Moreover, for every $G$-invariant open subset $U \subseteq X$, there exists a
 perfect $G$-equivariant complex with support exactly $X\setminus U$.
\end{corollary*}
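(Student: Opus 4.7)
The plan is to reduce the corollary directly to Theorem~\ref{thm:main_globaltype} applied to the quotient stack $[X/G]$. The identification $\DCAT(\QCOH_G(X)) = \DQCOH([X/G])$ is the standard equivalence between $G$-equivariant quasi-coherent sheaves on $X$ and quasi-coherent sheaves on the quotient stack, so the task is to produce compact generators for $\DQCOH([X/G])$.

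To invoke Theorem~\ref{thm:main_globaltype} I must verify that $[X/G]$ is a quasi-compact $\mathbb{Q}$-stack of s-global type. Quasi-compactness is inherited from $X$ via the smooth atlas $X \to [X/G]$, and the $\mathbb{Q}$-stack condition is automatic because $k$ has characteristic zero. The substantive point is the s-global type condition, which is precisely the output of the paper's Proposition~\ref{P:sglobal_sumi}: Sumihiro's theorem~\cite{MR0337963} delivers the normal case (a), and Brion's extension~\cite{MR3329192} delivers the semi-normal quasi-projective case (b). In both cases one obtains, for $G$ connected, a $G$-equivariant cover of $X$ by quasi-projective opens $U_i$ carrying $G$-linearizable ample invertibles, so each $U_i$ embeds $G$-equivariantly in some $\mathbb{P}(V_i)$ on which $G$ acts through $G \to \GL(V_i)$, and a standard frame-bundle/affine-cone construction exhibits $[U_i/G]$ as a quotient of a quasi-affine scheme by $\GL_N$.

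A small additional step is reducing from a possibly disconnected $G$ to the identity component $G^0$ required by Sumihiro: in characteristic zero the finite quotient $G/G^0$ is étale, so $[X/G^0] \to [X/G]$ is finite étale and surjective; the s-global type property descends along such covers, being étale-local on the base by definition.

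I expect no significant obstacle: the only genuine geometric input is Sumihiro/Brion, which is already encapsulated in the forthcoming Proposition~\ref{P:sglobal_sumi}, and given this the corollary is an immediate consequence of Theorem~\ref{thm:main_globaltype}. The only place where the two hypotheses (a) and (b) enter at all is in selecting which of Sumihiro or Brion to quote; the rest of the argument is identical.
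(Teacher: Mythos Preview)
Your approach is the paper's: apply Theorem~\ref{thm:main_globaltype} once Proposition~\ref{P:sglobal_sumi} confirms that $[X/G]$ is of s-global type. The one point you gloss over is that the equality $\DCAT(\QCOH_G(X))=\DQCOH([X/G])$ is not merely the abelian-level equivalence $\QCOH_G(X)\simeq\QCOH([X/G])$; it also needs the comparison $\DCAT(\QCOH([X/G]))\simeq\DQCOH([X/G])$, which can fail for general stacks and which the paper attributes to \cite{hallj_neeman_dary_no_compacts} (valid here because $[X/G]$ has affine diagonal). Your sketch of Proposition~\ref{P:sglobal_sumi} also omits the paper's initial passage to the algebraic closure before invoking Sumihiro/Brion, but that belongs to the proof of the proposition rather than of the corollary.
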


A key advantage of Theorem \ref{thm:main_globaltype} over previous results (e.g., 
\cite[Cor.~3.22]{MR2669705} and \cite[Prop.~2.2.4.13]{MR2717173}) is its applicability to a much wider class of stacks.
The main result of 
\cite{AHR_lunafield} implies that algebraic stacks of finite type over a field with affine diagonal and linearly reductive 
stabilizers at closed points (e.g., stacks with a good moduli space) are of s-global type~\cite[Thm.~2.25]{AHR_lunafield}.
In general, it is not possible to generate $\DQCOH(X)$ by a single
perfect complex (e.g., $X=B\Gm$). 

Theorems \ref{thm:main_qf} and \ref{thm:main_globaltype} are both
consequences of a general result that we now describe. Let $\beta$ be a cardinal. Let $X$ be an algebraic stack. We say that $X$ satisfies the \emph{$\beta$-Thomason condition} if: 
\begin{enumerate}
\item $\DQCOH(X)$ is compactly generated by a set of cardinality $\leq \beta$; and
\item for every quasi-compact open subset $U\subset X$,
there is a compact perfect complex supported on the complement.
\end{enumerate}
We say that $X$ satisfies the \fndefn{Thomason condition} if it satisfies the $\beta$-Thomason condition for some $\beta$.
Our main results are that a very large class of stacks satisfy the 
Thomason condition. In order to prove these results, however, we found it necessary to 
consider the following refinement of the Thomason condition. 

We say that $X$ is \emph{$\beta$-\crisp} if the $\beta$-Thomason condition is satisfied for every \'etale localization of $X$ (Definition~\ref{def:beta-perfect}). If $X$ is $\beta$-\crisp{} and $\beta$ is
finite, then $X$ is compactly generated by a single perfect
complex. Hence Theorems \ref{thm:main_qf} and
\ref{thm:main_globaltype} are implied by
\begin{maintheorem}\label{thm:main_list}
  Let $p\colon X'\to X$ be a morphism of quasi-compact and quasi-separated algebraic stacks that is representable, separated, quasi-finite, locally of finite presentation, and faithfully flat. If
  $X'$ is $\beta$-\crisp, then $X$ is $\beta$-\crisp.  
\end{maintheorem}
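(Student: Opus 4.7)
The plan is to verify the two clauses in the definition of $\beta$-\crispness{} for $X$: (i) $\DQCOH(X)$ is compactly generated by a set of perfect complexes of cardinality $\leq\beta$, and (ii) for every quasi-compact open $U\subseteq X$ there is a compact perfect complex in $\DQCOH(X)$ with support equal to $X\setminus U$. All the hypotheses on $p$ are stable under arbitrary base change, so once (i) and (ii) are proved on $X$ itself, applying the same argument after an étale base change $Y\to X$ takes care of the étale-local clause embedded in the definition of $\beta$-\crispness.

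The first input is a descent toolkit for $p$. Because $p$ is representable, quasi-compact, quasi-separated and flat, $\RDERF p_*$ commutes with arbitrary small coproducts, so $p^*\colon\DQCOH(X)\to\DQCOH(X')$ preserves compact objects; faithful flatness of $p$ makes $p^*$ conservative. Consequently a set $S$ of compact objects in $\DQCOH(X)$ generates $\DQCOH(X)$ if and only if $\{p^*G:G\in S\}$ generates $\DQCOH(X')$. Hence, provided we can manufacture compact perfect complexes on $X$ with arbitrary prescribed closed support, a Thomason-style stratification bootstrap assembles a generating set of size $\leq\beta$, deducing (i) from (ii).

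The heart of the proof is (ii). Given $Z=X\setminus U$, use $\beta$-\crispness{} of $X'$ to pick a compact perfect $P'\in\DQCOH(X')$ with $\supp P'=p^{-1}(Z)$. Since $p$ is representable, separated and quasi-finite, Zariski's main theorem factors it as $X'\xrightarrow{j}\bar X\xrightarrow{q}X$ with $j$ a quasi-compact open immersion and $q$ finite. After removing from $\bar X$ those points of $q^{-1}(Z)$ that do not lie in $j(p^{-1}(Z))$, we may assume $\supp P'$ is closed in $\bar X$; then $j_!P'$ extends $P'$ by zero to a pseudo-coherent complex on $\bar X$ with unchanged support, and we set $P:=\RDERF q_*(j_!P')\in\DQCOH(X)$. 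Flat base change applied to the finite morphism $q$ along the flat morphism $p$ expresses $p^*P$ in terms of $P'$ and its pullback to the double fibre product $X'\times_X X'$; faithful flatness of $p$ then forces $\supp P=Z$ exactly.

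The principal obstacle is that the finite morphism $q$ produced by Zariski's main theorem is generically \emph{not} flat, so $\RDERF q_*$ has no automatic reason to send perfect complexes to perfect complexes or compact complexes to compact complexes in $\DQCOH(X)$. The way around this is to verify both properties of $P$ after pulling back along the faithfully flat $p$: the base-change description of $p^*P$ involves only $P'$ and its pullback to $X'\times_X X'$, and these are compact and perfect because $X'$ is $\beta$-\crisp{} and this class is stable under the quasi-finite flat base change $X'\times_X X'\to X'$. One then invokes the fpqc-descent statements for perfection and for compactness—the main technical backbone of the paper—to promote these pulled-back verifications to the required statements for $P$ itself on $X$. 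This final descent step is the hard part of the argument: it is where the full strength of the descent apparatus developed earlier is used and where faithful flatness of $p$, rather than a merely Zariski-local property, becomes essential.
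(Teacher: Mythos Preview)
Your approach is fundamentally different from the paper's and contains a genuine gap.

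The paper does \emph{not} prove an fpqc-descent statement for compactness; no such statement appears anywhere in the text, and in fact it is false. Consider $p\colon \spec k \to B\Ga$ over a field $k$ of characteristic $p>0$: this is faithfully flat, $\Orb_{\spec k}$ is compact, yet $\Orb_{B\Ga}$ is perfect but not compact (this is precisely the phenomenon discussed in the paper's introduction and in Remark~\ref{rem:fin_coh_dim}). So the final step of your argument---promoting compactness of $p^*P$ to compactness of $P$---cannot work in general. The paper explicitly warns that ``compact generation is not fppf local for algebraic stacks'' and that the quasi-finite flat topology is the correct replacement.

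There are secondary problems as well. The extension-by-zero $j_!P'$ does not live in $\DQCOH(\bar X)$: for an open immersion $j$, the functor $j_!$ on $\Orb$-modules does not preserve quasi-coherence. Even if you replace $j_!P'$ by a Thomason-type extension (which requires knowing compact generation with supports on $\bar X$, circularly), the finite morphism $q$ produced by Zariski's Main Theorem is typically not flat, so $\RDERF q_*$ has no reason to preserve perfection or compactness, and your base-change computation of $p^*P$ does not go through.

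What the paper actually does is apply Theorem~\ref{thm:general_qffdesc_cpt_gen}, whose engine is the second author's quasi-finite flat d\'evissage \cite[Thm.~6.1]{MR2774654}. This reduces the problem to two special cases: (D2) $p$ finite, faithfully flat, and of finite presentation, where $f_*$ \emph{directly} preserves compacts because its right adjoint $f^\times$ preserves coproducts and is conservative (Corollary~\ref{C:fin_duality}, Proposition~\ref{prop:adm_finite_dev}); and (D3) $p$ an \'etale neighbourhood, handled by Mayer--Vietoris squares and Thomason localization (Proposition~\ref{prop:nis_square}). The induction is organized so that one never needs to push forward along a non-flat morphism or descend compactness along a general flat cover.
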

Theorem \ref{thm:main_list} is proved using the technique of quasi-finite 
flat d\'evissage for algebraic stacks---due to the second author
\cite{MR2774654}---together with some descent results for compact generation. In
sections~\ref{sec:presheaves-tri-cats}--\ref{sec:descent-compact-gen} these
descent results are stated in great generality---for presheaves of triangulated
categories---without requiring monoidal or linear structures. This allows us to establish 
compact generation in other contexts (see below, 
Theorem~\ref{thm:general_qffdesc_cpt_gen}, and \S\ref{sec:applications}). Along the way, we will review and develop foundational material for unbounded derived categories on algebraic stacks. 

We also wish to point out that for schemes the fppf and quasi-finite flat topologies coincide, but for 
algebraic stacks they differ. Moreover, compact generation is not fppf local for algebraic 
stacks. Indeed, $\DQCOH(B_k\Ga)$ when $k$ is of characteristic $p>0$ has no compact objects besides $0$; thus, it is not compactly generated---even though it is so fppf-locally \cite[Prop.~3.1]{hallj_neeman_dary_no_compacts}. In particular Theorem \ref{thm:main_list} (as well as its generalizations to other 
contexts in this article), which is about quasi-finite local compact generation, can be 
viewed as the correct generalization of fppf local compact generation results to algebraic 
stacks.

\subsection*{Azumaya algebras and the cohomological Brauer group}
Our work is strongly influenced by To\"en's excellent paper~\cite{MR2957304} on
derived Azumaya algebras and generators of twisted derived categories.
In~\cite{MR2957304}, To\"en shows that compact generation of certain
linear categories on derived schemes is an fppf-local question. The
salient example is the \emph{derived category of twisted sheaves}
$\DCAT(\QCOH^\alpha(X))$, where the twisting is given by a Brauer class $\alpha$ of
$H^2(X,\mathbb{G}_m)$. A compact generator of $\DCAT(\QCOH^\alpha(X))$ gives rise to
a \emph{derived Azumaya algebra}---the endomorphism algebra of the
generator~\cite[Prop.~4.6]{MR2957304}. More classically, a twisted vector bundle
that is generating gives rise to an Azumaya algebra $\mathcal{A}$.

The Brauer group $\Br(X)$ classifies Azumaya algebras $\mathcal{A}$ up to
Morita equivalence, that is, up to equivalence of the category of modules
$\MOD(\mathcal{A})$. Moreover, $\MOD(\mathcal{A})\homotopic \QCOH^\alpha(X)$
for a unique element $\alpha$ in the cohomological Brauer group
$\Br'(X):=H^2(X,\mathbb{G}_m)_{\mathrm{tors}}$.
Existence of twisted vector bundles thus answers the question whether
$\Br(X)\to \Br'(X)$ is surjective. 

Constructing twisted vector bundles, or equivalently Azumaya algebras, is difficult. Indeed, the question is not local as
vector bundles rarely extend over open immersions.
When $X$ is affine, or the separated union of two affines, Gabber proved in his thesis that $\Br(X)=\Br'(X)$~\cite{MR611868},
or equivalently, that $\DCAT(\QCOH^\alpha(X))$ is compactly generated by a twisted vector bundle~\cite[Thm.~2.2.3.3]{MR2717173}.
The 
state of the art is also due to Gabber: twisted vector bundles exist if $X$ is quasi-projective \cite{deJong_result-of-Gabber}.

Compact objects of the
derived category are typically easier to construct as we may extend them over open immersions using
Thomason's localization theorem (Corollary \ref{cor:thomason_lift}).
With this technique, M.\ Lieblich proved that $\DCAT(\QCOH^\alpha(X))$ is
compactly
generated when $X$ is any quasi-compact and quasi-separated
scheme~\cite[Cor.~2.2.4.14]{MR2717173}. Lieblich has also
studied twisted vector bundles in great detail and obtained a number of
arithmetic applications.

In 
\S\ref{sec:applications}, we prove that compact generation is quasi-finite flat local for twisted derived categories. In
particular, we prove that on a quasi-compact algebraic stack with quasi-finite
and separated diagonal every twisted derived category has a compact generator
(Example~\ref{ex:Br=Br'}). We thus establish a derived analogue of
$\Br(X)=\Br'(X)$ for such stacks, extending the results of To\"en \cite{MR2957304} and Antieau--Gepner \cite{MR3190610}.
\subsection*{Sheaves of linear categories on derived Deligne--Mumford stacks}
Although we work with non-derived schemes and stacks, our methods are strong
enough to deduce similar
results for derived (and spectral) Deligne--Mumford
stacks (Example~\ref{ex:derived}). Indeed, if $X$ is a derived
Deligne--Mumford stack, then the small \'etale topos of $X$ is equivalent to
the small \'etale topos of the non-derived $0$-truncation $\pi_0 X$. Thus,
(local) compact generation of a presheaf of triangulated categories on $X$ can
be studied on $\pi_0 X$. 

Sometimes results for stacks can be deduced from schemes using a similar
approach: if $\pi\colon X\to X_\cms$ is a coarse moduli space, then a presheaf
$\mathcal{T}$ of triangulated categories on $X$ induces a presheaf
$\pi_*\mathcal{T}$ of triangulated categories on $X_\cms$. If
$\pi_*\mathcal{T}$ is locally compactly generated, then it is enough to show
that compact generation is local on $X_\cms$
to deduce compact generation of $\mathcal{T}(X)$. This is how To\"en extends
his result to Deligne--Mumford stacks admitting a coarse moduli
scheme~\cite[Cor.~5.2]{MR2957304}.

\subsection*{Perfect and compact objects}
As we already have mentioned, some care has to be taken since perfect objects
are not necessarily compact. The perfect objects are the \emph{locally compact}
objects or, equivalently, the dualizable objects.
If $X$ is a quasi-compact and quasi-separated
algebraic stack, then the following conditions are equivalent (Remark~\ref{rem:fin_coh_dim}):
\begin{itemize}
\item every perfect object of $\DQCOH(X)$ is compact;
\item the structure sheaf $\Orb_X$ is compact;
\item there exists an integer $d_0$ such that for all quasi-coherent
  sheaves $M$ on $X$, the cohomology groups $H^{d}(X,M)$ vanish for
  all $d>d_0$; and
\item the derived global section functor $\RDERF\Gamma\colon \DQCOH(X)\to
  \DCAT(\AB)$ commutes with small coproducts.
\end{itemize}
We say that a stack is \emph{concentrated} when it satisfies the
conditions above.

In \cite[Thm.~B]{hallj_dary_alg_groups_classifying} we give a complete list of the group schemes $G/k$
such that $BG$ is concentrated: every linear group and certain non-affine
groups in characteristic zero but only the linearly reductive groups in
positive characteristic. Note that
\cite[Thm.~A]{hallj_dary_alg_groups_classifying} also gives
many examples of classifying stacks that are not concentrated, yet compactly
generated.

Drinfeld and Gaitsgory have proved that noetherian algebraic stacks with affine
stabilizer groups in characteristic zero are
concentrated~\cite[Thm.~1.4.2]{MR3037900}. This is generalized in
\cite[Thm.~C]{hallj_dary_alg_groups_classifying} to positive characteristic.
In particular, a stack with finite stabilizers is concentrated if and only if
it is tame.

\subsection*{Perfect stacks}
Ben--Zvi, Francis and Nadler introduced the notion of a \emph{perfect}
(derived) stack in~\cite{MR2669705}. In our context, an algebraic stack $X$
is perfect if and only if it has affine diagonal, it is concentrated and
its derived category $\DQCOH(X)$ is compactly generated~\cite[Prop.~3.9]{MR2669705}. A
direct consequence of our main theorems
and~\cite[Thm.~C]{hallj_dary_alg_groups_classifying} is that the
following classes of algebraic stacks are perfect:
\begin{enumerate}
\item quasi-compact tame Deligne--Mumford stacks with affine diagonal; and
\item $\Q$-stacks of s-global type with affine diagonal.
\end{enumerate}
%
The affine diagonal assumption is needed only because it is required in
the definition of a perfect stack. It is useful though: if $X$ is perfect, then
$\DCAT(\QCOH(X))=\DQCOH(X)$ by~\cite{hallj_neeman_dary_no_compacts}.

In the terminology of Lurie~\cite[Def.~8.14]{dag11}, an algebraic stack is
perfect if it has quasi-affine diagonal, is concentrated and $\DQCOH(X)$ is
compactly generated. Thus in Lurie's terminology, we have shown that
\begin{enumerate}
\item quasi-compact tame Deligne--Mumford stacks with quasi-compact and
separated diagonals; and
\item $\Q$-stacks of s-global type
\end{enumerate}
are perfect.
\subsection*{Coherence}
Compact generation is extremely useful and we will illustrate this with a
simple application---also the origin of this paper. Let $A$ be a commutative
ring and $\MOD(A)$ the category of $A$-modules. A functor
$F\colon \MOD(A)\to \AB$ is \emph{coherent} if there exists a
homomorphism of $A$-modules $M_1 \to M_2$ together with isomorphisms
\[
F(N) \cong \coker\bigl(\Hom_A(M_2,N) \to \Hom_A(M_1,N)\bigr)
\]
natural in $N$.
This definition is due to Auslander \cite{MR0212070} who initiated the
study of coherent functors. Hartshorne studied in detail
\cite{MR1656482} coherent functors when $A$ is noetherian and $M_1$
and $M_2$ are coherent and obtained some very nice applications to
classical algebraic geometry. For background material on coherent
functors, we refer the reader to Hartshorne's article. Recently, the
first author has used coherent functors to prove Cohomology and Base
Change for algebraic stacks \cite{hallj_coho_bc} and to give a
new criterion for algebraicity of a stack
\cite{MR3589351}. 

Using the compact generation results of
this article, we can give a straightforward proof of the following
Theorem (combine Theorem \ref{thm:main_qf} with Corollary
\ref{cor:coherent_functor}). 
\begin{maintheorem}\label{mainthm:coherent_functor}
  Let $A$ be a noetherian ring and let $\pi \colon X \to \spec A$ be a
  proper morphism of algebraic stacks with finite diagonal. If $\cplx{F} \in \DQCOH(X)$
  and $\cplx{G} \in \DCAT_{\COH}^b(X)$, then the functor
  \[
  \Hom_{\Orb_X}(\cplx{F},\cplx{G} \tensor_{\Orb_X}^{\LDERF} \LDERF
  \QCPBK{\pi}(-)) \colon \MOD(A) \to \MOD(A)
  \]
  is coherent.
\end{maintheorem}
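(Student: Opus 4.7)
The plan is to verify the hypotheses of Theorem \ref{thm:main_qf} on $X$ and then invoke Corollary \ref{cor:coherent_functor}, as the theorem's own formulation suggests.

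First, I would observe that $X$ is a quasi-compact algebraic stack with quasi-finite and separated diagonal: quasi-compactness follows from the properness of $\pi$ together with the affineness of $\spec A$, while finiteness of the diagonal is hypothesized and implies quasi-finiteness and separatedness. Theorem \ref{thm:main_qf} then yields that $\DQCOH(X)$ is compactly generated by a single perfect complex, and that every quasi-compact open of $X$ is the complement of the support of some compact perfect complex. This is the geometric input.

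If I had to reprove Corollary \ref{cor:coherent_functor} from scratch, I would split into two steps. \textbf{Perfect case.} For $\cplx{F} \in \DQCOH(X)$ perfect, the projection formula and the interchange of $\SRHOM$ with tensor (valid against a perfect first argument) give
\[
\Hom_{\Orb_X}\bigl(\cplx{F},\, \cplx{G}\tensor^{\LDERF}\LDERF\QCPBK{\pi}N\bigr)
\;\cong\;
H^{0}\bigl(\RDERF\pi_{*}\SRHOM(\cplx{F},\cplx{G})\,\tensor_{A}^{\LDERF} N\bigr).
\]
Since $\cplx{F}$ is perfect and $\cplx{G}\in \DCAT_{\COH}^{b}(X)$, the complex $\SRHOM(\cplx{F},\cplx{G})$ is in $\DCAT_{\COH}^{b}(X)$; properness of $\pi$ and noetherianness of $A$ then place $\cplx{H}:=\RDERF\pi_{*}\SRHOM(\cplx{F},\cplx{G})$ in $\DCAT_{\COH}^{b}(A)$. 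A finite free presentation of $\cplx{H}$ in degrees $0$ and $-1$ exhibits $N \mapsto H^{0}(\cplx{H}\tensor_{A}^{\LDERF} N)$ as $\coker\bigl(\Hom_{A}(A^{r_{0}},N)\to \Hom_{A}(A^{r_{-1}},N)\bigr)$, which is coherent in the sense of Auslander. \textbf{General case.} For arbitrary $\cplx{F}\in \DQCOH(X)$, compact generation by a perfect complex provided by Theorem \ref{thm:main_qf} allows one to write $\cplx{F}\homotopic \hocolim{n} \cplx{F}_{n}$ with each $\cplx{F}_{n}$ perfect, built iteratively via cones. The functor under consideration is then the termwise limit of the coherent functors attached to the $\cplx{F}_{n}$'s.

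I expect step (2) to be the main obstacle: coherent functors are not stable under arbitrary limits, so one cannot naively pass to the homotopy colimit. The remedy is to use the intrinsic characterization of coherent functors via half-exactness, commutation with products, and a finiteness/Mittag--Leffler type hypothesis on the tower $\{\Hom(\cplx{F}_{n}, \cplx{G}\tensor^{\LDERF}\LDERF\QCPBK{\pi}(-))\}$, as developed in the first author's work on coherent functors (\cite{hallj_coho_bc, hallj_openness_coh}). Properness of $\pi$ combined with boundedness of $\cplx{G}$ provides the uniform control of cohomological amplitude needed to eliminate the $\lim^{1}$ term and exhibit the resulting inverse limit itself as the cokernel of a single map between representable functors on $\MOD(A)$, thereby establishing coherence for general $\cplx{F}$.
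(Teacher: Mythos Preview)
Your Step~1 has a subtle gap: on a stack with finite (but not necessarily linearly reductive) stabilizers, perfect complexes need not be compact, and $\pi$ need not be concentrated; thus neither the usual projection formula (Corollary~\ref{cor:proj_formula}) nor the boundedness of $\RDERF\pi_*\SRHOM(\cplx{F},\cplx{G})$ is available as you state it. The paper handles this by working with \emph{compact} $\cplx{Q}$ rather than merely perfect ones: the Strong Projection Formula (Proposition~\ref{prop:strong_proj_form}) requires only compactness of $\cplx{Q}$, not concentration of $\pi$, and boundedness of $\RHom_{\Orb_X}(\cplx{Q},\cplx{G})$ comes from Lemma~\ref{lem:char_compact_stack} (again compactness), with coherence from properness giving $\DCAT_{\COH}^+$ only.

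Your Step~2 is where the real divergence lies, and your proposed route via towers and Mittag--Leffler conditions is unnecessarily fragile. The paper's argument is much cleaner and you have overlooked the key fact that makes it work: coherent functors form an abelian subcategory of $A$-linear functors that is \emph{closed under arbitrary products} \cite[Ex.~3.9]{hallj_coho_bc}. Since $\cplx{F}\mapsto \Hom_{\Orb_X}(\cplx{F},\cplx{G}\tensor^{\LDERF}\LDERF\QCPBK{\pi}(-))$ is contravariant in $\cplx{F}$, the full subcategory $\mathscript{T}\subseteq \DQCOH(X)$ of objects giving a coherent functor is therefore closed under small \emph{coproducts}; it is also closed under shifts and cones (abelianness of coherent functors), hence is localizing. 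As it contains the compact generators supplied by Theorem~\ref{thm:main_qf}, Corollary~\ref{cor:thomason_gens} gives $\mathscript{T}=\DQCOH(X)$ immediately---no hocolim presentation, no $\varprojlim^1$ analysis, no intrinsic characterization needed.
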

Theorem \ref{mainthm:coherent_functor} generalizes a result of the
first author for algebraic spaces \cite[Thm.~E]{hallj_coho_bc}, which
was proved using a completely different argument. The first author has
also proved a non-noetherian and infinite stabilizer variant of
Theorem \ref{mainthm:coherent_functor} at the expense of assuming that
$\cplx{G}$ has flat cohomology sheaves over $S$ \cite[Thm.~C]{hallj_coho_bc}.

\subsection*{Related results}
The first proof that $\DQCOH(X)$ is compactly generated when $X$ is a
quasi-compact separated scheme appears to be due to
Neeman~\cite[Prop.~2.5]{MR1308405} although he attributes the ideas to
Thomason~\cite{MR1106918}. Bondal--Van den Bergh~\cite[Thm.~3.1.1]{MR1996800}
adapted the proof to deal with quasi-separated schemes and noted that there is
a single compact generator. Lipman and Neeman further refined the result by
giving an effective bound on the existence of maps from the compact
generator~\cite[Thm.~4.2]{MR2346195}. As noted by Ben-Zvi, Francis and Nadler,
the proof of Bondal and Van den Bergh readily extends to derived
schemes~\cite[Prop.~3.19]{MR2669705}.

In~\cite[Thm.~6.1]{dag11} and~\cite[Thm.~1.5.10]{dag12} Lurie proves that
compact generation is \'etale local on $\mathbb{E}_\infty$-algebras and on
spectral algebraic spaces for quasi-coherent stacks (sheaves of linear
$\infty$-categories). Lurie uses \emph{scallop decompositions}, which are a special type 
of \'etale neighborhoods (or Nisnevich squares). Unfortunately, scallop decompositions do not 
exist for algebraic stacks. This is what necessitates our stronger inductive 
assumption---$\beta$-\crispness---for our local-global principle, Theorem~\ref{thm:main_list}. 
Indeed, to apply Thomason's localization theorem it is necessary to establish the existence of compact objects with prescribed support. 
On affine schemes (which appear in the scallop decompositions) this is
done using Koszul
complexes, cf.\ B\"okstedt and Neeman~\cite[Prop.~6.1]{MR1214458}. This is the basis
for our induction and also used in all previous proofs, e.g.,
To\"en~\cite[Lem.~4.10]{MR2957304} and~\cite[Prop.~6.9]{MR3190610}.

Drinfeld and Gaitsgory~\cite[Thm.~8.1.1]{MR3037900} prove that on an algebraic
stack of finite type over a field of characteristic zero with affine
stabilizers, the derived category of $D$-modules is compactly generated. They
remark that compact generation of $\DQCOH(X)$ is much
subtler and open in general~\cite[0.3.3]{MR3037900}.

Antieau~\cite{MR3161100} has considered local-global results for the
telescope conjecture. Some of these are generalized in~\cite{telescope-stacks}.

Krishna \cite{MR2570954} has considered the K-theory and G-theory for
tame Deligne--Mumford stacks with the resolution property admitting
projective coarse moduli schemes (i.e., projective stacks).

\subsection*{Future extensions}
In~\cite[Thm.~4.1]{MR2346195}, Lipman and Neeman prove that pseudo-coherent
complexes can be approximated arbitrarily well by perfect complexes on a
quasi-compact
and quasi-separated scheme. Local approximability by perfect complexes
is essentially the definition of pseudo-coherence so this is a local-global
result in the style of Theorem~\ref{thm:main_list}. This result has been
extended to algebraic spaces in~\cite[\spref{08HH}]{stacks-project} and
we expect that it can be extended to stacks with quasi-finite diagonal
using the methods of this paper amplified with $t$-structures. Similarly,
we expect that there is an effective bound on the compact generator
in Theorem~\ref{thm:main_qf} as in~\cite[Thm.~4.2]{MR2346195}.

\subsection*{Contents of this paper}
In \S\S 1--2 we recall and develop some generalities on unbounded
derived categories of quasi-coherent sheaves on stacks and
concentrated morphisms---working in the unbounded derived category is
absolutely essential for this range of mathematics. Unfortunately
some important foundational notions, such as concentrated morphisms,
had not been considered in the literature before.

In \S 3, we recall the concept of compact objects and
Thomason's localization theorem for triangulated categories.

In \S 4, we address fundamental results on perfect and compact objects in the derived 
categories of quasi-coherent sheaves on algebraic stacks. Using this we 
establish a general projection formula for stacks, tor-independent base
change, and finite flat duality. We also prove Theorem \ref{mainthm:coherent_functor} assuming Theorem \ref{thm:main_qf}.

In \S\S 5--6 we introduce presheaves of triangulated categories and
 Mayer--Vietoris triangles. We also prove our main result on descent
 of compact generation (Theorem \ref{thm:general_qffdesc_cpt_gen}).

In \S 7 we introduce the $\beta$-resolution property, which gives a
convenient method to keep track of the number of vector bundles needed for
generating the derived category of a stack with the resolution
property. 

In \S 8, we introduce compact generation with supports and
$\beta$-\crispness{} and relate these to Koszul complexes.

In \S 9, we prove the main theorems.

\subsection*{Acknowledgments}
It is our pleasure to acknowledge useful discussions with B.\ Bhatt and
A.\ Neeman. We would also like to thank D.\ Bergh, M.\ Hoyois, M.\ Lieblich and O.\ Schn\"urer for a
number of useful 
comments. We also wish to thank the referee for their careful reading and several suggested improvements to the article.

\subsection*{Notations and assumptions}
For an abelian category $\mathscript{A}$, denote by $\DCAT(\mathscript{A})$
its unbounded derived category. For a
complex $M \in \DCAT(\mathscript{A})$, denote its $i$th
cohomology group by $\COHO{i}(M)$. 
For a sheaf of rings $A$ on a topos 
$E$, denote by $\MOD(A)$ (resp.\ $\QCOH(A)$) the category of
$A$-modules (resp.\ the category of quasi-coherent $A$-modules). If the
sheaf of rings $A$ on the topos $E$ is implicit, it
will be convenient to denote $\MOD(A)$ as $\MOD(E)$ and
$\DCAT(\MOD(A))$ as $\DCAT(E)$. 

For algebraic stacks, we adopt the conventions of the \emph{Stacks
  Project} \cite{stacks-project}. This means that algebraic stacks are
stacks over the big fppf site of some scheme, admitting a smooth,
representable, surjective morphism from a scheme (note that there are no
separation hypotheses here). A morphism of algebraic stacks is
\fndefn{quasi-separated} if its diagonal and double diagonal are
represented by quasi-compact morphisms of algebraic spaces.

For a scheme $X$ denote its underlying topological space by
$|X|$. For a scheme $X$ and a point $x\in |X|$, denote by $\kappa(x)$
the residue field at $x$.  

Let $f \colon X \to Y$ be a $1$-morphism of algebraic stacks. Then for any
other $1$-morphism of algebraic stacks $g \colon Z \to Y$, we denote by
$f_Z \colon X_Z \to Z$ the pullback of $f$ by $g$. 
\section{Quasi-coherent sheaves on algebraic stacks}\label{sec:qc_shv}  
In this section we review derived categories of quasi-coherent sheaves
on algebraic stacks. For generalities on unbounded derived categories
on ringed topoi we refer the reader to \cite[\S18.6]{MR2182076}. In
\cite[\S18.6]{MR2182076}, a morphism of ringed topoi is assumed to
have a left exact inverse image---we will not make this assumption,
but instead indicate explicitly when it does and does not hold.

Let $X$ be an algebraic stack. Let $\MOD(X)$
(resp.~$\QCOH(X)$) 
denote the abelian category of $\Orb_X$-modules (resp.~quasi-coherent
$\Orb_X$-modules) on the lisse-\'etale topos of $X$
\cite[12.1]{MR1771927}. Let $\DCAT(X)$ (resp.~$\DQCOH(X)$)  denote
the unbounded derived category of $\MOD(X)$ (resp.~the full
subcategory of $\DCAT(X)$ with cohomology in $\QCOH(X)$). Superscripts
such as $+$, $-$, $\geq n$, and $b$ decorating $\DCAT(X)$ and
$\DQCOH(X)$ are to be interpreted as usual.

If $X$ is a Deligne--Mumford stack (e.g., a scheme or an algebraic
space), then there is an associated small \'etale topos which we denote
as $X_{\et}$. There is a natural morphism of ringed topoi
$\REST[X]\colon X_{\lisset} \to X_{\et}$. Let $\MOD(X_{\et})$
(resp.~$\QCOH(X_{\et})$) denote the abelian category of
$\Orb_{X_{\et}}$-modules (resp.~quasi-coherent
$\Orb_{X_{\et}}$-modules). The restriction of $(\REST[X])_*\colon
\MOD(X) \to \MOD(X_{\et})$ to $\QCOH(X)$ is fully faithful with
essential image $\QCOH(X_{\et})$~\cite[Prop.~13.2.3]{MR1771927}.
Let $\DQCOH(X_{\et})$ denote the
triangulated category $\DCAT_{\QCOH(X_{\et})}(\MOD(X_{\et}))$. Then
the natural functor $\RDERF(\REST[X])_*\colon \DQCOH(X) \to
\DQCOH(X_{\et})$ is an equivalence of categories~\cite[Prop.~12.10.1]{MR1771927}. If $X$ is a scheme,
then the corresponding statement for the Zariski topos also holds~\cite[Lem.13.1.5]{MR1771927}.

\subsection{Hypercoverings and simplicial sites}
We now recall the relationship between the unbounded derived
categories of quasi-coherent sheaves on an algebraic stack and those
on a smooth hypercovering (i.e., cohomological descent). Our approach follows
\cite{MR2312554} and  \cite{MR2434692}. Let $X$ be an algebraic stack
and let $p_\bullet \colon U_\bullet \to X$ be a smooth hypercovering
by algebraic spaces. Typically, we will take $U_{\bullet}$ to be the $0$-coskeleton 
associated to a smooth covering $p_0 \colon U_0 \to X$, where $U_0$ is an algebraic 
space. In plainer language, $U_n$ is the $(n+1)$th fiber product of $U$ over $X$ and the 
simplicial structure (i.e., face and degeneracy maps) come from the various projections 
and diagonals between the $U_n$ as $n$ varies. 

The simplicial algebraic space $U_\bullet$ gives rise to two semi-simplicial topoi: 
$U_{\bullet,\lisset}^+$ and $U_{\bullet,\et}^+$. The semi-simplicial topos $U_{\bullet,\et}^+$ is 
formed as follows: for each integer $n\geq 0$ there is the \'etale topos $U_{n,\et}$ and for 
each injective map $\delta\colon [n]=\{0,\dots,n\} \to [m]=\{0,\dots,m\}$ there is a 
morphism of topoi $\delta \colon U_{m,\et} \to U_{n,\et}$. A sheaf $F_\bullet$ on 
$U_{\bullet,\et}^+$ is a sheaf $F_n$ on each $U_{n,\et}$ together with transition maps 
$\delta^{-1}F_n \to F_m$ for each injective map $\delta\colon [n] \to [m]$ that are 
compatible with composition; the sheaf $F_\bullet$ is \emph{cartesian} if the transition 
maps are always isomorphisms.

The topos $U_{\bullet,\et}^+$ is naturally ringed by the flat sheaf
$\Orb_{U_{\bullet,\et}}^+$. Here flat means that the transition maps
$\delta^{-1}\Orb_{U_n,\et}\to\Orb_{U_m,\et}$ are flat. Let
$\MOD(U_{\bullet,\et}^+)$ denote the associated category of modules and
$\MOD_{\cartsubscript}(U_{\bullet,\et}^+)$ the subcategory of cartesian
sheaves. Here cartesian means that the transition maps
$\delta^*F_n=\delta^{-1}F_n\otimes_{\delta^{-1}\Orb_{U_{n,\et}}} \Orb_{U_{m,\et}}\to F_m$
are isomorphisms.

An $\Orb_{U_{\bullet,\et}}^+$-module is \emph{quasi-coherent} if it is cartesian and its restriction to each $U_{n,\et}$ is quasi-coherent. Let $\QCOH(U_{\bullet,\et}^+)$ denote the category of quasi-coherent sheaves. Let $\DCAT(U_{\bullet,\et}^+)$ be the unbounded derived category of 
$\MOD(U_{\bullet,\et}^+)$, let $\DCART(U_{\bullet,\et}^+)$ denote the subcategory whose 
objects have cartesian cohomology sheaves and let $\DQCOH(U_{\bullet,\et}^+)$ denote the subcategory with quasi-coherent cohomology sheaves. The semi-simplicial topos 
$U_{\bullet,\lisset}^+$ and its various module categories are defined similarly.

Thus, there are
natural morphisms of ringed topoi:
\begin{equation}
X_{\lisset} \xleftarrow{p_{\bullet,\lisset}^+} U_{\bullet,\lisset}^+
\xrightarrow{\REST[{U_\bullet}]} U_{\bullet,\et}^+.\label{eq:augmentation}
\end{equation}
One way phrasing smooth descent of quasi-coherent sheaves is that these morphisms of topoi induce equivalences of abelian categories:
\[
\QCOH(X) \xleftarrow{(p_{\bullet,\lisset}^+)_*}
\QCOH(U_{\bullet,\lisset}^+) \xrightarrow{
  (\REST[U_{\bullet}])_*} \QCOH(U_{\bullet,\et}^+)
\]
In \cite[Ex.~2.2.5]{MR2434692}, it is shown that this can be improved to \emph{unbounded cohomological descent}, that is, these morphism of topoi induce equivalences of triangulated categories:
\begin{equation}
\DQCOH(X) \xleftarrow{\RDERF (p_{\bullet,\lisset}^+)_*}
\DQCOH(U_{\bullet,\lisset}^+) \xrightarrow{\RDERF
  (\REST[U_{\bullet}])_*} \DQCOH(U_{\bullet,\et}^+).\label{eq:cohomological_descent}
\end{equation}
The morphisms $p_{\bullet,\lisset}^+$ and
$\REST[U_{\bullet}]$ have left exact inverse image functors.

\subsection{Operations in unbounded categories of modules}
We now record for future reference some useful
formulae. If $\cplx{M}$ and $\cplx{N} \in \DCAT(X)$,
then there is
\begin{align*}
\cplx{M} \tensor^{\LDERF}_{\Orb_X} \cplx{N} &\in \DCAT(X) \quad\text{(the derived tensor product)} \\
\SRHom_{\Orb_X}(\cplx{M},\cplx{N}) &\in \DCAT(X) \quad\text{(the derived sheaf
Hom functor)} \\
\RHom_{\Orb_X}(\cplx{M},\cplx{N}) &\in \DCAT(\AB) \quad\text{(the derived global Hom functor)}
\end{align*}
If in addition $\cplx{P}\in\DCAT(X)$, then we have a
functorial isomorphism:
\begin{equation}
  \Hom_{\Orb_X}(\cplx{M} \tensor^{\LDERF}_{\Orb_X} \cplx{N}, \cplx{P})
  \cong
  \Hom_{\Orb}(\cplx{M},\SRHom_{\Orb_X}(\cplx{N},\cplx{P})),\label{eq:ghomadj}  
\end{equation}
as well as a functorial quasi-isomorphism: 
\begin{equation}
  \SRHom_{\Orb_X}(\cplx{M} \tensor^{\LDERF}_{\Orb_X} \cplx{N},
  \cplx{P}) \homotopic
  \SRHom_{\Orb_X}(\cplx{M},\SRHom_{\Orb_X}(\cplx{N},\cplx{P}))\label{eq:lhomadj}. 
\end{equation}
Letting $\RDERF \Gamma(X,-)  = \RHom_{\Orb_X}(\Orb_X,-)$, there
is also a natural quasi-isomorphism:
\begin{equation}
  \RHom_{\Orb_X}(\cplx{M},\cplx{N}) \homotopic \RDERF \Gamma
  \SRHom_{\Orb_X}(\cplx{M},\cplx{N}). \label{eq:lghom} 
\end{equation}
If $\cplx{M}$ and $\cplx{N}$ belong to $\DQCOH(X)$,
then $\cplx{M}\tensor^{\LDERF}_{\Orb_X}\cplx{N} \in \DQCOH(X)$. These results are all
consequences of \cite[\S6]{MR2312554} and
\cite[\S\S2.1--2.2, Ex.~2.2.4]{MR2434692}. Since the category $\DQCOH(X)$ is well generated
\cite[Thm.~B.1]{hallj_neeman_dary_no_compacts} and the functor
$-\tensor^{\LDERF}_{\Orb_X}\cplx{M} \colon \DQCOH(X) \to \DQCOH(X)$
preserves small coproducts, it admits a right adjoint
\cite[Thm.~8.4.4]{MR1812507}
\[
\SRHom_{\Orb_X}^{\qcsubscript}(\cplx{M},-) \colon \DQCOH(X) \to \DQCOH(X).
\]
In fact, if
\[
\QCHR[X] \colon \DCAT(X) \to \DQCOH(X)
\]
is the right adjoint to the inclusion $\DQCOH(X) \subseteq \DCAT(X)$, which exists for the 
same reasons as above, then 
\[
\SRHom_{\Orb_X}^{\qcsubscript}(\cplx{M},-) \simeq \QCHR[X](\SRHom_{\Orb_X}(\cplx{M},-)).
\]
Note that while the formation of $\SRHom_{\Orb_X}(\cplx{M},-)$ is smooth local on $X$, 
this is not true in general for $\SRHom_{\Orb_X}^{\qcsubscript}(\cplx{M},-)$. It is true, 
however, if $\cplx{M}$ is perfect 
(Lemma~\ref{L:basic_props_perfect}\itemref{L:basic_props_perfect:dual}).

\subsection{Direct and inverse image}
For a morphism of algebraic stacks $f \colon X \to Y$, the induced
morphism of ringed topoi $f_{\lisset} \colon X_{\lisset} \to
Y_{\lisset}$ does not necessarily have a left exact inverse image functor \cite[5.3.12]{MR1963494}. Thus the construction of
the correct derived functors of $f^* \colon \QCOH(Y) \to \QCOH(X)$ is
somewhat subtle. There are currently two approaches to constructing
these functors. The first, due to M.~Olsson \cite{MR2312554} and
Y.~Laszlo and M.~Olsson \cite{MR2434692}, uses cohomological
descent. The other approach appears in the Stacks Project
\cite[Tag \spref{07BD}]{stacks-project}. In this article, 
we will employ the approach of Olsson and Laszlo--Olsson, which we
now briefly recall.

Let $f\colon X \to Y$ be a morphism of algebraic stacks. Let $q \colon
V \to Y$ be a smooth surjection from an algebraic space. Let $U \to
X\times_Y V$ be another smooth surjection from an algebraic space. Let
$\tilde{f}\colon U \to V$ be the resulting morphism of algebraic
spaces and let $p\colon U \to X$ be the resulting smooth covering. By
\eqref{eq:augmentation}, there is an induced $2$-commutative diagram
of ringed topoi:
\begin{equation}
  \xymatrix@C4pc{X \ar[d]_f & \ar[l]_{p_{\bullet,\lisset}^+}
    \ar[d]_{\tilde{f}_{\bullet,\lisset}^+} U_{\bullet,\lisset}^+
    \ar[r]^{\REST[{U_\bullet}]} & U_{\bullet,\et}^+
    \ar[d]^{\tilde{f}_{\bullet,\et}^+}\\
    Y & \ar[l]^{q_{\bullet,\lisset}^+} V_{\bullet,\lisset}^+
    \ar[r]_{\REST[{V_\bullet}]} & V_{\bullet,\et}^+. }
  \label{eq:simplicial_construction_of_derived_functors}
\end{equation}
The $2$-commutativity of the diagram above induces natural
transformations:
\begin{align}
  \label{eq:relationship_derived_pushforwards:mod_lisset}
  \RDERF \MODPSH{f} &\Rightarrow \RDERF (q_{\bullet,\lisset}^+)_*
  \RDERF (\tilde{f}_{\bullet,\lisset}^+)_* \LDERF
  (p_{\bullet,\lisset}^+)^* \quad \mbox{and}\\
  \RDERF (\tilde{f}_{\bullet,\et}^+)_* &\Rightarrow \RDERF(\REST[V_{\bullet}])_*\RDERF
  (\tilde{f}_{\bullet,\lisset}^+)_* \LDERF(\REST[U_{\bullet}])^*,
  \label{eq:relationship_derived_pushforwards:et_lisset}
\end{align}
which are natural isomorphisms for those complexes with quasi-coherent
cohomology that are sent to complexes with quasi-coherent cohomology
by $\RDERF(\tilde{f}_{\bullet,\lisset}^+)_*$ or $\RDERF(\tilde{f}_{\bullet,\et}^+)_*$. 

\begin{remark}
  Note, however, that if $f \colon X \to Y$ is not representable, then
  $\RDERF \MODPSH{f}$ does not, in general, send $\DQCOH(X)$ to
  $\DQCOH(Y)$---even if $f$ is proper and \'etale and $X$ and $Y$ are
  smooth Deligne--Mumford stacks
  \cite[\spref{07DC}]{stacks-project}. The problem is that
  quasi-compact and quasi-separated morphisms of algebraic stacks can
  have unbounded cohomological dimension, which is in contrast to the
  case of schemes and algebraic spaces
  \cite[\spref{073G}]{stacks-project}. In the next section we will
  clarify this with the concept of a \fndefn{concentrated} morphism.
\end{remark}

Another crucial observation here is that the morphism of topoi
$\tilde{f}^+_{\bullet,\et}$ has a left exact inverse image
functor. The general theory now gives rise to an unbounded derived
functor $\LDERF(\tilde{f}^+_{\bullet,\et})^* \colon
\DCAT(V_{\bullet,\et}^+) \to \DCAT(U_{\bullet,\et}^+)$, which is left
adjoint to $\RDERF (\tilde{f}^+_{\bullet,\et})_* \colon
\DCAT(U_{\bullet,\et}^+) \to \DCAT(V_{\bullet,\et}^+)$. The functor
$\LDERF(\tilde{f}^+_{\bullet,\et})^*$ is easily verified to preserve
small coproducts and complexes with quasi-coherent
cohomology. Using the equivalences of
\eqref{eq:cohomological_descent}, we may now define a functor $\LDERF
\QCPBK{f} \colon\DQCOH(Y) \to \DQCOH(X)$ such that $\COHO{0}(\LDERF
\QCPBK{f} \shv{M}[0]) \cong f^*\shv{M}$, whenever $\shv{M} \in
\QCOH(Y)$. If $f \colon X \to Y$ is flat, then for all integers $q$
and all $\cplx{M} \in \DQCOH(X)$ there is a natural isomorphism:
\begin{equation}
  f^*\COHO{q}(\cplx{M}) \cong \COHO{q}(\LDERF\QCPBK{f}\cplx{M}) \label{eq:fl_pbk}
\end{equation}
Since the category $\DQCOH(Y)$ is well generated
\cite[Thm.~B.1]{hallj_neeman_dary_no_compacts} and the functor $\LDERF
\QCPBK{f}$ preserves small coproducts, it admits a right adjoint
\cite[Thm.~8.4.4]{MR1812507} 
\[
\RDERF \QCPSH{f} \colon \DQCOH(X) \to \DQCOH(Y). 
\]
The functor above is closely related to the functors we have already
seen. Indeed, since $\LDERF \QCPBK{f}\Orb_Y[0] \homotopic \Orb_X[0]$,
it follows that if $\cplx{M} \in \DQCOH(X)$, then
\begin{equation}
  \RDERF \Gamma(Y,\RDERF\QCPSH{f}\cplx{M}) \homotopic \RDERF \Gamma(X,\cplx{M}).\label{eq:global_sections_qcpsh}
\end{equation}
We now describe $\RDERF \QCPSH{f}$ locally. Let 
\[
\QCHR[V_{\bullet,\et}^+] \colon \DCAT(V_{\bullet,\et}^+) \to \DQCOH(V_{\bullet,\et}^+)
\]
be a right adjoint to the natural inclusion functor
$\DQCOH(V_{\bullet,\et}^+) \to \DCAT(V_{\bullet,\et}^+)$, which exists
by \cite[Thm.~8.4.4]{MR1812507}. A straightforward calculation,
utilizing the equivalences \eqref{eq:cohomological_descent}, induces
a natural isomorphism of functors:
\begin{equation}
\RDERF \QCPSH{f} \homotopic \RDERF (q_{\bullet,\lisset}^+)_*\LDERF
(\REST[V_{\bullet}])^*\QCHR[V_{\bullet,\et}^+]\RDERF
(\tilde{f}_{\bullet,\et}^+)_* \RDERF(\REST[U_{\bullet}])_*\LDERF
(p_{\bullet,\lisset}^+)^*.\label{eq:two_pushforwards}
\end{equation}
The following Lemma clarifies the situation
somewhat.
\begin{lemma}\label{lem:qcqs_conditions}
  If $f\colon X \to Y$ is a morphism of algebraic stacks that is
  quasi-compact and quasi-separated, then the following hold. 
  \begin{enumerate}
  \item \label{item:qcqs_conditions:bdd_psh_cplx} The restriction of
    $\RDERF \MODPSH{f}$ to $\DQCOH^+(X)$ factors through
    $\DQCOH^+(Y)$.
  \item \label{item:qcqs_conditions:global_trivial_duality} The
    restrictions of $\RDERF \MODPSH{f}$ and $\RDERF \QCPSH{f}$ to
    $\DQCOH^+(X)$ are isomorphic.
  \item \label{item:qcqs_conditions:bdd_coprod} For each integer $d$,
    the restriction of the functor $\RDERF\QCPSH{f}$ to
    $\DQCOH^{[d,\infty)}(X)$ preserves direct limits (in particular,
    small coproducts).
  \item \label{item:qcqs_conditions:bdd_fl_bc} Consider a
    $2$-cartesian diagram of algebraic stacks:
    \[
    \xymatrix{X' \ar[r]^{g'} \ar[d]_{f'}& X \ar[d]^f \\ Y' \ar[r]^{g}
      & Y.} 
    \]
    If $g$ is flat, then the base change transformation
    \[
    \LDERF\QCPBK{g}\RDERF \QCPSH{f} \Rightarrow \RDERF\QCPSH{f'}
    \LDERF \QCPBK{(g')}
    \]
    is an isomorphism upon restriction to $\DQCOH^+(X)$.
  \end{enumerate}
\end{lemma}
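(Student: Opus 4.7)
The plan is to reduce each of the four statements to the corresponding (classical) assertion for a quasi-compact, quasi-separated morphism of algebraic spaces by exploiting the simplicial construction \eqref{eq:simplicial_construction_of_derived_functors}. Fix a smooth surjection $q\colon V\to Y$ from a scheme and a smooth surjection $p\colon U\to X\times_Y V$, yielding simplicial topoi and a morphism $\tilde{f}^+_{\bullet,\et}\colon U^+_{\bullet,\et}\to V^+_{\bullet,\et}$ whose levels $\tilde{f}_n\colon U_n\to V_n$ are quasi-compact, quasi-separated morphisms of algebraic spaces. Since every such $\tilde{f}_n$ has finite cohomological dimension on quasi-coherent sheaves \cite[\spref{073G}]{stacks-project}, the ``simplicial'' total pushforward $\RDERF(\tilde{f}^+_{\bullet,\et})_*$ is controlled on bounded-below complexes by a convergent spectral sequence whose columns are the $\RDERF(\tilde{f}_n)_*$.

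For \itemref{item:qcqs_conditions:bdd_psh_cplx}, take $\cplx{M}\in\DQCOH^+(X)$. By \eqref{eq:fl_pbk} the complex $\LDERF(p^+_{\bullet,\lisset})^*\cplx{M}$ lies in $\DQCOH^+(U^+_{\bullet,\lisset})$, and its transport to $U^+_{\bullet,\et}$ via $\LDERF(\REST[U_\bullet])^*$ remains bounded-below with quasi-coherent cohomology. The spectral sequence above then shows that $\RDERF(\tilde{f}^+_{\bullet,\et})_*$ preserves $\DQCOH^+$, and descent along $q$ through \eqref{eq:cohomological_descent} together with the fact that the natural transformation \eqref{eq:relationship_derived_pushforwards:mod_lisset} is an isomorphism precisely when its output has quasi-coherent cohomology yields $\RDERF\MODPSH{f}\cplx{M}\in\DQCOH^+(Y)$. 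Statement \itemref{item:qcqs_conditions:global_trivial_duality} is then immediate from \eqref{eq:two_pushforwards}: the coherator $\QCHR[V^+_{\bullet,\et}]$ acts as the identity on complexes that already have quasi-coherent cohomology, so on $\DQCOH^+(X)$ the functors $\RDERF\QCPSH{f}$ and $\RDERF\MODPSH{f}$ coincide.

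For \itemref{item:qcqs_conditions:bdd_coprod}, I would argue that every ingredient of \eqref{eq:two_pushforwards} either is a left adjoint (the pullbacks $\LDERF(p^+_{\bullet,\lisset})^*$ and $\LDERF(\REST[V_\bullet])^*$), or commutes with direct limits when restricted to a bounded-below band. The nontrivial input is $\RDERF(\tilde{f}^+_{\bullet,\et})_*$: on $\DCAT^{[d,\infty)}$-inputs, only finitely many simplicial degrees contribute to any fixed cohomological degree of the output (because each $\tilde{f}_n$ has finite cohomological dimension on quasi-coherent sheaves), and each degreewise $\RDERF(\tilde{f}_n)_*$ commutes with direct limits by the algebraic space case; these assemble to give the desired direct-limit preservation. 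Similarly, \itemref{item:qcqs_conditions:bdd_fl_bc} follows by applying \itemref{item:qcqs_conditions:global_trivial_duality} on both sides to replace $\RDERF\QCPSH{(-)}$ by $\RDERF\MODPSH{(-)}$ in the bounded-below range, choosing compatible smooth covers of $X$, $X'$, $Y$, $Y'$, and reducing flat base change to flat base change for each individual $\tilde{f}_n$, which is standard for quasi-compact, quasi-separated morphisms of algebraic spaces.

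The main technical obstacle is that quasi-compact, quasi-separated morphisms of algebraic stacks can have \emph{unbounded} cohomological dimension, so the simplicial spectral sequence fails to converge on unbounded complexes and the identity $\RDERF\MODPSH{f}\homotopic\RDERF\QCPSH{f}$ can break---this is precisely the phenomenon that the next section's notion of a \emph{concentrated} morphism will isolate. Restricting to $\DQCOH^+$ (or a bounded-below band in \itemref{item:qcqs_conditions:bdd_coprod}) is exactly what kills this obstruction and lets the reductions above go through.
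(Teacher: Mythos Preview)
Your argument is correct and in broad strokes matches the paper's. The paper is slightly more direct in two places. For \itemref{item:qcqs_conditions:bdd_psh_cplx} it simply cites \cite[Lem.~6.20]{MR2312554} rather than re-deriving it. For \itemref{item:qcqs_conditions:bdd_coprod} it bypasses the simplicial resolution entirely: it first uses \itemref{item:qcqs_conditions:global_trivial_duality} to swap $\RDERF\QCPSH{f}$ for $\RDERF\MODPSH{f}$, then applies the hypercohomology spectral sequence $\RDERF^{r}\MODPSH{f}\COHO{s}(\cplx{M})\Rightarrow\RDERF^{r+s}\MODPSH{f}\cplx{M}$ to reduce to the statement that each $\RDERF^r\MODPSH{f}\colon\QCOH(X)\to\QCOH(Y)$ preserves filtered colimits; after smooth-localizing $Y$ to an affine this is the standard fact that $H^r(X_{\lisset},-)$ preserves filtered colimits for $X$ quasi-compact and quasi-separated. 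Your simplicial route works just as well. One small quibble: in your argument for \itemref{item:qcqs_conditions:bdd_coprod}, the reason only finitely many simplicial degrees contribute to a fixed output degree is the bounded-below hypothesis (the relevant spectral sequence is first-quadrant), not the finite cohomological dimension of the individual $\tilde{f}_n$; the latter is what keeps you inside $\DQCOH^+$.
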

\begin{proof}
  Claim \itemref{item:qcqs_conditions:bdd_psh_cplx} is
  \cite[Lem.~6.20]{MR2312554}. Claim
  \itemref{item:qcqs_conditions:global_trivial_duality} follows from
  cohomological descent \eqref{eq:cohomological_descent},
  claim \itemref{item:qcqs_conditions:bdd_psh_cplx} and equations
  \eqref{eq:relationship_derived_pushforwards:mod_lisset},
  \eqref{eq:relationship_derived_pushforwards:et_lisset}, and
  \eqref{eq:two_pushforwards}. 

  For \itemref{item:qcqs_conditions:bdd_coprod}, by
  \itemref{item:qcqs_conditions:global_trivial_duality}, we may replace
  $\RDERF\QCPSH{f}$ by $\RDERF\MODPSH{f}$. The hypercohomology
  spectral sequence:
  \begin{equation}
    \RDERF^{r}\MODPSH{f}\COHO{s}(\cplx{M}) \Rightarrow
    \RDERF^{r+s}\MODPSH{f}\cplx{M}\label{eq:hyper_coho} 
  \end{equation}
  now applies and it is thus sufficient to prove that the higher
  pushforwards 
  \[
  \RDERF^r\MODPSH{f} \colon \QCOH(X) \to \QCOH(Y)
  \]
  preserve direct limits for every integer $r\geq 0$. This is local on
  $Y$ for the smooth topology, so we may assume that $Y$ is an affine
  scheme. Thus it suffices to prove that the cohomology functors
  $H^r(X_{\lisset},-) \colon \QCOH(X) \to \AB$ preserve direct limits
  for every integer $r\geq 0$. Since $X$ is quasi-compact and
  quasi-separated, this is well-known (e.g.,
  \cite[\spref{0739}]{stacks-project}). 

  The base change transformation of
  \itemref{item:qcqs_conditions:bdd_fl_bc} exists by functoriality of
  the adjoints. Applying \itemref{item:qcqs_conditions:global_trivial_duality}
  we may replace $\QCPSH{f}$ and $\QCPSH{f'}$ by $\MODPSH{f}$ and
  $\MODPSH{f'}$ respectively. The statement is now local on $Y$ and
  $Y'$ for the smooth topology, so we may assume that both $Y$ and
  $Y'$ are affine schemes. Small modifications to the argument of
  \cite[\spref{073K}]{stacks-project} complete the proof.
\end{proof}
In \S\ref{sec:conc_stacks} we describe a class of morphisms for which the conclusions of
Lemma \ref{lem:qcqs_conditions} remain valid
in the unbounded derived category.

\subsection{Comparison with definitions in derived algebraic geometry}
We will now compare the derived category of quasi-coherent
sheaves $\DQCOH(X)$ that we are using in
this paper with derived categories of
quasi-coherent sheaves as defined in derived algebraic geometry.
This is not used in the
remainder of the paper (but see Example~\ref{ex:derived}).

First recall that if $\mathcal{A}$ is an abelian category, then the derived
category $\DCAT(\mathcal{A})$ is the homotopy category of a natural stable
$\infty$-category
$\DCATinfty(\mathcal{A})$~\cite[Def.\ 1.3.5.8]{lurie_highalg}.  If in addition
$\mathcal{C}\subset\mathcal{A}$ is a weak Serre subcategory, then we can
consider the full $\infty$-subcategory $\DCATinfty_{\mathcal{C}}(\mathcal{A})$
of objects with cohomology in $\mathcal{C}$ and this has homotopy
category $\DCAT_{\mathcal{C}}(\mathcal{A})$. We thus define
$\DQCOHinfty(X):=\DCATinfty_{\QCOH(X)}(\MOD(X))$ which has homotopy category
$\DQCOH(X)$. If $X$ is Deligne--Mumford, we also define
$\DQCOHinfty(X_{\et}):=\DCATinfty_{\QCOH(X_{\et})}(\MOD(X_{\et}))$ which is
equivalent to $\DQCOHinfty(X)$~\cite[Prop.~12.10.1]{MR1771927}.

\begin{proposition}[{cf.\ \cite[Rem.~1.2.3]{MR3037900}}]\label{P:dqcohinfty}
Let $X$ be an algebraic stack. Then
\[
\DQCOHinfty(X)=\varprojlim_{\spec A\to X} \DCATinfty(\MOD(A))
\]
where the limit is taken in the $\infty$-category of $\infty$-categories
and is over either (i) the category of all affine schemes over $X$,
(ii) the full subcategory of those smooth over $X$, or (iii) the subcategory
of those smooth over $X$ with only smooth $X$-morphisms $\spec A\to \spec B$.
\end{proposition}

We denote the limit of the right-hand side going over all morphisms by
$\QCOHinfty(X)$. This is a stable $\infty$-category which is left- and
right-complete, and has a t-structure with heart the abelian category $\QCOH(X)$~\cite[Cor.~9.1.3.2,
  Rem.~9.1.3.3]{lurie_sag}.
The $\infty$-category $\QCOHinfty(X)$ also makes
sense for any contravariant
functor $X$ from affine schemes to groupoids and can be adapted to variants in
derived algebraic geometry. This is how derived categories of
quasi-coherent sheaves usually are defined in derived algebraic geometry,
cf.\ \cite[\S3.1]{MR2669705}, \cite[\S1.2]{MR3037900}, 
\cite[\S6.2.2]{lurie_sag} and \cite[I.3, \S1.1]{gaitsgory-rozenblyum_dag-book}.

The category $\DQCOH(X)$ is
left-complete~\cite[Thm.~B.1]{hallj_neeman_dary_no_compacts}. We do not use
this fact in the proof so we obtain an independent proof of the
left-completeness of both $\DQCOH(X)$ and $\DQCOHinfty(X)$.




\begin{proof}[Proof of Proposition~\ref{P:dqcohinfty}]
  The limits restricted to smooth morphisms or to smooth morphisms with
  smooth maps between them, are also equivalent to
  $\QCOHinfty(X)$~\cite[I.3, \S1.4.2]{gaitsgory-rozenblyum_dag-book}.
  Moreover, if $U\to X$ is a smooth presentation and $U^+_\bullet$ is the
  corresponding semi-simplicial algebraic space, then restricting
  the limit to the diagram $U^+_\bullet$ gives the same
  limit (use that $X$ is a stack and that $\Delta^+\subset \Delta$ is right
  cofinal, cf.\ \cite[Prop. 6.2.3.1 and pf.\ of Prop.~9.1.3.1]{lurie_sag}
  or~\cite[I.3, Cor.~1.3.11]{gaitsgory-rozenblyum_dag-book}).
  We may also instead take the limit over the site $U^+_{\bullet,\et}$
  since $U^+_\bullet\colon \Delta^+\to U^+_{\bullet,\et}$ is right cofinal.

  We have a sequence of maps between $\infty$-categories
  \begin{align*}
  \DQCOHinfty(X)\xrightarrow{\;\alpha\;}
  \DQCOHinfty(U^+_{\bullet,\et}) &\xrightarrow{\;\beta\;}
  \varprojlim_{V\in U^+_{\bullet,\et}} \DQCOHinfty(U^+_{\bullet,\et}/V)\\
  &\xrightarrow{\;\gamma\;}
  \varprojlim_{V\in U^+_{\bullet,\et}} \DQCOHinfty(V)\xleftarrow{\;\delta\;}
  \QCOHinfty(X).
  \end{align*}
  That $\alpha$ is an equivalence follows by unbounded cohomological
  descent~\eqref{eq:cohomological_descent}. That $\beta$ is an equivalence
  follows from Lemma~\ref{L:dcatinfty-topos} applied to the semi-simplicial
  \'etale site $U^+_{\bullet,\et}$ since having quasi-coherent cohomology can
  be verified locally.
  The map $\gamma$ comes from the morphism of topoi $\epsilon\colon
  U^+_{\bullet,\et}/V\to V_\et$. Since
  $\epsilon_*$ (restriction) is exact and $\epsilon^*$ is exact and fully
  faithful with
  essential image the cartesian modules, we obtain equivalences between
  cartesian modules and between derived categories of modules with cartesian
  cohomology sheaves, cf.\ \cite[Prop.~12.10.1]{MR1771927}. This shows that
  $\gamma$ is an equivalence. We saw that $\delta$ was an equivalence above.
\end{proof}

\begin{remark}
  The maps $\alpha$, $\beta$ and $\gamma$ are also equivalences if we replace
  quasi-coherent cohomology with cartesian cohomology.
\end{remark}

\begin{remark}
  If in addition $X$ has affine diagonal, or is noetherian and affine-pointed,
  then the natural functor $\DCAT^+(\QCOH(X))\to\DQCOH^+(X)$ is
  an equivalence~\cite[App.~C]{hallj_neeman_dary_no_compacts}.
  It follows that $\DCATinfty^+(\QCOH(X))\to \QCOHinfty^+(X)$ is an
  equivalence, which can also be proven directly,
  cf.\ \cite[Rem.~1.2.10]{MR3037900} and~\cite[I.3, Prop.~2.4.3]{gaitsgory-rozenblyum_dag-book}.
  Note that $\DCAT(\QCOH(X))$ and $\DCATinfty(\QCOH(X))$ are not always
  left-complete, e.g., when $X=B\Ga$ in positive
  characteristic~\cite{MR2875857}. They are left-complete, and hence coincide
  with $\DQCOH(X)$ and $\DQCOHinfty(X)$, respectively, when
  $\DQCOH(X)$ is compactly
  generated~\cite[Thm.~1.2]{hallj_neeman_dary_no_compacts}.
\end{remark}

\newcommand{\Einfty}{\mathbb{E}_\infty}
\newcommand{\CAT}{\mathsf{Cat}}
\newcommand{\SET}{\mathsf{Set}}
\newcommand{\dg}{\mathrm{dg}}
\newcommand{\dgCAT}{\CAT_\dg(\Z)}
\newcommand{\CH}{\mathrm{CH}}

\begin{lemma}\label{L:dcatinfty-topos}
Let $(\mathscript{T},\Orb)$ be a ringed topos and let
$(\mathscript{T}/U,\Orb|_U)$ denote the localized topos for
any $U\in \mathscript{T}$. Then the assignment $U\mapsto
\DCATinfty(\MOD(\Orb|_U))$ is a sheaf, that is, there is a limit preserving
functor
\begin{align*}
\mathscript{T}^\opp &\longrightarrow \CAT_\infty\\
U &\longmapsto \DCATinfty(\MOD(\Orb|_U)).
\end{align*}
\end{lemma}
\begin{proof}
We identify a ringed topos with a $\Einfty$-ringed $\infty$-topos by taking
nerves. Such an $\infty$-topos is a $1$-topos, that is, discrete, and $\Orb$
is discrete.
We let $\MODinfty_{\Orb}$ denote the $\infty$-category of $\Orb$-module
spectra. Its heart is the category of usual modules
$\MOD(\Orb)$~\cite[Def.~2.1.0.1, Rem.~2.1.2.1]{lurie_sag}. By the universal
property of derived categories, there is a functor
$\DCATinfty^+(\MOD(\Orb|_U))\to \MODinfty_{\Orb|_U}$.
This functor extends to
a fully faithful functor
$\DCATinfty(\MOD(\Orb|_U))\to \MODinfty_{\Orb|_U}$ whose essential
image is the full subcategory of hypercomplete
objects~\cite[Cor.~2.1.2.3]{lurie_sag}.

By~\cite[Rem.~2.1.0.5]{lurie_sag}, there is a limit-preserving functor $U
\mapsto \MODinfty_{\Orb|_U}$. Since being hypercomplete is a local
property~\cite[Rem.~6.5.2.22]{MR2522659}, we obtain a limit-preserving functor
$U \mapsto \DCATinfty(\MOD(\Orb|_U))$.
\end{proof}

\begin{remark}[Deligne--Mumford stacks]
  If $X$ is a Deligne--Mumford stack, we can associate a spectral
  Deligne--Mumford stack $\sX$ to $X$.  To $\sX$, one associates a stable
  $\infty$-category of quasi-coherent sheaves $\QCOHinfty(\sX)$, a subcategory
  of $\MODinfty(\sX)=\MODinfty(\Orb_{X_\et})$. It is
  equivalent to $\QCOHinfty(X)$ as defined
  above~\cite[Prop.~6.2.4.1]{lurie_sag}, hence to
  $\DQCOHinfty(X)$. This can also be seen directly as
  follows~\cite[Cor.~2.2.6.2]{lurie_sag}.
  The $\infty$-category $\DCATinfty(\MOD(\Orb_{X_\et}))$ can be identified with
  the full subcategory of hypercomplete objects of
  $\MODinfty(\sX)$~\cite[Cor.~2.1.2.3]{lurie_sag} and $\QCOHinfty(\sX)$ can be
  identified with the full subcategory of hypercomplete objects with
  quasi-coherent homotopy sheaves~\cite[Prop.~2.2.6.1]{lurie_sag}. That is,
  $\DQCOHinfty(X_\et)=\QCOHinfty(\sX)$.
\end{remark}

\section{Concentrated morphisms of algebraic stacks}\label{sec:conc_stacks}
A morphism of schemes $f \colon X \to Y$ is concentrated if it is
quasi-compact and quasi-separated \cite[\S3.9]{MR2490557}. Concentrated morphisms of schemes
are natural to consider when working with
unbounded derived categories of quasi-coherent sheaves. Indeed, if $f$
is concentrated, then
\begin{enumerate}
\item $\RDERF\QCPSH{f}$ coincides with the restriction of $\RDERF
(f_{\mathrm{Zar}})_*$ to $\DQCOH(X)$,
\item $\RDERF\QCPSH{f}$ preserves small coproducts, and
\item $\RDERF\QCPSH{f}$ is compatible with flat base change on $Y$.
\end{enumerate}
Here, as before, $\RDERF\QCPSH{f}$ denotes the right adjoint to the
unbounded derived functor $\LDERF \QCPBK{f} \colon \DQCOH(Y) \to \DQCOH(X)$.
In this
section we isolate a class of morphisms of algebraic stacks, which we
will also call concentrated, that enjoy the same properties.

\begin{definition}
Let $n\geq 0$ be an integer. A quasi-compact and quasi-separated morphism
of algebraic stacks $f \colon X \to Y$ has \fndefn{cohomological
  dimension $\leq n$} if for all $i>n$ and all $\shv{M}\in
\QCOH(X)$ we have that $\RDERF^i \MODPSH{f}\shv{M} = 0$ (by Lemma
\ref{lem:qcqs_conditions}\itemref{item:qcqs_conditions:global_trivial_duality},
this is equivalent to $\RDERF^i \QCPSH{f}\shv{M} = 0$).  
\end{definition}

The next result
is inspired by \cite[Prop.~3.9]{2008arXiv0804.2242A}, where similar
results are proven in the context of cohomologically affine morphisms. Note, however, that cohomologically
affine morphisms are not quite the same as morphisms of cohomological
dimension $\leq 0$ \cite[Rem.~3.5]{2008arXiv0804.2242A}. 
\begin{lemma}\label{lem:fcd}
  Let $f \colon X \to Y$ be a $1$-morphism of algebraic stacks that is
  quasi-compact and quasi-separated. Let $n\geq 0$
  be an integer.
  \begin{enumerate}
  \item\label{lem:fcd:item:2mor} Let $\alpha \colon f
    \Rightarrow f'$ be a $2$-morphism. If $f$ has cohomological
    dimension $\leq n$, then so has $f'$.
  \item\label{lem:fcd:item:ff_desc} Let 
    $g \colon Z \to Y$ be a $1$-morphism of algebraic stacks that is
    faithfully flat. If  $f_Z \colon X\times_Y Z
    \to Z$ has cohomological dimension $\leq n$, then so has $f$.
  \item\label{lem:fcd:item:affine} If $f$ is affine, then
    it has cohomological dimension $\leq 0$. 
  \item\label{lem:fcd:item:comp} Let $h \colon W \to X$ be a $1$-morphism
    of algebraic stacks that is quasi-compact and quasi-separated and
    let $m\geq 0$ be an integer. If $f$ (resp.~$h$) has
    cohomological dimension $\leq n$ (resp.~$\leq m$), then the
    composition $f\circ h \colon W \to Y$ has cohomological
    dimension $\leq m+n$. 
  \item\label{lem:fcd:item:quaff_bc} Let $g \colon Z \to Y$ be a
    $1$-morphism of algebraic stacks that is \emph{quasi-affine}. If $f$ has
    cohomological dimension $\leq n$, then so has the $1$-morphism 
    $f_Z \colon X\times_Y Z \to Z$.
  \item\label{lem:fcd:item:quaff_delta} Let $g \colon Z \to Y$ be a
    $1$-morphism of algebraic stacks. If $f$
    has cohomological dimension $\leq n$ and $Y$ has
    quasi-affine diagonal, then the $1$-morphism $f_Z \colon X\times_Y Z
    \to Z$ has cohomological dimension $\leq n$.
  \end{enumerate}
\end{lemma}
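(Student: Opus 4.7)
My plan is to dispatch each of the six items in turn, using flat base change (Lemma~\ref{lem:qcqs_conditions}(4)), the Grothendieck composition spectral sequence, and smooth descent as the three main tools.

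Item (1) is immediate because a $2$-morphism $\alpha\colon f\Rightarrow f'$ induces a canonical isomorphism $\RDERF\MODPSH{f}\homeq\RDERF\MODPSH{f'}$. For (2), fix $\shv{M}\in\QCOH(X)$ and $i>n$. Flat base change applied to $\shv{M}[0]\in\DQCOH^{+}(X)$, combined with flatness of $g'$, yields a natural isomorphism $g^{*}\RDERF^{i}\MODPSH{f}\shv{M}\homeq\RDERF^{i}\MODPSH{f_{Z}}(g')^{*}\shv{M}$. The right-hand side vanishes by hypothesis, and faithful flatness of $g$ makes $g^{*}$ conservative on $\QCOH$, forcing $\RDERF^{i}\MODPSH{f}\shv{M}=0$. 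For (3), smooth descent on $Y$ together with (2) reduces to the case where $Y$ is an affine scheme; then $X$ is also an affine scheme and Serre vanishing gives cohomological dimension $0$.

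For (4), apply the Grothendieck composition spectral sequence
\[
\RDERF^{p}\MODPSH{f}\,\RDERF^{q}\MODPSH{h}\,\shv{M}\;\Rightarrow\;\RDERF^{p+q}\MODPSH{(f\circ h)}\shv{M},
\]
whose $E_{2}$-terms are quasi-coherent by Lemma~\ref{lem:qcqs_conditions}(1). If $p+q>m+n$, then $p>n$ or $q>m$, and in either case the corresponding term vanishes by the hypothesis on $f$ or $h$. For (5), factor the quasi-affine $g$ as $\bar g\circ j$ with $j$ a quasi-compact open immersion and $\bar g$ affine. The open-immersion case is flat and handled exactly as in (2). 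For $g$ affine, its base change $g'$ is also affine and so $(g')_{*}$ is exact on $\QCOH$; one obtains
\[
g_{*}\RDERF^{i}\MODPSH{f_{Z}}\shv{M}\homeq\RDERF^{i}\MODPSH{f}\,(g')_{*}\shv{M},
\]
which vanishes for $i>n$ by the hypothesis on $f$, and conservativity of $g_{*}$ on $\QCOH$ for $g$ affine concludes.

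Finally, for (6) I reduce via (2) to the case that $Z$ is an affine scheme, and then claim $g\colon Z\to Y$ is quasi-affine, whereupon (5) applies. To see the claim, pick a smooth cover $W\to Y$ by an affine scheme; the map $Z\times_{Y}W\to Z\times W$ is a base change of the quasi-affine $\Delta_{Y}$, hence quasi-affine, and $Z\times W$ is affine, making $Z\times_{Y}W$ a quasi-affine scheme. Thus $Z\times_{Y}W\to W$ is quasi-affine, and being quasi-affine is fpqc-local on the target, so $g$ itself is quasi-affine. The main obstacle I expect is (6): its proof hinges on translating the diagonal condition on $Y$ into a quasi-affineness statement for the otherwise arbitrary morphism $g$, and this only becomes accessible after first reducing $Z$ to an affine scheme via (2). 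The remaining parts are essentially formal consequences of the base-change machinery and the composition spectral sequence.
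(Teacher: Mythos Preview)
Your argument is largely correct and follows the paper's route closely for items (1)--(4), the affine half of (5), and (6). In particular, your treatment of (6)---reducing $Z$ to an affine scheme via (2) and then observing that $g$ becomes quasi-affine because $\Delta_Y$ is---matches the paper exactly (with more detail than the paper gives).

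There is, however, a slip in the open-immersion half of (5). You write that this case ``is flat and handled exactly as in (2)'', but the logic of (2) runs in the \emph{opposite} direction: there one assumes the base change $f_Z$ has small cohomological dimension and concludes that $f$ does, using that $g$ is faithfully flat and hence $g^*$ is conservative. In (5) you need the converse implication, and an open immersion is neither surjective nor faithfully flat, so (2) does not apply. What is actually needed is the additional observation (used explicitly in the paper) that for a quasi-compact open immersion $g$, the counit $(g')^*(g')_*M\to M$ is an isomorphism for every $M\in\QCOH(X_Z)$. Combining this with flat base change gives
\[
\RDERF^{i}\MODPSH{(f_Z)}M \;\cong\; \RDERF^{i}\MODPSH{(f_Z)}(g')^*(g')_*M \;\cong\; g^*\RDERF^{i}\MODPSH{f}\bigl((g')_*M\bigr),
\]
which vanishes for $i>n$ since $(g')_*M\in\QCOH(X)$. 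This is a small but essential missing step; once you insert it, your proof is complete and agrees with the paper's.
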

\begin{proof}
  The claim \itemref{lem:fcd:item:2mor} is trivial. To address the claim
  \itemref{lem:fcd:item:ff_desc} we note that higher pushforwards
  commute with flat base change (Lemma \ref{lem:qcqs_conditions}\itemref{item:qcqs_conditions:bdd_fl_bc}). As faithfully flat
  morphisms are conservative, the morphism $f$ has cohomological dimension $\leq 
  n$. The claim \itemref{lem:fcd:item:affine} follows trivially from
  \itemref{lem:fcd:item:ff_desc}. The claim \itemref{lem:fcd:item:comp}
  follows from the Leray spectral sequence.

  We now address the claim \itemref{lem:fcd:item:quaff_bc}. Denote the
  pullback of $g$ by $f$ as $g_X \colon Z_X \to X$ and throughout we fix $M
  \in \QCOH(Z_X)$. We first assume that the morphism $g$ is a
  quasi-compact open immersion. In this situation the adjunction
  $\QCPBK{(g_X)}\MODPSH{(g_X)}M \to M$ is an isomorphism. For $i\geq 0$
  we deduce that there are isomorphisms in $\QCOH(Z)$:
  \[
  \RDERF^i
  \MODPSH{(f_Z)}\bigl(\QCPBK{(g_X)}\MODPSH{(g_X)}M\bigr) \to \RDERF^i \MODPSH{(f_Z)}M.
  \]
  Since higher pushforward commute with flat base change, 
  we deduce that for all $i\geq 0$ there are isomorphisms:
  \[
  g^*\RDERF^i\MODPSH{f}\bigl(\MODPSH{(g_X)}M\bigr) \to \RDERF^i \MODPSH{(f_Z)}M.
  \]
  Since $\MODPSH{(g_X)}M \in \QCOH(X)$, it follows that $f_Z \colon
  X_Z\to Z$ has cohomological dimension $\leq n$. Next
  assume that the morphism $g$ is affine. Then the morphism $g_X$ is also
  affine and so, by \itemref{lem:fcd:item:affine}, both morphisms have
  cohomological dimension $\leq 0$. By \itemref{lem:fcd:item:comp} we
  conclude that the composition $f\circ g_X \colon Z_X \to Y$ has
  cohomological 
  dimension $\leq n$. But we have a $2$-isomorphism
  $f\circ g_X \Rightarrow g\circ f_Z$ and so by
  \itemref{lem:fcd:item:2mor} the morphism $g\circ f_Z$
  has cohomological dimension $\leq n$. By the Leray spectral
  sequence, however, we see that there is an isomorphism for all
  $i\geq 0$:
  \[
  \MODPSH{g}\RDERF^i \MODPSH{(f_Z)}M \to \RDERF^i\MODPSH{(g\circ f_Z)}M.
  \]
  Since the morphism $g$ is affine, the functor $g_{*}$ is
  faithful; thus we conclude that the morphism $f_Z$ has
  cohomological dimension $\leq n$.  In general, a quasi-affine
  morphism $g \colon Z \to Y$ factors as $Z \xrightarrow{j} \bar{Z}
  \xrightarrow{\bar{g}} Y$, where the morphism $j$ is a
  quasi-compact open immersion and the morphism $\bar{g}$ is
  affine. Combining the above completes the proof of
  \itemref{lem:fcd:item:quaff_bc}. 

  To prove the claim \itemref{lem:fcd:item:quaff_delta} we observe that by \itemref{lem:fcd:item:ff_desc} the statement is
  smooth local on $Z$---thus we are free to assume that $Z$ is an affine
  scheme. Since the diagonal of the stack $Y$ is quasi-affine,
  the morphism $g \colon Z \to Y$ is quasi-affine. An application of 
  \itemref{lem:fcd:item:quaff_bc} now gives the claim. 
\end{proof}
We wish to point out that Lemma
\ref{lem:fcd}\itemref{lem:fcd:item:quaff_delta} is false if $Y$ does
not have affine stabilizers
\cite[Rem.~1.6]{hallj_dary_alg_groups_classifying}. 
\begin{definition}
A quasi-compact and quasi-separated morphism $f \colon X \to Y$ of algebraic
stacks has \fndefn{finite cohomological dimension} if
there exists an integer $n\geq 0$ such that the morphism $f$ has
cohomological dimension $\leq n$. 
\end{definition}

Morphisms of quasi-compact
and quasi-separated algebraic
spaces have finite cohomological dimension \cite[\spref{073G}]{stacks-project}. V.~Drinfeld and D.~Gaitsgory
\cite[Thm.~1.4.2, \S2]{MR3037900} have shown that a morphism of
quasi-compact and quasi-separated algebraic stacks $f \colon X \to Y$
has finite cohomological dimension if $Y$ is a $\Q$-stack and $f$ has affine
stabilizers and finitely presented inertia. This result is refined and
generalized in \cite{hallj_dary_alg_groups_classifying}: the condition on
inertia is not required and in positive characteristic $f$ has finite
cohomological dimension exactly when $f$ has linearly reductive stabilizers.

\begin{definition}
A morphism of algebraic stacks $f \colon X \to Y$ is \fndefn{concentrated}
if it is quasi-compact, quasi-separated, and for every quasi-compact and
quasi-separated algebraic stack $Z$ and every morphism $g \colon Z \to Y$, the
pulled back morphism $f_Z \colon X_Z \to Z$ has finite cohomological
dimension.
\end{definition}

By the result of Drinfeld and Gaitsgory, a quasi-compact and
quasi-separated morphism of algebraic stacks $f\colon X\to Y$ is concentrated
if $Y$ is a $\Q$-stack and $f$ has affine stabilizers.

The next result is immediate from Lemma \ref{lem:fcd}.
\begin{lemma}\label{lem:conc}
  Let $f \colon X \to Y$ be a $1$-morphism of algebraic stacks that is
  quasi-compact and quasi-separated.
  \begin{enumerate}
  \item\label{lem:conc:item:bc} If $f$ is concentrated, then
    it remains so after base change. 
  \item\label{lem:conc:item:flat_local} Let $g \colon Z \to Y$ be a
    $1$-morphism that is faithfully flat. If $f_Z \colon X\times_Y Z \to Z$
    is concentrated, then so is $f$.
  \item\label{lem:conc:item:rep} If $f$ is representable,
    then it is concentrated. 
  \item\label{lem:conc:item:propP} Let $h \colon Y \to W$ be a $1$-morphism
    that is concentrated. Then the composition $h\circ f \colon X \to W$ is
    concentrated if and only if $f$ is concentrated.
  \item\label{lem:conc:item:quaff_delta} Assume that $Y$ is
    quasi-compact with quasi-affine diagonal. Then $f$ is
    concentrated if and only if it has finite cohomological
    dimension.  
  \end{enumerate}
\end{lemma}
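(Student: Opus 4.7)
The strategy is to reduce each item to the corresponding item of Lemma \ref{lem:fcd}, using that the base change $f_Z$ to a qcqs test stack is always qcqs (since $f$ is). I expect \itemref{lem:conc:item:bc}, \itemref{lem:conc:item:rep}, and \itemref{lem:conc:item:quaff_delta} to be essentially immediate; the subtle cases are \itemref{lem:conc:item:flat_local}, where the descent base $Z$ need not be quasi-compact, and the forward direction of \itemref{lem:conc:item:propP}, which requires analyzing the diagonal $\Delta_h$.

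I would dispose of the easy items first. For \itemref{lem:conc:item:bc}, any qcqs $T\to Y'$ yields a qcqs $T\to Y$ with $(f_{Y'})_T=f_T$, of finite cohomological dimension by concentration of $f$. For \itemref{lem:conc:item:rep}, the base change of a representable qcqs $f$ along a qcqs $Z$ is a qcqs morphism of algebraic spaces with qcqs source, hence of finite cohomological dimension by \cite[\spref{073G}]{stacks-project}. For \itemref{lem:conc:item:quaff_delta}, one direction takes $Z=Y$ (which is qcqs, since quasi-affine diagonal implies quasi-separatedness), while the converse follows from Lemma \ref{lem:fcd}\itemref{lem:fcd:item:quaff_delta} applied to each qcqs test stack.

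For \itemref{lem:conc:item:flat_local}, given a qcqs $W\to Y$, I would choose a smooth cover $U\to Z$ by a scheme and then a quasi-compact open $U'\subseteq U$ (possible because $W$ is quasi-compact) such that $U'\times_Y W\to W$ remains faithfully flat. Then $U'$ is a qcqs scheme, $f_{U'}$ is concentrated by \itemref{lem:conc:item:bc}, and evaluating the definition on $U'\to U'$ shows that $f_{U'}$ has finite cohomological dimension. Since the scheme $U'$ has affine diagonal, Lemma \ref{lem:fcd}\itemref{lem:fcd:item:quaff_delta} transports finite cohomological dimension to $f_{U'\times_Y W}$, and Lemma \ref{lem:fcd}\itemref{lem:fcd:item:ff_desc} applied to the faithfully flat morphism $U'\times_Y W\to W$ then yields finite cohomological dimension of $f_W$.

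For \itemref{lem:conc:item:propP}, the reverse direction (composition of concentrated morphisms) is routine: for qcqs $Z\to W$ the stack $Z':=Y\times_W Z$ is qcqs (since $h$ is), and the factorization $(hf)_Z=h_Z\circ f_{Z'}$ then falls to Lemma \ref{lem:fcd}\itemref{lem:fcd:item:comp}. The forward direction is the main obstacle. Assume $hf$ and $h$ are concentrated; for qcqs $Z\to Y$ I would factor $f_Z$ as $X\times_Y Z\to X\times_W Z\to Z$, where the second arrow is $(hf)_Z$ and the first is the base change of $\Delta_h\colon Y\to Y\times_W Y$ that cuts out $X\times_Y Z$ from $X\times_W Z$ by the condition that the two resulting composites to $Y$ agree. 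Both $X\times_Y Z$ and $X\times_W Z$ are qcqs (the former because $f$ is qcqs, the latter because $hf$ is), and $\Delta_h$ is representable and qcqs (because $h$ is concentrated), so the first arrow is a representable qcqs morphism of qcqs stacks; item \itemref{lem:conc:item:rep} then supplies its finite cohomological dimension, and Lemma \ref{lem:fcd}\itemref{lem:fcd:item:comp} combines with finite cohomological dimension of $(hf)_Z$ to close the argument.
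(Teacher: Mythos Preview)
Your approach matches the paper's, which simply declares the result ``immediate from Lemma~\ref{lem:fcd}.'' Your expansion is essentially correct, with one genuine slip and one point needing more care.

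The slip is in \itemref{lem:conc:item:rep}. For a general qcqs test stack $Z\to Y$, the base change $f_Z\colon X\times_Y Z\to Z$ is a representable qcqs morphism of \emph{algebraic stacks}, not of algebraic spaces; its source need not be an algebraic space, so \cite[\spref{073G}]{stacks-project} does not apply directly. The fix is exactly what the paper has in mind by ``immediate from Lemma~\ref{lem:fcd}'': first use Lemma~\ref{lem:fcd}\itemref{lem:fcd:item:ff_desc} to replace $Z$ by a smooth cover by an affine scheme $Z'$, after which $X\times_Y Z'$ is a qcqs algebraic space and the citation applies.

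In \itemref{lem:conc:item:flat_local} your argument is correct but two assertions about $U'$ deserve a word. First, a quasi-compact open in an arbitrary scheme need not be quasi-separated nor have affine diagonal; you should take $U'$ to be a finite disjoint union of affine opens of $U$ (so $U'$ is affine, hence qcqs with affine diagonal). Second, the existence of such a $U'$ with $U'\times_Y W\to W$ surjective is not quite ``because $W$ is quasi-compact'' alone: since $g$ need not be locally of finite presentation, images of affine opens of $U$ in $|Y|$ need not be open. Instead, choose a smooth surjection $V\to U\times_Y W$ with $V$ affine (using that $W$ is qc), note that the image of $V$ in $U$ is quasi-compact, and let $U'$ be a finite disjoint union of affine opens covering that image; then $V\to W$ factors through $U'\times_Y W$, forcing surjectivity. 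Once $U'$ is affine, your use of $f_{U'}=(f_Z)_{U'}$ via \itemref{lem:conc:item:bc}, then Lemma~\ref{lem:fcd}\itemref{lem:fcd:item:quaff_delta} and \itemref{lem:fcd:item:ff_desc}, goes through.

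Your treatment of \itemref{lem:conc:item:propP} via the factorization through $\Delta_h$ is correct and is the intended argument; the representability and qcqs property of $\Delta_h$ follow from $h$ being a quasi-separated morphism of algebraic stacks.
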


The main result of this section is the following Theorem
that refines Lemma \ref{lem:qcqs_conditions}. 
\begin{theorem}\label{thm:fcd_coprod}
  Let $f \colon X \to Y$ be a concentrated $1$-morphism of algebraic stacks
  \begin{enumerate}
  \item \label{thm:fcd_coprod:item:uni_bd} If $Y$ is quasi-compact and
    quasi-separated, then there is an integer $n$ such that the
    natural morphism
    \[
    \trunc{\geq j}\RDERF \QCPSH{f}\cplx{M} \to \trunc{\geq j}\RDERF
    \QCPSH{f}(\trunc{\geq j-n}\cplx{M})
    \]
    is a quasi-isomorphism for every integer $j$ and $\cplx{M} \in \DQCOH(X)$. 
  \item\label{thm:fcd_coprod:item:ra_coincide} The restriction of $\RDERF
    \MODPSH{f}$ to $\DQCOH(X)$ coincides with $\RDERF\QCPSH{f}$.
  \item \label{thm:fcd_coprod:item:sm_cp} The functor $\RDERF
    \QCPSH{f}$ preserves small coproducts. 
  \item\label{thm:fcd_coprod:item:fl_bc} If $g \colon Y' \to Y$ is a flat morphism of algebraic stacks, then the $2$-cartesian square:
    \[
    \xymatrix{X' \ar[r]^-{g'} \ar[d]_{f'} & X \ar[d]^f\\ Y'
      \ar[r]^{g} & Y}
    \]
    induces a natural quasi-isomorphism for every $\cplx{M}\in \DQCOH(X)$:
    \[
    \LDERF \QCPBK{g} \RDERF \QCPSH{f}\cplx{M} \homotopic \RDERF
    \QCPSH{f'} \LDERF \QCPBK{g'}\cplx{M}.
    \]
  \end{enumerate}
\end{theorem}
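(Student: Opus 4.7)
The plan is to reduce all four statements to the case where $Y$ is affine by smooth descent, and then bootstrap the bounded-below assertions of Lemma~\ref{lem:qcqs_conditions} to the unbounded setting using a uniform cohomological-dimension bound. All four claims are smooth-local on $Y$: Lemma~\ref{lem:qcqs_conditions}\itemref{item:qcqs_conditions:bdd_fl_bc} lets one pull back along a smooth atlas, using that flat pullback is $t$-exact via \eqref{eq:fl_pbk} and commutes with arbitrary coproducts. Having reduced to $Y$ affine, so in particular $Y$ has affine diagonal, Lemma~\ref{lem:conc}\itemref{lem:conc:item:quaff_delta} produces an integer $n$ with $\RDERF^i\QCPSH{f}\shv{N}=0$ for all $i>n$ and $\shv{N}\in\QCOH(X)$.

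Part (1) is the crux, and amounts to the amplitude estimate: if $\cplx{N}\in\DQCOH^{\leq d}(X)$, then $\RDERF\QCPSH{f}\cplx{N}\in\DCAT^{\leq d+n}(Y)$. Granting this, applying it to $\trunc{<j-n}\cplx{M}$ in the truncation triangle $\trunc{<j-n}\cplx{M}\to\cplx{M}\to\trunc{\geq j-n}\cplx{M}$ yields the asserted quasi-isomorphism. For $\cplx{N}$ bounded below, the bound is immediate from the hypercohomology spectral sequence \eqref{eq:hyper_coho}, which converges by Lemma~\ref{lem:qcqs_conditions}\itemref{item:qcqs_conditions:bdd_psh_cplx}. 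The unbounded-below case is the main obstacle: my approach is to exploit left-completeness of the standard $t$-structure on $\DQCOH(X)$ to write $\cplx{N}\homotopic\holim{m} \trunc{\geq -m}\cplx{N}$ and use that the right adjoint $\RDERF\QCPSH{f}$ commutes with homotopy limits; each $\RDERF\QCPSH{f}(\trunc{\geq -m}\cplx{N})$ has cohomology in degrees $[-m,d+n]$ by the bounded-below bound, and transition maps induce isomorphisms on $H^e$ once $m$ exceeds $|e|$, so the Milnor sequence for $\holim{}$ forces $H^e\RDERF\QCPSH{f}\cplx{N}=0$ for $e>d+n$.

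Parts (2), (3), (4) then follow from (1) by truncation arguments combined with the bounded-below statements in Lemma~\ref{lem:qcqs_conditions}. For (2): given $\cplx{M}\in\DQCOH(X)$ and $j\in\Z$, the bound in (1) and its analogue for $\RDERF\MODPSH{f}$ show that both $\trunc{\geq j}\RDERF\QCPSH{f}\cplx{M}$ and $\trunc{\geq j}\RDERF\MODPSH{f}\cplx{M}$ depend only on $\trunc{\geq j-n}\cplx{M}\in\DQCOH^+(X)$, where Lemma~\ref{lem:qcqs_conditions}\itemref{item:qcqs_conditions:global_trivial_duality} identifies the two functors; letting $j\to-\infty$ yields (2). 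For (3): canonical truncation commutes with arbitrary coproducts, so the same reduction together with Lemma~\ref{lem:qcqs_conditions}\itemref{item:qcqs_conditions:bdd_coprod} gives the result. For (4): the base change map exists by adjunction, and applying (1) to both $f$ and $f'$ (the latter concentrated of cohomological dimension $\leq n$ by Lemma~\ref{lem:conc}\itemref{lem:conc:item:bc}), combined with $t$-exactness of $\LDERF\QCPBK{g}$, reduces the isomorphism check to the bounded-below case Lemma~\ref{lem:qcqs_conditions}\itemref{item:qcqs_conditions:bdd_fl_bc}.
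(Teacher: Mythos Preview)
Your overall strategy---reduce to $Y$ affine, establish a uniform amplitude bound, then bootstrap the bounded-below results of Lemma~\ref{lem:qcqs_conditions}---matches the paper's. Your treatment of \itemref{thm:fcd_coprod:item:sm_cp} and \itemref{thm:fcd_coprod:item:fl_bc} is correct and essentially identical to the paper's. Your argument for \itemref{thm:fcd_coprod:item:uni_bd} via left-completeness of $\DQCOH(X)$, preservation of homotopy limits by the right adjoint $\RDERF\QCPSH{f}$, and the Milnor sequence (with $Y$ affine so $\DQCOH(Y)\homotopic\DCAT(A)$) is a legitimate alternative to the paper's route; the paper instead passes to the simplicial \'etale description \eqref{eq:simplicial_construction_of_derived_functors} and cites \cite[Lem.~2.1.10]{MR2434692} to obtain the truncation bound \eqref{eq:LO_fcd} for $\RDERF(\tilde{f}^+_{\bullet,\et})_*$ directly.

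There is, however, a gap in your argument for \itemref{thm:fcd_coprod:item:ra_coincide}. You invoke ``the bound in (1) and its analogue for $\RDERF\MODPSH{f}$'', but you have not established that analogue, and your method for \itemref{thm:fcd_coprod:item:uni_bd} does not obviously transfer. Your holim argument used two features specific to $\RDERF\QCPSH{f}$: that $\cplx{N}\homotopic\holim{m}\trunc{\geq -m}\cplx{N}$ \emph{computed in $\DQCOH(X)$}, and that $\RDERF\QCPSH{f}$ preserves this because it is right adjoint on $\DQCOH$. To run the same argument for $\RDERF\MODPSH{f}$ you would need the holim of the tower $\trunc{\geq -m}\cplx{N}$ in $\DCAT(X)$ to recover $\cplx{N}$, i.e., that the inclusion $\DQCOH(X)\hookrightarrow\DCAT(X)$ preserves the relevant products; this is not clear a priori and is close to what you are trying to prove. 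The paper circumvents this precisely by working on the simplicial \'etale site, where $\tilde{f}^+_{\bullet,\et}$ is a genuine morphism of ringed topoi with left-exact inverse image and \cite[Lem.~2.1.10]{MR2434692} gives the needed bound outright; combined with Lemma~\ref{lem:qcqs_conditions}\itemref{item:qcqs_conditions:bdd_psh_cplx} this shows $\RDERF(\tilde{f}^+_{\bullet,\et})_*$ preserves $\DQCOH$, and then \eqref{eq:relationship_derived_pushforwards:mod_lisset}--\eqref{eq:two_pushforwards} identify $\RDERF\MODPSH{f}$ and $\RDERF\QCPSH{f}$ on $\DQCOH(X)$. To repair your approach you should either cite that lemma as well, or supply an independent reason why $\RDERF\MODPSH{f}$ satisfies the amplitude estimate on unbounded complexes in $\DQCOH(X)$.
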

\begin{proof}
  For \itemref{thm:fcd_coprod:item:ra_coincide}, choose
  a diagram as in
  \eqref{eq:simplicial_construction_of_derived_functors}. By the
  natural transformations
  \eqref{eq:relationship_derived_pushforwards:mod_lisset},
  \eqref{eq:relationship_derived_pushforwards:et_lisset}, and
  \eqref{eq:two_pushforwards}, it is sufficient to prove that the
  restriction of $\RDERF(\tilde{f}^+_{\bullet,\et})_*$ to
  $\DQCOH(U_{\bullet,\et}^+)$ factors through
  $\DQCOH(V_{\bullet,\et}^+)$. This can be verified smooth locally on
  $Y$, so we may assume that $Y$ is quasi-compact and quasi-separated and $f$ has
  cohomological dimension $\leq n$ for some integer $n$. In
  particular, $\RDERF^i(\tilde{f}^+_{\bullet,\et})_*\shv{M} = 0$ for
  every $i>n$ and $\shv{M} \in \QCOH(U_{\bullet,\et}^+)$. By
  \cite[Lem.~2.1.10]{MR2434692}, for every $\cplx{M} \in
  \DQCOH(U_{\bullet,\et}^+)$ and integer $j$ the natural morphism:
  \begin{equation}
    \trunc{\geq j}\RDERF (\tilde{f}_{\bullet,\et}^+)_* \cplx{M} \to
    \trunc{\geq j}\RDERF
    (\tilde{f}_{\bullet,\et}^+)_* (\trunc{\geq j-n}\cplx{M})\label{eq:LO_fcd} 
  \end{equation}
  is an isomorphism. By
  Lemma
  \ref{lem:qcqs_conditions}\itemref{item:qcqs_conditions:global_trivial_duality}
  the result follows. Note that the equation (\ref{eq:LO_fcd}) now
  proves \itemref{thm:fcd_coprod:item:uni_bd}. Finally, the claims
  \itemref{thm:fcd_coprod:item:sm_cp} and
  \itemref{thm:fcd_coprod:item:fl_bc} follow from
  \itemref{thm:fcd_coprod:item:uni_bd} and the corresponding results
  for the bounded below category in Lemma \ref{lem:qcqs_conditions}.
\end{proof}
\begin{corollary}\label{C:aff_equiv_der}
  If $f\colon X \to Y$ is an affine morphism of algebraic stacks, then there is a natural 
  equivalence of triangulated categories
  \[
  \bar{f}^* \colon \DQCOH(Y_{\lisset},f_*\Orb_X) \to \DQCOH(X).
  \]
\end{corollary}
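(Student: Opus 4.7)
My plan is to construct $\bar{f}^*$ as the derived left adjoint of the pushforward along $\tilde{f}$, where
\[
\tilde{f}\colon (X_{\lisset},\Orb_X)\longrightarrow (Y_{\lisset},f_*\Orb_X)
\]
is the morphism of ringed topoi whose underlying map of topoi is $f_{\lisset}$ but whose pushforward is considered as valued in $f_*\Orb_X$-modules. Since $f$ is affine it is representable, hence concentrated by Lemma~\ref{lem:conc}\itemref{lem:conc:item:rep}, and of cohomological dimension $\leq 0$ by Lemma~\ref{lem:fcd}\itemref{lem:fcd:item:affine}. Theorem~\ref{thm:fcd_coprod} with $n=0$ then ensures that $\RDERF\MODPSH{f}$ preserves $\DQCOH$, small coproducts, and flat base change, and agrees on $\DQCOH(X)$ with the termwise underived $f_*$.

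First I would establish the underived statement, namely that $\tilde{f}_*$ induces an equivalence of abelian categories
\[
\QCOH(X)\iso\QCOH(Y_{\lisset},f_*\Orb_X).
\]
This is smooth-local on $Y$ by faithfully flat descent for quasi-coherent sheaves (on both sides simultaneously). Picking a smooth surjection $V\to Y$ with $V$ an affine scheme, affineness of $f$ makes $X\times_Y V$ an affine scheme, and the statement reduces to the classical equivalence $\QCOH(\spec A)\simeq\MOD(A)$ applied to $V$ and to $X\times_Y V$.

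Next I would upgrade to a derived equivalence. Since $\tilde{f}_*$ is exact on quasi-coherent sheaves (cohomological dimension $\leq 0$), $\RDERF\tilde{f}_*$ is the termwise $f_*$ on $\DQCOH(X)$ and is in particular an equivalence of the hearts of the standard $t$-structures. The uniform truncation estimate of Theorem~\ref{thm:fcd_coprod}\itemref{thm:fcd_coprod:item:uni_bd} (with $n=0$) then lets me check the unit $\mathrm{id}\Rightarrow\RDERF\tilde{f}_*\LDERF\tilde{f}^*$ and the counit $\LDERF\tilde{f}^*\RDERF\tilde{f}_*\Rightarrow\mathrm{id}$ one cohomology sheaf at a time, reducing the whole unbounded statement to the underived equivalence of the previous paragraph.

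The principal technical obstacle is the rigorous construction of $\LDERF\tilde{f}^*$ on the lisse-\'etale topos and the verification that it preserves $\DQCOH$. I would address this via the hypercover presentation \eqref{eq:cohomological_descent}: choose a smooth hypercover $V_\bullet^+\to Y$ by affine schemes, and pull back to the smooth hypercover $U_\bullet^+:=V_\bullet^+\times_Y X$ of $X$, which, since $f$ is affine, is again by affine schemes. Level-wise the induced morphism is an affine map of affine schemes, and $\LDERF\tilde{f}^*$ is simply restriction of scalars along a ring map, for which the derived equivalence is classical. The simplicial structure then glues the level-wise equivalences into the required equivalence on the lisse-\'etale topoi, and everything else is formal manipulation of the adjunction $(\LDERF\tilde{f}^*,\RDERF\tilde{f}_*)$.
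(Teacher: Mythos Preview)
Your outline is essentially the paper's proof: pass to a smooth simplicial hypercover $V_\bullet\to Y$ with $U_\bullet=X\times_Y V_\bullet$, use that $f$ is affine of cohomological dimension $\leq 0$ to see that the pushforward is exact on $\DQCOH$, reduce the derived unit/counit isomorphisms to the abelian statement $\QCOH(X)\simeq\QCOH(Y_{\lisset},f_*\Orb_X)$, and verify the latter by smooth descent to the affine case.

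The one place where the paper is more careful than your sketch is precisely the ``principal technical obstacle'' you flag. To check the unit and counit one cohomology sheaf at a time you need $\LDERF\tilde f^*$, not just $\RDERF\tilde f_*$, to be $t$-exact on $\DQCOH$. The paper secures this by proving that the morphism of ringed topoi $g\colon (U_{\bullet,\et}^+,\Orb_{U_\bullet^+})\to (V_{\bullet,\et}^+,(\tilde f_{\bullet,\et}^+)_*\Orb_{U_\bullet^+})$ is \emph{flat}: restricting to each level and working \'etale-locally, this becomes the assertion that for $\spec B\to\spec A$ affine and primes $\mathfrak p\subset B$, $\mathfrak q=f(\mathfrak p)$, the map $B\otimes_A A_{\mathfrak q}^{\mathrm{sh}}\to B_{\mathfrak p}^{\mathrm{sh}}$ is flat. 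Your phrase ``simply restriction of scalars along a ring map'' elides this; levelwise the underlying topoi of $U_i$ and $V_i$ differ, and it is this flatness that makes $g^*$ exact and hence lets you carry out the cohomology-by-cohomology reduction you propose.
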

\begin{proof}
  Pick a diagram as in 
  \eqref{eq:simplicial_construction_of_derived_functors}. Since $f$ is
  an affine morphism, we may assume that $U_i = X\times_Y V_i$. The morphism of ringed 
  topoi $\tilde{f}_{\bullet,\et}^+ \colon
  U^+_{\bullet,\et} \to V^+_{\bullet,\et}$ factors as:
  \[
  (U^+_{\bullet,\et},\Orb_{U^+_{\bullet,\et}})
  \xrightarrow{g}
  (V^+_{\bullet,\et},(\tilde{f}_{\bullet,\et}^+)_*\Orb_{U^+_{\bullet,\et}})
  \xrightarrow{k} (V^+_{\bullet,\et}, \Orb_{V^+_{\bullet,\et}}).
  \]
  We claim that $g^{-1}(\tilde{f}_{\bullet,\et}^+)_*\Orb_{U^+_{\bullet,\et}} \to 
  \Orb_{U^+_{\bullet,\et}}$ is flat. It is sufficient to verify this upon restriction to each 
  $(U_i)_{\et}$ and then work \'etale-locally; thus, we may assume that $f$ is a 
  morphism of affine schemes $\spec B \to \spec A$. Hence, it suffices to prove that the 
  induced morphism of ringed sites  $\bar{f} \colon ((\spec B)_{\et},\Orb_{\spec 
    B_{\et}}) \to ((\spec A)_{\et}, (f_{\et})_*\Orb_{\spec B})$ is flat. This can be verified at geometric 
  points, so let $\mathfrak{p}$ be a prime 
  ideal of $B$. We must prove 
  that if $\mathfrak{q}=f(\mathfrak{p})$, then the induced ring homomorphism $B\otimes_A 
  A_{\mathfrak{q}}^{\mathrm{sh}} \to 
  B_{\mathfrak{p}}^{\mathrm{sh}}$ is flat, where $\mathrm{sh}$ denotes the strict 
  henselization at the 
  relevant prime ideal. Since 
  $\spec(B\tensor_A A_{\mathfrak{q}}^{\mathrm{sh}})$ has flat diagonal over $\spec B$ and 
  $\spec B_{\mathfrak{p}}^{\mathrm{sh}}$ is a flat $\spec B$-scheme, the assertion is clear and 
  the claim is proved.
  
  It follows that $g^* \colon \MOD 
  (V^+_{\bullet,\et},(\tilde{f}_{\bullet,\et}^+)_*\Orb_{U^+_{\bullet,\et}}) \to 
  \MOD(U^+_{\bullet,\et},\Orb_{U^+_{\bullet,\et}})$ is exact. Moreover since $f$ is affine, it has 
  cohomological dimension $\leq 0$ (Lemma \ref{lem:fcd}\itemref{lem:fcd:item:affine})
  and is concentrated. In particular, the restriction of $\RDERF g_*$ to $\DQCOH 
  (U^+_{\bullet,\et},\Orb_{U^+_{\bullet,\et}})$ factors through $\DQCOH 
  (V^+_{\bullet,\et},(\tilde{f}_{\bullet,\et}^+)_*\Orb_{U^+_{\bullet,\et}})$ and is exact. It remains 
  to prove that the restriction of the adjunctions $\ID{} \Rightarrow \RDERF g_* \LDERF g^*$ and 
  $\LDERF g^*\RDERF g_* \Rightarrow \ID{}$ to complexes with quasi-coherent cohomology 
  sheaves are isomorphisms. By the exactness, it is sufficient to prove that the restrictions 
  of the underived adjunctions $\ID{} \Rightarrow g_*g^*$ and $g^*g_* \Rightarrow \ID{}$ 
  to quasi-coherent modules are isomorphisms. Unwinding the definitions, this is just the 
  assertion that $\QCOH(X) \cong \QCOH(Y_{\lisset}, f_*\Orb_X)$. By smooth descent, we 
  may reduce to the situation where $X$ and $Y$ are affine schemes. The result now 
  follows from \cite[\S II.1.4]{EGA}.  
\end{proof}
\begin{corollary}\label{C:qaff_cons}
  Let $f\colon X \to Y$ be a quasi-affine morphism of algebraic stacks and let $M \in 
  \DQCOH(X)$. If $\RDERF \QCPSH{f}M \simeq 0$, then $M \simeq 0$, that is, $\RDERF 
  \QCPSH{f}$ is conservative.
\end{corollary}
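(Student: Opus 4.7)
The plan is to reduce to two cases by factoring the quasi-affine morphism $f$ as $X \xrightarrow{j} \bar X \xrightarrow{\bar f} Y$, where $j$ is a quasi-compact open immersion and $\bar f$ is affine. Since $\RDERF \QCPSH{f} \simeq \RDERF \QCPSH{\bar f} \circ \RDERF \QCPSH{j}$, it suffices to prove that each of $\RDERF \QCPSH{j}$ and $\RDERF \QCPSH{\bar f}$ is conservative.

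For the open immersion $j$: since $j$ is a monomorphism the diagonal $X \to X \times_{\bar X} X$ is an isomorphism, so the square with all four corners equal to $X$ and both pairs of parallel arrows equal to $j$ (and $\mathrm{id}$) is $2$-cartesian. The morphism $j$ is flat and, being representable, is concentrated by Lemma~\ref{lem:conc}\itemref{lem:conc:item:rep}. Flat base change (Theorem~\ref{thm:fcd_coprod}\itemref{thm:fcd_coprod:item:fl_bc}) then yields a natural isomorphism $\LDERF \QCPBK{j}\, \RDERF \QCPSH{j} M \simeq M$ for every $M \in \DQCOH(X)$, so the vanishing of $\RDERF \QCPSH{j} M$ immediately forces $M \simeq 0$.

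For the affine morphism $\bar f$: by Lemma~\ref{lem:fcd}\itemref{lem:fcd:item:affine} it has cohomological dimension $\leq 0$, and it is concentrated (Lemma~\ref{lem:conc}\itemref{lem:conc:item:rep}). Theorem~\ref{thm:fcd_coprod}\itemref{thm:fcd_coprod:item:ra_coincide} identifies $\RDERF \QCPSH{\bar f}$ with the restriction of $\RDERF \MODPSH{\bar f}$, and the hypercohomology spectral sequence collapses (all higher derived functors vanish on $\QCOH$) to give $\COHO{q}(\RDERF \QCPSH{\bar f} N) \cong \bar f_* \COHO{q}(N)$ for every $q$ and every $N \in \DQCOH(\bar X)$. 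The underived functor $\bar f_* \colon \QCOH(\bar X) \to \QCOH(Y)$ is faithful: by Corollary~\ref{C:aff_equiv_der} it corresponds, under the equivalence $\bar f^{\,*}$, to the forgetful functor $\QCOH(Y_{\lisset}, \bar f_*\Orb_{\bar X}) \to \QCOH(Y)$, which manifestly reflects the zero object. Hence $\RDERF \QCPSH{\bar f} N \simeq 0$ forces $\bar f_* \COHO{q}(N) = 0$ for every $q$, and therefore $\COHO{q}(N) = 0$ for every $q$, i.e.\ $N \simeq 0$.

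The only real subtlety is the conservativity in the affine case: this is not formal from Theorem~\ref{thm:fcd_coprod} alone, and depends on the equivalence of Corollary~\ref{C:aff_equiv_der}, which identifies $\QCOH(\bar X)$ with quasi-coherent modules over the sheaf of algebras $\bar f_*\Orb_{\bar X}$ and hence forces faithfulness of $\bar f_*$. The open-immersion case, by contrast, is handled cheaply by flat base change.
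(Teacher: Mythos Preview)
Your proof is correct and follows the same outline as the paper: factor $f$ as an open immersion followed by an affine morphism, handle the open immersion via flat base change along the diagonal, and handle the affine case using Corollary~\ref{C:aff_equiv_der}. The only difference is in the affine step: the paper invokes Corollary~\ref{C:aff_equiv_der} directly, observing that under the equivalence $\DQCOH(\bar X)\simeq \DQCOH(Y_{\lisset},\bar f_*\Orb_{\bar X})$ the functor $\RDERF\QCPSH{\bar f}$ becomes the forgetful functor to $\DQCOH(Y)$, which is manifestly conservative; you instead pass through cohomology sheaves via the collapsed hypercohomology spectral sequence (using cohomological dimension $\leq 0$ and Theorem~\ref{thm:fcd_coprod}\itemref{thm:fcd_coprod:item:uni_bd} to justify this for unbounded complexes) and then appeal to faithfulness of the underived $\bar f_*$ on $\QCOH$. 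Both routes are valid and rest on the same lemma.
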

\begin{proof}
  We may factor $f$ as $X \xrightarrow{j} X' \xrightarrow{f'} Y$, where $j$ is a 
  quasi-compact open immersion and $f'$ is an affine morphism. By Corollary 
  \ref{C:aff_equiv_der}, the result is true for $f'$. Hence, we are reduced to the situation 
  where $f$ is a quasi-compact open immersion. In this case, however, $\Delta_{X/Y} \colon 
  X \to X\times_Y X$ is an isomorphism. By Theorem 
  \ref{thm:fcd_coprod}\itemref{thm:fcd_coprod:item:fl_bc} $\LDERF 
  \QCPBK{f}\RDERF \QCPSH{f}M \simeq M$ and the result follows.
\end{proof}

\section{Triangulated categories}\label{sec:tri_cats}
In this section we will recall some results on triangulated categories
that may not be familiar to everyone. For excellent and comprehensive
treatments of these topics see~\cite{MR1191736} and
\cite[\S2]{MR1436741}.
In particular, we will recall \emph{thick} and \emph{localizing} triangulated
subcategories. This leads to the concept of \emph{compact} objects and Thomason's
localization theorem.

Throughout this section, let $\mathscript{S}$ be a triangulated category with shift operator
$\Sigma$.

A functor $F \colon \mathscript{S} \to \mathscript{S}'$ between triangulated
categories is \fndefn{triangulated} if $F$ sends triangles to
triangles and is compatible with shifts. We say that a full
subcategory $\mathscript{R}$ of $\mathscript{S}$ is \fndefn{triangulated} if
the category $\mathscript{R}$ is triangulated and the inclusion functor
$\mathscript{R} \to \mathscript{S}$ is triangulated. A subcategory
$\mathscript{R} \subset \mathscript{S}$ is \fndefn{thick} (also known as
\'epaisse or saturated) if it is full, triangulated, and every
$\mathscript{S}$-direct summand of every $r\in \mathscript{R}$ belongs to
$\mathscript{R}$.
\begin{example}
  For a triangulated functor $F \colon \mathscript{S} \to \mathscript{S}'$ we
  denote by $\ker F$ the full triangulated subcategory consisting of
  those $x\in \mathscript{S}$ such that $F(x) \homotopic 0$. The
  subcategory $\ker F \subset \mathscript{S}$ is thick.
\end{example}
\begin{example}\label{ex:thick_2_3}
  Given triangulated subcategories $\mathscript{R}_1 \subset
  \mathscript{R}_2 \subset \mathscript{S}$ such that $\mathscript{R}_1$ is a
  thick subcategory of $\mathscript{S}$, then
  $\mathscript{R}_1$ is a thick subcategory $\mathscript{R}_2$.
\end{example}
Kernels of triangulated functors produce
essentially all thick subcategories \cite[\S1.3]{MR1436741}. Indeed,
for every thick subcategory $\mathscript{R} \subset \mathscript{S}$ there is a
\fndefn{quotient} functor $Q \colon \mathscript{S} \to  
\mathscript{S}/\mathscript{R}$ such that $\mathscript{R} \cong \ker
Q$, $Q$ is essentially surjective, and satisfies a universal property \cite[Thm.~2.1.8]{MR1812507}.

For a class $R \subset \mathscript{S}$ the \fndefn{thick 
  closure} of $R$ is the smallest thick 
subcategory $\mathscript{R} \subset \mathscript{S}$ containing
$R$. A subcategory $\mathscript{R} \subset
\mathscript{S}$ is \fndefn{dense} if it is full, triangulated, and its
thick closure coincides with $\mathscript{S}$.

If the triangulated category $\mathscript{S}$ is essentially
small, then there is a notion of $\KGRP(\mathscript{S})$: it is
the free abelian group on the \emph{set} of isomorphism classes of
objects in $\mathscript{S}$ modulo the relation that given an
$\mathscript{S}$-triangle $s_1 \to s_2 \to s_3$, then $[s_2] = [s_1] +
[s_3]$. It is easy to see that for $s$, $t\in \mathscript{S}$ then
$[s\oplus t] = [s] + [t]$ and $[s] = -[\Sigma s]$. Also, for every
$\sigma \in \KGRP(\mathscript{S})$, there exists $s\in \mathscript{S}$
such that $\sigma = [s]$. Given a triangulated functor $F \colon
\mathscript{S} \to \mathscript{S}'$ between essentially small triangulated
categories, there is an induced group homomorphism $\KGRP(F)\colon
\KGRP(\mathscript{S}) \to \KGRP(\mathscript{S}')$. The following is a
nice result of A. Neeman \cite[Cor.\ 0.10]{MR1191736} (also see \cite[5.2.2]{MR1106918}
and \cite[Lem.\ 2.2]{MR1436741}).  
\begin{lemma}\label{lem:neeman_k_grps}
  Let $\mathscript{S}$ be an essentially small triangulated category and
  let $\mathscript{R}$ be a \emph{dense} subcategory. If $s\in \mathscript{S}$, then
  $s \in \mathscript{R}$ if and only if $s$ belongs to the image of
  $\KGRP(\mathscript{R})$ in $\KGRP(\mathscript{S})$. In particular, if $s\in
  \mathscript{S}$, then $s\oplus \Sigma s \in \mathscript{R}$. 
\end{lemma}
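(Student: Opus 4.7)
The forward implication is immediate, since $[s]$ is tautologically in the image of $\KGRP(\mathscript{R})\to\KGRP(\mathscript{S})$ whenever $s \in \mathscript{R}$. Likewise, the ``in particular'' clause will follow at once from the main equivalence: $[s\oplus \Sigma s] = [s] + [\Sigma s] = [s] - [s] = 0$, which is trivially in the image of $\KGRP(\mathscript{R})$.

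For the non-trivial direction, I would first reduce to a single relation. Writing $[s] = \sum_i n_i[r_i]$ with $r_i \in \mathscript{R}$ and $n_i \in \Z$, the identities $[\Sigma x] = -[x]$ and $[x]+[y] = [x\oplus y]$ collapse the right-hand side to a single class $[r]$ with $r \in \mathscript{R}$; this uses only that $\mathscript{R}$ is closed under shifts and under finite direct sums, the latter because the split triangle $x\to x\oplus y\to y\to \Sigma x$ is distinguished in $\mathscript{S}$, so any two of its terms lying in $\mathscript{R}$ force the third. Hence I may assume $[s]=[r]$ in $\KGRP(\mathscript{S})$.

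The crux is then to deduce $s \in \mathscript{R}$ from this $K_0$-equality alone. My approach is to invoke the Landsburg--Thomason-type description of equality in $K_0$ of a triangulated category: $[s]=[r]$ in $\KGRP(\mathscript{S})$ yields an isomorphism $s\oplus c \cong r\oplus c$ for some auxiliary $c \in \mathscript{S}$. Density of $\mathscript{R}$ means $c$ lies in the thick closure of $\mathscript{R}$, and since $\mathscript{R}$ is already a triangulated subcategory, a brief inductive unpacking of the thick-closure construction shows that every object there is a direct summand of an object of $\mathscript{R}$; thus there exist $c' \in \mathscript{S}$ and $r' \in \mathscript{R}$ with $c\oplus c' \cong r'$. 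Adding $c'$ to both sides of $s\oplus c \cong r\oplus c$ gives $s\oplus r' \cong r\oplus r'$, whose right-hand side lies in $\mathscript{R}$. Applying the split triangle $r' \to s\oplus r' \to s \to \Sigma r'$, whose first two terms now belong to $\mathscript{R}$, then forces $s \in \mathscript{R}$ by the defining closure property of a triangulated subcategory.

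The main obstacle is the Landsburg--Thomason step combined with the subsequent ``upgrade'' of the auxiliary object $c$ from $\mathscript{S}$ to an object of $\mathscript{R}$; both rely crucially on the density hypothesis and on the fact that $\mathscript{R}$ itself (not merely its thick closure) is triangulated, so that closures under shifts, direct sums, and the third-vertex axiom for triangles are all available when we finally land back inside $\mathscript{R}$.
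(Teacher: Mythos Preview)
The paper does not give its own proof of this lemma; it merely cites Neeman \cite[Cor.~0.10]{MR1191736} and Thomason \cite[Lem.~2.2]{MR1436741}. So the comparison is between your argument and the standard one in those references.

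Your argument has a genuine gap at the ``Landsburg--Thomason'' step. The claim that $[s]=[r]$ in $\KGRP(\mathscript{S})$ forces an isomorphism $s\oplus c \cong r\oplus c$ for some $c\in\mathscript{S}$ is false for triangulated categories. A concrete counterexample: take $\mathscript{S}=\DCAT^b(\mathrm{vect}_k)$, the bounded derived category of finite-dimensional $k$-vector spaces, and let $s=k\oplus k[1]$, $r=0$. Then $[s]=[k]-[k]=0=[r]$, but $s\oplus c\cong c$ would require $H^0(c)\oplus k\cong H^0(c)$, impossible for finite-dimensional cohomology. The point is that $\KGRP$ of a triangulated category is defined using \emph{all} distinguished triangles, not just split ones, so equality in $\KGRP$ is strictly weaker than stable isomorphism.

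Thomason's actual Lemma~2.2 gives a more delicate characterization: $[A]=[B]$ in $\KGRP(\mathscript{S})$ if and only if there exist objects $C,D$ and distinguished triangles $A\to C\to D\to\Sigma A$ and $B\to C\to D\to\Sigma B$ (same middle terms, different outer maps). The passage from this to the conclusion uses density to arrange that $C$ and $D$ may be taken in $\mathscript{R}$, whereupon the first triangle and closure of $\mathscript{R}$ under cones force $A\in\mathscript{R}$. Your reduction to $[s]=[r]$ with $r\in\mathscript{R}$ and your final ``two-out-of-three'' step are fine; what is missing is a correct description of what $[s]=[r]$ actually buys you, and that description necessarily involves triangles rather than a bare stable isomorphism.
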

A pair of triangulated functors $\mathscript{R} \to
\mathscript{S} \xrightarrow{F} {\mathscript{T}}$ is \fndefn{left-exact}
(resp.\ \fndefn{almost exact}, resp.\ \fndefn{exact}) if 
$\mathscript{R}$ is a thick subcategory of $\mathscript{S}$ via the functor $\mathscript{R} \to \mathscript{S}$ and the
functor $F \colon \mathscript{S} \to {\mathscript{T}}$ factors through the
quotient $\bar{F} \colon \mathscript{S}/\mathscript{R} \to {\mathscript{T}}$ and
this functor is fully faithful (resp.\ fully faithful and dense, resp.\ an
equivalence). The following (well-known) Lemma will be useful.
\begin{lemma}\label{lem:identify_quots}
  Let $F \colon \mathscript{S} \to
  \mathscript{T}$ be a triangulated functor. If $F$ has a right
  adjoint $G \colon \mathscript{T} \to \mathscript{S}$ such that the adjunction
  $\epsilon\colon FG \to \ID{\mathscript{T}}$ is an isomorphism, then the sequence $\ker
  F \to \mathscript{S} \xrightarrow{F} \mathscript{T}$ is
  exact.
\end{lemma}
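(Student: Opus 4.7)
The plan is to show that the induced functor $\bar f \colon \mathscript{S}/\ker f \to \mathscript{T}$ is an equivalence, with the composite $q \circ g \colon \mathscript{T} \to \mathscript{S}/\ker f$ serving as a quasi-inverse. The factorization $\bar f \circ q \homotopic f$ is automatic from the universal property of the Verdier quotient, since $f$ annihilates $\ker f$ by definition. Thus the content is to produce natural isomorphisms $\bar f \circ (q\circ g) \homotopic \ID{\mathscript{T}}$ and $(q\circ g) \circ \bar f \homotopic \ID{\mathscript{S}/\ker f}$.

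The first isomorphism is immediate: $\bar f \circ (q\circ g) \homotopic f\circ g \homotopic \ID{\mathscript{T}}$ is exactly the hypothesis on the counit $\epsilon \colon fg \to \ID{\mathscript{T}}$. The key technical step, which I would carry out next, is to prove that for every $s \in \mathscript{S}$ the cone of the unit $\eta_s \colon s \to gf(s)$ lies in $\ker f$. To see this, complete $\eta_s$ to a distinguished triangle
\[
s \to gf(s) \to C \to \Sigma s
\]
and apply $f$. The triangle identities for the adjunction $(f,g)$ state that the composite $f(s) \xrightarrow{f(\eta_s)} fgf(s) \xrightarrow{\epsilon_{f(s)}} f(s)$ equals the identity; since $\epsilon_{f(s)}$ is invertible by hypothesis, so is $f(\eta_s)$. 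Therefore $f(C) \homotopic 0$, and $C \in \ker f$ as claimed.

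With this observation in hand, the second isomorphism follows: for each $s \in \mathscript{S}$, the morphism $q(\eta_s) \colon q(s) \to q(gf(s)) = (q\circ g)(\bar f(q(s)))$ becomes an isomorphism in $\mathscript{S}/\ker f$, because the Verdier quotient inverts precisely those morphisms whose cones lie in $\ker f$. Naturality of $\eta$ assembles these into a natural isomorphism $\ID{\mathscript{S}/\ker f} \homotopic (q\circ g) \circ \bar f$. The main obstacle is really the triangle-identity step showing that the cone of $\eta_s$ lies in $\ker f$; once that is established, the rest is a formal consequence of the defining property of the Verdier quotient and the hypothesis on the counit.
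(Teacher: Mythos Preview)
Your proof is correct and follows essentially the same route as the paper: both show that $\bar f$ is an equivalence with quasi-inverse $q\circ g$ by verifying that the counit is an isomorphism (by hypothesis) and that the cone of the unit $\eta_s$ lies in $\ker f$, so that $\eta_s$ becomes invertible in the quotient. The only cosmetic difference is that the paper writes the cone of $\eta_s$ as a fiber $s'\to s\to gf(s)$ rather than a cofiber, and is slightly terser about why $f(\eta_s)$ is an isomorphism; your explicit invocation of the triangle identity is a welcome clarification.
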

\begin{proof}
  We will show that $F$ satisfies the universal property of the
  quotient. Let $P \colon \mathscript{S} \to \mathscript{P}$ be a
  triangulated functor such that $\ker F\subset \ker P$.
  We must prove that there is a functor
  $P' \colon \mathscript{T} \to \mathscript{P}$ and an isomorphism
  $\alpha \colon P\simeq P'F$ unique up to unique isomorphism.
  For the uniqueness, let $P'_1$, $P'_2$ be two such functors with isomorphisms
  $\alpha_i \colon P\simeq P'_iF$. Then the isomorphism
  $\alpha_2\alpha_1^{-1}\colon P'_1F\simeq P'_2F$ induces a unique isomorphism
  $P'_1\simeq P'_1FG\simeq P'_2FG\simeq P'_2$ compatible with the $\alpha_i$.
  For the existence, set $P' = PG$ and let $\alpha=P_*\eta$, where
  $\eta\colon \ID{\mathscript{S}}\to GF$ is the unit of the adjunction. Now
  $F_*\eta$ is an isomorphism since $\epsilon$ is an isomorphism,
  so $\alpha=P_*\eta$ is an
  isomorphism since $\ker F\subset \ker P$.
\end{proof}
A triangulated category is said to be \fndefn{closed under small
  coproducts} if 
it admits small categorical coproducts and small coproducts of
triangles remain triangles. If the triangulated category $\mathscript{S}$
is closed under small coproducts, then we say that a
subcategory $\mathscript{R} \subset \mathscript{S}$ is \fndefn{localizing}
if it is a full triangulated subcategory, closed under small coproducts,
and the functor $\mathscript{R} \to \mathscript{S}$ preserves small 
coproducts. For a class 
$R \subset \mathscript{S}$, where $\mathscript{S}$ 
is closed under small coproducts, there is a smallest
subcategory $\mathscript{R}\subset \mathscript{S}$ that is localizing and
contains $R$. We refer to $\mathscript{R}$ as the \fndefn{localizing envelope}
of $R$.  
\begin{example}
  If a subcategory $\mathscript{R} \subset \mathscript{S}$ is localizing,
  then it is thick. Indeed, given $r\in \mathscript{R}$ and $r\homotopic
  r'\oplus 
  r''$ in $\mathscript{S}$, the Eilenberg swindle produces an
  $\mathscript{S}$-isomorphism: 
  \[
  r'' \oplus r \oplus r \oplus \cdots \homotopic r \oplus r
  \oplus r \oplus \cdots.
  \]
  Since $\mathscript{R}$ is localizing, $r'' \oplus r\oplus r
  \oplus \cdots \in \mathscript{R}$. The cone of the natural morphism
  $r\oplus r \oplus \cdots \to r'' \oplus r \oplus r \oplus \cdots$ is
  $r''$; thus $r'' \in \mathscript{R}$. 
\end{example}
A result of A. Neeman \cite[Prop.~1.9]{MR1191736} says that if a
subcategory $\mathscript{R} \subset \mathscript{S}$ is localizing, then the
quotient $Q\colon \mathscript{S} \to \mathscript{S}/\mathscript{R}$ preserves small coproducts. In
particular, since the quotient is essentially surjective, the
category $\mathscript{S}/\mathscript{R}$ is closed under small coproducts. 
\begin{example}
  If $F \colon \mathscript{S} \to \mathscript{S}'$ is a triangulated functor that
  preserves small coproducts, then the subcategory $\ker F$ is localizing. 
\end{example}
An object $s\in \mathscript{S}$ is \fndefn{compact} if the functor
$\Hom_{\mathscript{S}}(s,-)$ preserves small coproducts. Denote by
$\mathscript{S}^c$ the 
full subcategory of compact objects of $\mathscript{S}$. 
\begin{example}\label{ex:compact_ring}
  Let $A$ be a ring. 
  A complex of $A$-modules is compact in $\DCAT(A)$ if and only if it
  is quasi-isomorphic to a bounded complex of projective $A$-modules
  \cite[\spref{07LT}]{stacks-project}.
  That is, the compact objects of $\DCAT(A)$ are the \fndefn{perfect} complexes of $A$-modules. 
\end{example}
\begin{example}\label{ex:pres_cpt_adj}
  Let $F \colon \mathscript{S} \to \mathscript{S}'$ be a triangulated functor
  that admits a right adjoint $G \colon \mathscript{S}' \to \mathscript{S}$. If
  $G$ preserves small coproducts, then $F$ sends $\mathscript{S}^c$ to
  $\mathscript{S}'^c$ \cite[Thm.~5.1~``$\Rightarrow$'']{MR1308405}. 
\end{example}
\begin{example}\label{ex:pres_cpt_conc}
  If $f\colon X' \to X$ is a concentrated morphism of algebraic stacks, then $\LDERF 
  \QCPBK{f}$ sends $\DQCOH(X)^c$ to $\DQCOH(X')^c$. This follows by combining Example \ref{ex:pres_cpt_adj} with Theorem \ref{thm:fcd_coprod}\itemref{thm:fcd_coprod:item:sm_cp}.
\end{example}
A class $S \subset
\mathscript{S}$ is \fndefn{generating} if given $x\in \mathscript{S}$ such
that $\Hom_{\mathscript{S}}(\Sigma^ns,x) = 0$ for all $s\in S$ and $n\in \Z$,
then $x\homotopic 0$. The triangulated category $\mathscript{S}$ is
\fndefn{compactly generated} if it admits a \emph{set} of generators
consisting of compact objects.
\begin{example}\label{ex:compact_ring_gen}
  Let $A$ be a ring. Denote the unbounded derived category of
  $A$-modules by $\DCAT(A)$. Then the set $\{A\}$ compactly generates
  $\DCAT(A)$. Hence
  $\DCAT(A)$ is compactly generated.  
\end{example}
\begin{example}\label{ex:compact_cons_gen}
  This is a refinement of Example \ref{ex:pres_cpt_adj}. Let $F \colon \mathscript{S} \to \mathscript{S}'$ be a triangulated functor
  that admits a right adjoint $G \colon \mathscript{S}' \to \mathscript{S}$ that preserves 
  small coproducts. In addition, assume that 
  $G$ is conservative (i.e., $G(x)\homotopic 0$ implies $x\homotopic 0$). If $\mathscript{S}$ is compactly generated by a class $S$, then $\mathscript{S}'$ is compactly generated by the class $\{F(s) \suchthat s \in S\}$. 
\end{example}

We now recall Thomason's Localization Theorem, which was
proved in this generality by A. Neeman \cite[Thm. 2.1]{MR1191736,MR1308405} (also see \cite[5.1]{MR1106918}).
\begin{theorem}[Thomason's Localization]\label{thm:thomason}
  Consider an exact sequence of
  triangulated categories $\mathscript{R}
  \to \mathscript{S} \xrightarrow{F} \mathscript{T}$ that are
  closed under small 
  coproducts. If the triangulated category $\mathscript{S}$ is compactly
  generated and $\mathscript{R}$ is the localizing envelope of a subset $R
  \subset \mathscript{S}^c$, then there is an induced sequence 
  $\mathscript{R}^c \to \mathscript{S}^c \to \mathscript{T}^c$ which is almost
  exact. In particular, $\mathscript{R}^c
  = \mathscript{S}^c \cap \mathscript{R}$ and $\mathscript{R}^c$ is the thick
  closure of $R$. 
\end{theorem}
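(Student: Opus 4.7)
The plan is to follow Neeman's strategy: use Brown representability to produce right adjoints to both $i$ and $q$, verify that these adjoints preserve small coproducts so that $i$ and $q$ themselves preserve compactness, and then establish the almost-exactness of the resulting sequence on compacts by a Hom-computation for fully faithfulness together with Thomason's lifting argument for density.

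First I would set up the adjoints. Since $i$ preserves small coproducts by definition of a localizing subcategory, any object of $R$, being compact in $\mathscript{S}$, remains compact in $\mathscript{R}$; hence $R \subset \mathscript{R}^c$ and $\mathscript{R}$ is itself compactly generated. Brown representability provides right adjoints $i^R$ and $j$ to $i$ and $q$ respectively, with $qj \homotopic \ID{\mathscript{T}}$ since $q$ is a Verdier quotient. A direct adjunction computation, using compactness in $\mathscript{S}$ of the generators of $\mathscript{R}$, shows that both $i^R$ and $j$ preserve small coproducts. For $j$ specifically: the cone $C$ of the comparison map $\coprod j(t_\lambda) \to j(\coprod t_\lambda)$ becomes zero under $q$, so $C \in \mathscript{R}$; since $\Hom_{\mathscript{S}}(\Sigma^n r, j(t)) = \Hom_{\mathscript{T}}(\Sigma^n q(r), t) = 0$ for $r \in R$ (because $q(r) = 0$), compactness of $r$ gives $\Hom_{\mathscript{S}}(\Sigma^n r, C) = 0$ for all $r \in R$ and $n \in \Z$, forcing $C \homotopic 0$ since $R$ generates $\mathscript{R}$. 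Applying Example~\ref{ex:pres_cpt_adj} to both $i$ and $q$ makes the sequence $\mathscript{R}^c \to \mathscript{S}^c \to \mathscript{T}^c$ well-defined.

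Next I would record the structural equalities. The thick closure $\bar{R}$ of $R$ in $\mathscript{S}$ lies inside both $\mathscript{S}^c$ and $\mathscript{R}^c$ by thickness; once density on the quotient is established, applying the almost-exact conclusion to the trivial sequence $\mathscript{R} \to \mathscript{R} \to 0$ forces $\mathscript{R}^c = \bar{R}$. This simultaneously yields $\mathscript{R}^c \subset \mathscript{S}^c \cap \mathscript{R}$ and that $\mathscript{R}^c$ is the thick closure of $R$; the reverse inclusion $\mathscript{S}^c \cap \mathscript{R} \subset \mathscript{R}^c$ was observed in the previous paragraph.

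The main obstacle is the almost-exactness of $\mathscript{R}^c \to \mathscript{S}^c \to \mathscript{T}^c$. For fully faithfulness of the induced $\bar{q} \colon \mathscript{S}^c/\bar{R} \to \mathscript{T}^c$, given $s, s' \in \mathscript{S}^c$ the colocalization triangle $i i^R(s') \to s' \to j q(s')$ together with the adjunction $\Hom_{\mathscript{T}}(qs, qs') \cong \Hom_{\mathscript{S}}(s, jq(s'))$ identifies $\Hom_{\mathscript{T}}(qs, qs')$ with morphisms $s \to s'$ in $\mathscript{S}$ modulo those factoring through $i i^R(s') \in \mathscript{R}$; compactness of $s$ lets one replace $i i^R(s')$ by an object of $\bar{R}$, matching the Verdier-quotient description of $\Hom_{\mathscript{S}^c/\bar{R}}(s, s')$. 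For density, I would invoke the Thomason--Neeman lifting trick: write $j(t) \in \mathscript{S}$ as a homotopy colimit of compact objects (possible since $\mathscript{S}$ is compactly generated), apply $q$ to express $t \homotopic qj(t)$ as a homotopy colimit of objects $q(s_\alpha)$ with $s_\alpha \in \mathscript{S}^c$, and use compactness of $t$ in $\mathscript{T}$ to factor the identity $t \to t$ through some $q(s)$ with $s \in \mathscript{S}^c$. This presents $t$ as a direct summand of an object in the essential image of $\bar{q}$, whose thick closure is therefore all of $\mathscript{T}^c$. Lemma~\ref{lem:neeman_k_grps} then explains why ``direct summand'' is the best one can hope for: this is precisely the reason the sequence is ``almost exact'' rather than ``exact''. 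The lifting step is the crux; everything else is adjunction bookkeeping.
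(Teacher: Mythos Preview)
The paper does not prove this theorem: it is stated and attributed to Neeman \cite[Thm.~2.1]{MR1191736,MR1308405} without argument. Your sketch correctly reconstructs Neeman's proof---Brown representability for the adjoints, the coproduct-preservation check via the generating set $R$, and the lifting argument for density---so there is nothing to compare against in the paper itself.

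One point of imprecision worth tightening: your description of the fully-faithfulness step says the colocalization triangle ``identifies $\Hom_{\mathscript{T}}(qs, qs')$ with morphisms $s \to s'$ in $\mathscript{S}$ modulo those factoring through $i i^R(s')$.'' The long exact sequence from that triangle also contributes terms from $\Hom_{\mathscript{S}}(s, i i^R(s')[1])$, so the identification is not a simple cokernel. The actual argument (as in Neeman) writes $i i^R(s')$ as a filtered homotopy colimit of objects of $\bar{R}$ and uses compactness of $s$ to pass the $\Hom$ through the colimit; this matches the calculus-of-fractions description of $\Hom_{\mathscript{S}^c/\bar R}(s,s')$ term by term, not just at the level of a single quotient. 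Your phrase ``compactness of $s$ lets one replace $i i^R(s')$ by an object of $\bar{R}$'' is the right idea but hides exactly this computation. Similarly, the density step requires knowing that in a compactly generated category every object is a \emph{sequential} homotopy colimit of compacts (so that compactness of $t$ genuinely lets the identity factor through a finite stage); you gesture at this but a reader would want the reference.
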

Combining Theorem \ref{thm:thomason} with the elementary Lemma
\ref{lem:neeman_k_grps} produces something very surprising, which was
observed by A. Neeman \cite[Cor. 0.9]{MR1191736}.
\begin{corollary}\label{cor:thomason_lift}
  In Theorem \ref{thm:thomason} assume that the category
  $\mathscript{T}^c$ is essentially small. Then for every $t\in
  \mathscript{T}^c$, there exists an $s\in \mathscript{S}^c$ and an isomorphism
  $t\oplus \Sigma t \homotopic F(s)$. 
\end{corollary}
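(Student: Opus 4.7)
The plan is to combine Thomason's Localization Theorem (Theorem \ref{thm:thomason}) with Neeman's $\KGRP$-criterion for dense subcategories (Lemma \ref{lem:neeman_k_grps}), and observe that in $\KGRP$ the class $[t] + [\Sigma t]$ vanishes. The obstruction that any single $t\in \mathscript{T}^c$ could fail to lift to a compact object in $\mathscript{S}$ is precisely measured by the $\KGRP$-image, and this obstruction disappears after adding $\Sigma t$.

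More precisely, by Theorem \ref{thm:thomason} the sequence $\mathscript{R}^c \to \mathscript{S}^c \to \mathscript{T}^c$ is almost exact, so the induced functor $\bar{q} \colon \mathscript{S}^c/\mathscript{R}^c \to \mathscript{T}^c$ is fully faithful and dense. Let $\mathscript{D} \subset \mathscript{T}^c$ denote the essential image of $\bar{q}$, equivalently the essential image of $q\vert_{\mathscript{S}^c}$; then $\mathscript{D}$ is a dense triangulated subcategory of $\mathscript{T}^c$. Note that $\mathscript{T}^c$ is essentially small by hypothesis, so $\KGRP(\mathscript{T}^c)$ and $\KGRP(\mathscript{D})$ make sense.

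Now fix $t \in \mathscript{T}^c$. In $\KGRP(\mathscript{T}^c)$ we have
\[
[t \oplus \Sigma t] \;=\; [t] + [\Sigma t] \;=\; [t] - [t] \;=\; 0,
\]
using the relations recorded just before Lemma \ref{lem:neeman_k_grps}. In particular $[t \oplus \Sigma t]$ lies trivially in the image of the homomorphism $\KGRP(\mathscript{D}) \to \KGRP(\mathscript{T}^c)$. Applying Lemma \ref{lem:neeman_k_grps} to the dense subcategory $\mathscript{D} \subset \mathscript{T}^c$, we conclude that $t \oplus \Sigma t$ lies in $\mathscript{D}$.

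Unwinding the definition of $\mathscript{D}$, this produces an object $s \in \mathscript{S}^c$ together with an isomorphism $q(s) \simeq t \oplus \Sigma t$ in $\mathscript{T}^c$, which is the assertion of the corollary. There is no real obstacle to overcome: the entire content of the argument is the observation that $[\Sigma t] = -[t]$ in $\KGRP$, after which the two previously cited results do all of the work.
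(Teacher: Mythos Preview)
Your proof is correct and follows exactly the approach the paper indicates: the paper does not give a detailed argument, but states that the corollary is obtained by ``combining Theorem~\ref{thm:thomason} with the elementary Lemma~\ref{lem:neeman_k_grps}'', and your write-up spells this out precisely. Note that the ``In particular'' clause of Lemma~\ref{lem:neeman_k_grps} already records the conclusion $t\oplus\Sigma t\in\mathscript{D}$ for any dense subcategory, so your $\KGRP$ computation is exactly the intended mechanism.
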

Another useful Corollary is the following \cite[Thm.~2.1.2]{MR1308405}.
\begin{corollary}\label{cor:thomason_gens}
  In Theorem \ref{thm:thomason} suppose that $R$ is a generating set
  for $\mathscript{S}$, then $\mathscript{R} = \mathscript{S}$. 
\end{corollary}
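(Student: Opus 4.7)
The plan is to show that the quotient category $\mathscript{T} = \mathscript{S}/\mathscript{R}$ is zero. Since by construction of the Verdier quotient $\mathscript{R} \cong \ker q$, the vanishing of $\mathscript{T}$ immediately yields $\mathscript{R} = \mathscript{S}$.

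The first step is to produce a triangulated right adjoint $j \colon \mathscript{T} \to \mathscript{S}$ to $q$ with $qj \homotopic \ID{\mathscript{T}}$. Since $\mathscript{R}$ is the localizing envelope of a subset of $\mathscript{S}^c$, the quotient $q$ preserves small coproducts by the result of Neeman quoted just before Example~\ref{ex:compact_ring}. Because $\mathscript{S}$ is compactly generated, Brown representability (which is already part of the Bousfield localization package underlying Theorem~\ref{thm:thomason}) supplies the desired $j$; the fact that $\ker q = \mathscript{R}$ then forces the counit $qj \Rightarrow \ID{\mathscript{T}}$ to be an isomorphism, as in Lemma~\ref{lem:identify_quots}.

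The remainder of the argument is then a short calculation. Fix $t \in \mathscript{T}$; I want to show $t \homotopic 0$. For every $r \in R$, the inclusion $r \in \mathscript{R} = \ker q$ gives $q(r) \homotopic 0$, hence by adjunction
\[
\Hom_{\mathscript{S}}(\Sigma^n r, j(t)) \cong \Hom_{\mathscript{T}}(\Sigma^n q(r), t) = 0
\]
for every integer $n$ and every $r \in R$. Since $R$ is a generating set for $\mathscript{S}$, this forces $j(t) \homotopic 0$, and therefore $t \homotopic q(j(t)) \homotopic 0$, as desired.

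The only nontrivial ingredient is the existence and full faithfulness of the right adjoint $j$; once this standard Bousfield-localization input is granted, the conclusion follows formally from the hypothesis that the elements of $R$ generate $\mathscript{S}$ and are annihilated by $q$. I therefore expect the main obstacle, if any, to be purely bookkeeping: making sure that the right adjoint to $q$ is available in the generality in which Theorem~\ref{thm:thomason} is stated, rather than any deeper difficulty in the argument itself.
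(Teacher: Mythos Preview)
Your argument is correct and is essentially the standard Bousfield-localization proof that the paper implicitly invokes by citing Neeman~\cite[Thm.~2.1.2]{MR1308405}; the paper gives no independent proof. One small point of exposition: your appeal to Lemma~\ref{lem:identify_quots} for the isomorphism $qj\simeq\ID{\mathscript{T}}$ is backwards, since that lemma \emph{assumes} the counit is an isomorphism rather than deducing it---but as you rightly say, full faithfulness of the right adjoint to a Verdier quotient is part of the standard localization package, so only the citation needs adjusting, not the argument.
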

\section{Perfect complexes, projection formulas, and finite duality}
We now use the results of the previous section to prove some useful results 
for derived categories of algebraic stacks. We begin with the notion of a perfect complex on
an algebraic stack.

\subsection{Perfect complexes}
We recall some notions from \cite[Exp.~II]{MR0354655} (also see \cite[Tag \spref{08FK}]{stacks-project}). If $A$ is a ring, then a complex of $A$-modules $P$ is \emph{strictly perfect} 
if it is a bounded complex of finitely generated and projective $A$-modules. More 
generally, if $A$ is a sheaf of rings on a site $E$, then a complex $P=(P^k)$
of 
$A$-modules is \emph{strictly perfect} if it is a bounded complex and
each term $P^k$ is 
a direct summand of a finite free $A$-module. A complex $P \in \DCAT(A)$ is \emph{perfect} if it is locally strictly perfect.

Let $X$ be an algebraic stack. Then a complex $\cplx{P}$ on $X$ is 
\fndefn{perfect} if it is a perfect object of $\DCAT(X)$. Note that all perfect complexes on $X$ belong to $\DQCOH(X)$. The following lemma provides a useful criterion for perfection.
\begin{lemma}\label{L:perf_wd}
  Let $X$ be an algebraic stack and let $\cplx{P} \in \DQCOH(X)$. The following conditions 
  are equivalent.
  \begin{enumerate}
  \item $\cplx{P}$ is perfect; and
  \item for every $x\in |X|$,
    there exists a flat morphism $f\colon \spec A \to X$ with image
    containing $x$ such that
    $\RDERF\Gamma(\spec A,\LDERF \QCPBK{f}\cplx{P})$ is a strictly perfect
    complex of $A$-modules.
  \end{enumerate}
  In particular, every perfect complex on an affine scheme is strictly perfect.
\end{lemma}
\begin{proof}
  It is sufficient to prove that if $A \to B$ is a faithfully flat ring homomorphism and $M \in 
  \DCAT(A)$,
  then $M$ is a perfect complex of $A$-modules if and only if $M\tensor_A B$ is a perfect complex of 
  $B$-modules. This is \cite[Tag \spref{068T}]{stacks-project}.
\end{proof}
\begin{example}
  Let $X$ be an algebraic stack.
  Then every flat $\Orb_X$-module of finite presentation 
  defines a perfect complex in $\DQCOH(X)$. 
  In particular, if $f \colon X \to Y$ is a morphism of algebraic stacks
  that is finite, flat, and of finite presentation, then
  $f_*\Orb_X$ is perfect in $\DQCOH(Y)$. 
\end{example}
We recall the following well-known definition (e.g., 
\cite[Defn.~4.7.1]{brandenburg_thesis} or \cite[Defn.~3.3]{MR2669705}). Let $X$ be an 
algebraic stack. An object $\cplx{P} \in \DQCOH(X)$ is \emph{dualizable} if there exists 
$\cplx{P}^* \in \DQCOH(X)$ together with morphisms $e\colon \cplx{P}^* 
\tensor^{\LDERF}_{\Orb_X} \cplx{P} \to \Orb_X$, $c\colon \Orb_X \to \cplx{P} 
\tensor^{\LDERF}_{\Orb_X} \cplx{P}^*$ such that the two induced maps:
\[
  \xymatrixrowsep{0.5pc}%
  \xymatrix{
    \Orb_X \tensor^{\LDERF}_{\Orb_X} \cplx{P} \ar[r]^-{c \tensor \ID{\cplx{P}}} & \cplx{P} \tensor^{\LDERF}_{\Orb_X} \cplx{P}^* \tensor^{\LDERF}_{\Orb_X} \cplx{P} \ar[r]^-{\ID{\cplx{P}} \tensor e} & \cplx{P}\\
        \cplx{P}^*\tensor^{\LDERF}_{\Orb_X} \Orb_X \ar[r]^-{\ID{\cplx{P}^*} \tensor c} & \cplx{P}^* \tensor^{\LDERF}_{\Orb_X} \cplx{P} \tensor^{\LDERF}_{\Orb_X} \cplx{P}^* \ar[r]^-{e\tensor \ID{\cplx{P}^*}} & \cplx{P}^*
    }
\]
are isomorphisms. It is standard that $\cplx{P}$ dualizable implies that $\cplx{P}^* 
\simeq \SRHom_{\Orb_X}^{\qcsubscript}(\cplx{P},\Orb_X)$ and the $e$ and $c$ maps are 
simply those arising from the adjunction between $-\tensor^{\LDERF}_{\Orb_X} \cplx{P}$ and 
$\SRHom_{\Orb_X}^{\qcsubscript}(\cplx{P},-)$.

The following Lemma is straightforward but crucial.
\begin{lemma}\label{L:basic_props_perfect}
  Let $X$ be an algebraic stack and let $\cplx{P} \in \DQCOH(X)$ be a perfect
  complex. 
  \begin{enumerate}
  \item\label{L:basic_props_perfect:dbl} The double duality morphism $\cplx{P} \to 
    \SRHom_{\Orb_X}(\SRHom_{\Orb_X}(\cplx{P},\Orb_X),\Orb_X)$ is a quasi-isomorphism.
  \item\label{L:basic_props_perfect:dual} The restriction of the 
    functor $\SRHom_{\Orb_X}(\cplx{P},-)\colon
    \DCAT(X) \to \DCAT(X)$ to $\DQCOH(X)$ factors through $\DQCOH(X)$
    and preserves small coproducts in $\DQCOH(X)$. Moreover, if
    $\cplx{M}\in\DQCOH(X)$, then there is a natural 
    quasi-isomorphism:
    \[
    \SRHom_{\Orb_X}(\cplx{P},\Orb_X) \tensor^{\LDERF}_{\Orb_X} \cplx{M} \simeq \SRHom_{\Orb_X}(\cplx{P},\cplx{M}).
    \]
    In particular, $\SRHom^{\qcsubscript}_{\Orb_X}(\cplx{P},-) \simeq \SRHom_{\Orb_X}(\cplx{P},-) \simeq \cplx{P}^*\otimes -$ on $\DQCOH(X)$.
  \item\label{L:basic_props_perfect:dp} If $\cplx{P} \in \DQCOH(X)$ is dualizable, then it is 
    perfect.
  \end{enumerate}
  In particular for every algebraic stack $X$, the notions of perfect and dualizable objects 
  in $\DQCOH(X)$ coincide. 
\end{lemma}
\begin{proof}
  To prove \itemref{L:basic_props_perfect:dbl}, we note that the existence of the double 
  duality morphism follows from $\SRHom$-$\tensor$ adjunction. That it is a 
  quasi-isomorphism can be verified smooth-locally on $X$, so we may assume that 
  $X=\spec A$ is an affine scheme. To prove \itemref{L:basic_props_perfect:dual}, we 
  may argue similarly to reduce to the affine setting. Now the collection
  of all $\cplx{P}$ that satisfy the conclusions of \itemref{L:basic_props_perfect:dbl} and \itemref{L:basic_props_perfect:dual} is closed
  under finite coproducts, direct summands, shifts, and the taking of
  cones, that is, it is a thick subcategory of $\DQCOH(X) \simeq \DCAT(A)$. Since $\Orb_X$ 
  satisfies the conclusions of \itemref{L:basic_props_perfect:dbl} and 
\itemref{L:basic_props_perfect:dual} and $X$ is affine, 
\itemref{L:basic_props_perfect:dbl} and \itemref{L:basic_props_perfect:dual} follow.

For \itemref{L:basic_props_perfect:dp}: since dualizable implies locally dualizable, it follows that we may assume that $X$ is affine. In this case, the result is classical. One may also argue as follows: $\cplx{Q}$ being dualizable implies that there is an isomorphism of functors $\SRHom^{\mathrm{qc}}_{\Orb_X}(\cplx{Q},-) \simeq \cplx{Q}^* \tensor^{\LDERF}_{\Orb_X} - $ from $\DQCOH(X)$ to $\DQCOH(X)$. In particular, the functor $\SRHom^{\mathrm{qc}}_{\Orb_X}(\cplx{Q},-) \colon \DQCOH(X) \to \DQCOH(X)$ preserves small coproducts. Taking global sections, we see that $\Hom_{\Orb_X}(\cplx{Q},-)$ preserves small coproducts. Hence, $\cplx{Q}$ is compact in $\DQCOH(X)$. But $X$ is affine, so $\cplx{Q}$ is perfect. 
\end{proof}
\subsection{Compact complexes}
We now move on to a description of the compact objects of $\DQCOH(X)$ and their relationship to perfect complexes.
\begin{lemma}\label{L:compact_conc}
  Let $X$ be a quasi-separated algebraic stack. 
  \begin{enumerate}
  \item \label{L:compact_conc:c_perf} If $\cplx{Q} \in \DQCOH(X)^c$, then $\cplx{Q}$ is 
    perfect. 
  \item\label{L:compact_conc:tens_c_p} If $\cplx{P}$ is perfect on $X$ and $\cplx{Q} \in \DQCOH(X)^c$, then $\cplx{Q}\tensor^{\LDERF}_{\Orb_X} \cplx{P} \in \DQCOH(X)^c$.
  \item \label{L:compact_conc:perf_conc_c} If $X$ is concentrated, then $\Orb_X \in \DQCOH(X)^c$; in particular, if $\cplx{P}$ is 
    perfect on $X$, then $\cplx{P} \in \DQCOH(X)^c$.
  \end{enumerate}
\end{lemma}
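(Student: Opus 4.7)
For part \itemref{L:compact_conc:c_perf}, the plan is to reduce to the affine case via Lemma \ref{L:perf_wd}. I would pick any smooth cover $f\colon \spec A \to X$. Since $f$ is representable it is concentrated by Lemma \ref{lem:conc}\itemref{lem:conc:item:rep}, so Theorem \ref{thm:fcd_coprod}\itemref{thm:fcd_coprod:item:sm_cp} ensures that $\RDERF\QCPSH{f}$ preserves small coproducts. By Example \ref{ex:pres_cpt_adj} its left adjoint $\LDERF\QCPBK{f}$ therefore sends compact objects to compact objects, so $\LDERF\QCPBK{f}\cplx{Q}$ is compact in $\DQCOH(\spec A)\homotopic \DCAT(A)$; this is exactly a perfect complex of $A$-modules by Example \ref{ex:compact_ring}. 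An appeal to Lemma \ref{L:perf_wd} then shows that $\cplx{Q}$ is perfect on $X$.

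For part \itemref{L:compact_conc:tens_c_p}, the derived tensor product $\cplx{Q}\tensor^{\LDERF}_{\Orb_X}\cplx{P}$ lies in $\DQCOH(X)$ (cf.\ \S\ref{sec:qc_shv}). To see compactness, I would apply the $\SRHom$--$\tensor$ adjunction \eqref{eq:ghomadj} to rewrite
\[
\Hom_{\Orb_X}(\cplx{Q}\tensor^{\LDERF}_{\Orb_X}\cplx{P},\cplx{N})\cong \Hom_{\Orb_X}(\cplx{Q},\SRHom_{\Orb_X}(\cplx{P},\cplx{N}))
\]
for $\cplx{N}\in \DQCOH(X)$. By Lemma \ref{L:basic_props_perfect}\itemref{L:basic_props_perfect:dual}, the functor $\SRHom_{\Orb_X}(\cplx{P},-)$ sends $\DQCOH(X)$ to itself and preserves small coproducts there; since $\cplx{Q}$ is compact, the outer $\Hom_{\Orb_X}(\cplx{Q},-)$ also preserves small coproducts, yielding the desired compactness of $\cplx{Q}\tensor^{\LDERF}_{\Orb_X}\cplx{P}$.

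For part \itemref{L:compact_conc:perf_conc_c}, concentratedness of $X$ says precisely that the structure morphism $\pi\colon X\to \spec\Z$ is a concentrated morphism of algebraic stacks. Theorem \ref{thm:fcd_coprod}\itemref{thm:fcd_coprod:item:ra_coincide},\itemref{thm:fcd_coprod:item:sm_cp} then shows that $\RDERF\Gamma(X,-)=\RDERF\MODPSH{\pi}$ agrees with $\RDERF\QCPSH{\pi}$ on $\DQCOH(X)$ and preserves small coproducts. Since $\Hom_{\DCAT(X)}(\Orb_X,-)=H^0\RDERF\Gamma(X,-)$ and $H^0$ commutes with small coproducts of complexes of abelian groups, this shows $\Orb_X\in \DQCOH(X)^c$. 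The remaining claim follows from \itemref{L:compact_conc:tens_c_p}: any perfect $\cplx{P}$ satisfies $\cplx{P}\homotopic \cplx{P}\tensor^{\LDERF}_{\Orb_X}\Orb_X$, which is the derived tensor product of a perfect complex with a compact complex, hence is compact.

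The main subtlety is the first step of part \itemref{L:compact_conc:c_perf}: I must be sure that pullback along a smooth atlas preserves compactness, which is the bridge between the global object on $X$ and its local avatar on an affine scheme. This rests on the fact that representable morphisms are concentrated (Lemma \ref{lem:conc}\itemref{lem:conc:item:rep}); after this, Theorem \ref{thm:fcd_coprod} and the formal adjointness mechanism of Section \ref{sec:tri_cats} do all the remaining work. Parts \itemref{L:compact_conc:tens_c_p} and \itemref{L:compact_conc:perf_conc_c} are then routine consequences of the $\SRHom$--$\tensor$ adjunction together with Lemma \ref{L:basic_props_perfect}\itemref{L:basic_props_perfect:dual}.
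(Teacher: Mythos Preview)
Your argument is correct and matches the paper's proof essentially line for line: the same reduction via a smooth affine presentation and Example \ref{ex:pres_cpt_adj} for \itemref{L:compact_conc:c_perf}, the same $\SRHom$--$\tensor$ adjunction combined with Lemma \ref{L:basic_props_perfect}\itemref{L:basic_props_perfect:dual} for \itemref{L:compact_conc:tens_c_p}, and the same appeal to Theorem \ref{thm:fcd_coprod}\itemref{thm:fcd_coprod:item:sm_cp} for \itemref{L:compact_conc:perf_conc_c}. One small point worth making explicit in \itemref{L:compact_conc:c_perf}: Lemma \ref{lem:conc} is stated under the standing hypothesis that $f$ is quasi-compact and quasi-separated, so before invoking \ref{lem:conc}\itemref{lem:conc:item:rep} you should note (as the paper does) that the quasi-separatedness of $X$ forces your smooth atlas $f\colon \spec A \to X$ to be quasi-compact and quasi-separated.
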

\begin{proof}
  We first prove \itemref{L:compact_conc:c_perf}. Consider a 
  smooth morphism $p\colon \spec A \to X$. It follows from the quasi-separatedness of $X$ that 
  $p$ is quasi-compact, quasi-separated, and representable. In particular, $p$ is concentrated 
  (Lemma \ref{lem:conc}\itemref{lem:conc:item:rep}) and so
  $\LDERF \QCPBK{p}\cplx{Q} \in \DQCOH(\spec A)^c$ by
  Example \ref{ex:pres_cpt_conc}.
  The claim now follows from Example \ref{ex:compact_ring} and Lemma 
  \ref{L:perf_wd}.

  To prove \itemref{L:compact_conc:tens_c_p}, we note that if $\cplx{M} \in \DCAT(X)$, 
  then \eqref{eq:ghomadj} implies
  \[
  \Hom_{\Orb_X}(\cplx{Q}\tensor^{\LDERF}_{\Orb_X} \cplx{P},\cplx{M}) \simeq \Hom_{\Orb_X}(\cplx{Q},\SRHom_{\Orb_X}(\cplx{P},\cplx{M})).
  \]
  By Lemma \ref{L:basic_props_perfect}\itemref{L:basic_props_perfect:dual},  the 
  restriction of $\SRHom(\cplx{P},-)$ to $\DQCOH(X)$ preserves small coproducts. Since 
  $\cplx{Q} \in \DQCOH(X)^c$, it follows that the restriction of 
  $\SRHom_{\Orb_X}(\cplx{Q}\tensor^{\LDERF}_{\Orb_X}\cplx{P},-)$ to $\DQCOH(X)$ 
  preserves small coproducts. Hence, $\cplx{Q} \tensor^{\LDERF}_{\Orb_X} \cplx{P} \in 
  \DQCOH(X)^c$.
  
  For \itemref{L:compact_conc:perf_conc_c}, we note that if $\cplx{M} \in \DCAT(X)$, 
  then by definition $\RHom_{\Orb_X}(\Orb_X,\cplx{M}) = \RDERF 
  \Gamma(X_{\lisset},\cplx{M})$. Since $X$ is concentrated, Theorem 
  \ref{thm:fcd_coprod}\itemref{thm:fcd_coprod:item:sm_cp} implies that the restriction of 
  $\RDERF \Gamma(X_{\lisset},-)$ to $\DQCOH(X)$ preserves small coproducts. Thus, 
  $\Orb_X \in \DQCOH(X)^c$. The latter claim follows from the former and 
  \itemref{L:compact_conc:tens_c_p}.
\end{proof}

In the following Lemma we provide criteria for a perfect complex on an algebraic stack to be compact. We
have not seen this characterization in the literature before. 
\begin{lemma}\label{lem:char_compact_stack}
  Let $X$ be a quasi-compact and quasi-separated algebraic stack and
  let $\cplx{P} \in \DQCOH(X)$ be a perfect complex. The following conditions are
  equivalent:
  \begin{enumerate}
  \item\label{item:char_compact_stack:base} $\cplx{P}$ is a compact
    object of $\DQCOH(X)$;
  \item\label{item:char_compact_stack:bddext} there exists an integer
    $r\geq 0$ such that $\Hom_{\Orb_X}(\cplx{P},\shv{N}[i])=0$ for all
    $\shv{N}\in\QCOH(X)$ and $i>r$; and
  \item\label{item:char_compact_stack:stabletrunc} there exists an
    integer $r\geq 0$ such that the natural map
    \[
    \tau^{\geq j}\RHom_{\Orb_X}(\cplx{P},\cplx{M}) \to \tau^{\geq
      j}\RHom_{\Orb_X}(\cplx{P},\tau^{\geq j-r}\cplx{M}) 
    \]
    is a quasi-isomorphism for all $\cplx{M} \in \DQCOH(X)$ and integers $j$.
  \end{enumerate}
\end{lemma}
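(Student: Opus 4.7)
The plan is to prove the cycle $(1) \Rightarrow (2) \Rightarrow (3) \Rightarrow (1)$, deferring the most delicate implication to last. For $(3) \Rightarrow (1)$, given a family $\{\cplx{M}_\alpha\}$ in $\DQCOH(X)$, I apply (3) with $j = 0$ to identify $\Hom(\cplx{P}, \bigoplus \cplx{M}_\alpha)$ with $H^0 \RHom(\cplx{P}, \bigoplus \tau^{\geq -r}\cplx{M}_\alpha)$, using that truncation commutes with coproducts. Since $\cplx{P}$ is perfect, $\cplx{P}^\vee$ has bounded tor-amplitude, so the complexes $\cplx{P}^\vee \tensor^{\LDERF}_{\Orb_X} \tau^{\geq -r}\cplx{M}_\alpha$ are uniformly bounded below; Lemma~\ref{lem:qcqs_conditions}\itemref{item:qcqs_conditions:bdd_coprod} then supplies coproduct-preservation of $\RDERF\Gamma(X, -)$ on this subcategory, and reapplying (3) to each summand yields $\Hom(\cplx{P}, \bigoplus \cplx{M}_\alpha) = \bigoplus \Hom(\cplx{P}, \cplx{M}_\alpha)$.

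For $(2) \Rightarrow (3)$, I fix $j \in \Z$ and $\cplx{M} \in \DQCOH(X)$, and set $\cplx{K} = \tau^{<j-r}\cplx{M}$; the triangle $\cplx{K} \to \cplx{M} \to \tau^{\geq j-r}\cplx{M}$ reduces the claim to showing $\RHom(\cplx{P}, \cplx{K}) \in \DCAT^{<j}(\AB)$. I write $\cplx{K} = \holim{n} \cplx{K}_n$ with $\cplx{K}_n = \tau^{\geq -n}\cplx{K}$ bounded. For each $n$, the hypercohomology spectral sequence
\[
E_2^{p,q} = \Ext^p(\cplx{P}, H^q(\cplx{K}_n)) \Rightarrow \Ext^{p+q}(\cplx{P}, \cplx{K}_n)
\]
has support restricted by (2) to $p \in [0, r]$ and, by construction, to $q < j - r$, so $\Ext^i(\cplx{P}, \cplx{K}_n) = 0$ for all $i \geq j$. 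The Milnor sequence then reduces the required vanishing to $\lim^1 \Ext^{j-1}(\cplx{P}, \cplx{K}_n) = 0$. The triangles $H^{-n-1}(\cplx{K})[n+1] \to \cplx{K}_{n+1} \to \cplx{K}_n$ show that the cokernel of the transition $\Ext^{j-1}(\cplx{P}, \cplx{K}_{n+1}) \to \Ext^{j-1}(\cplx{P}, \cplx{K}_n)$ injects into $\Ext^{j+n+1}(\cplx{P}, H^{-n-1}(\cplx{K}))$, which vanishes by (2) once $n \geq r - j$; hence the transition system is Mittag--Leffler and $\lim^1$ vanishes.

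The remaining implication $(1) \Rightarrow (2)$ is the hardest step and where I expect the main obstacle. Arguing by contradiction, suppose there exist $(\shv{N}_n, i_n)$ with $i_n \to \infty$ and $\Ext^{i_n}(\cplx{P}, \shv{N}_n) \neq 0$; setting $\shv{N} = \bigoplus_n \shv{N}_n$, the inclusions $\shv{N}_n \hookrightarrow \shv{N}$ admit retractions, so $\Ext^{i_n}(\cplx{P}, \shv{N}_n) \hookrightarrow \Ext^{i_n}(\cplx{P}, \shv{N})$, showing that a single quasi-coherent $\shv{N}$ realises unbounded Ext against $\cplx{P}$. The strategy is then to pull back to a smooth affine atlas $p \colon U \to X$, on which $\cplx{P}_U = \LDERF\QCPBK{p}\cplx{P}$ is a perfect complex with (2) holding at an explicit bound $r_0$ coming from its amplitude, and to descend this bound to $X$ via cohomological descent along the simplicial hypercover $p_\bullet \colon U_\bullet \to X$. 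Each representable $p_n$ is concentrated by Lemma~\ref{lem:conc}\itemref{lem:conc:item:rep}, so $\RDERF\QCPSH{p_n}$ preserves small coproducts by Theorem~\ref{thm:fcd_coprod}\itemref{thm:fcd_coprod:item:sm_cp}. The main technical obstacle is controlling the cosimplicial direction of the resulting descent spectral sequence for $\Ext^*_X(\cplx{P}, \shv{N})$: its rows are uniformly bounded by $r_0$ from (2) on each $U_n$, but the cosimplicial columns can a priori produce unbounded total cohomology, so the global compactness of $\cplx{P}$ must ultimately supply the uniform bound.
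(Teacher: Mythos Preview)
Your $(3)\Rightarrow(1)$ matches the paper's argument exactly. Your $(2)\Rightarrow(3)$ is correct but genuinely different: the paper passes to the strictly simplicial \'etale topos of a smooth hypercover and invokes \cite[Lem.~2.1.10]{MR2434692} with $\epsilon_*$ replaced by $\Hom(\cplx{P}^+_{\et},-)$, whereas you argue directly via left-completeness of $\DQCOH(X)$, the Milnor sequence, and a Mittag--Leffler check. Your route is more self-contained; the paper's is shorter because the key estimate is outsourced. Two small points: you should cite left-completeness explicitly (\cite[Thm.~B.1]{hallj_neeman_dary_no_compacts}) when writing $\cplx{K}=\holim{n}\cplx{K}_n$, and the lower bound $p\geq 0$ on your $E_2$-page is not a consequence of (2) (though a finite lower bound does follow from the tor-amplitude of $\cplx{P}$, and in any case only the upper bound $p\leq r$ is used).

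Your $(1)\Rightarrow(2)$, however, has a real gap, which you yourself identify: the descent spectral sequence has rows bounded by $r_0$ but unbounded cosimplicial degree, and there is no mechanism by which compactness of $\cplx{P}$ forces collapse---indeed, if it did, every perfect complex would satisfy (2), which fails precisely when $X$ is not concentrated. The paper's argument is far simpler and you were one move away from it. Having found $\shv{N}_n\in\QCOH(X)$ and $i_n\to\infty$ with $\Hom_{\Orb_X}(\cplx{P},\shv{N}_n[i_n])\neq 0$, do \emph{not} form the sheaf $\bigoplus_n\shv{N}_n$; instead form the complex $\bigoplus_n\shv{N}_n[i_n]\in\DQCOH(X)$. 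Because $i_n\to\infty$, each cohomological degree sees only finitely many summands, and left-completeness of $\DQCOH(X)$ gives a quasi-isomorphism
\[
\bigoplus_n\shv{N}_n[i_n]\;\simeq\;\prod_n\shv{N}_n[i_n].
\]
Then the canonical map
\[
\bigoplus_n\Hom_{\Orb_X}(\cplx{P},\shv{N}_n[i_n])\longrightarrow
\Hom_{\Orb_X}\Bigl(\cplx{P},\bigoplus_n\shv{N}_n[i_n]\Bigr)
\simeq\prod_n\Hom_{\Orb_X}(\cplx{P},\shv{N}_n[i_n])
\]
is the inclusion of an infinite direct sum of nonzero abelian groups into their product, hence not an isomorphism, contradicting compactness of $\cplx{P}$.
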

\begin{proof}
  Assume that \itemref{item:char_compact_stack:bddext} does not
  hold. Then there is an infinite sequence of quasi-coherent
  $\Orb_X$-modules $\shv{M}_1$, $\shv{M}_2$, $\dots$ and strictly increasing
  sequence of integers $d_1 < d_2 < \dots$ such that
  $\Hom_{\Orb_X}(\cplx{P},\cplx{M}_i[d_i]) \neq 0$ for every $i$. Since $\DQCOH(X)$ is left-complete
  \cite[Thm.~B.1]{hallj_neeman_dary_no_compacts}, there is a
  quasi-isomorphism in $\DQCOH(X)$:
  \[
  \bigoplus_{i=1}^\infty \shv{M}_i[d_i] \homotopic \prod_{i=1}^\infty \shv{M}_i[d_i].
  \]
  This implies that the natural morphism
  \[
  \bigoplus_{i=1}^\infty \Hom_{\Orb_X}(\cplx{P},\shv{M}_i[d_i]) \to
  \Hom_{\Orb_X}(\cplx{P},\bigoplus_{i=1}^{\infty}\shv{M}_i[d_i])
  \homotopic \prod_{i=1}^{\infty}
  \Hom_{\Orb_X}(\cplx{P},\shv{M}_i[d_i]) 
  \]
  is not an isomorphism. In particular, $\cplx{P}$ is not
  compact. Thus, by the contrapositive, we have proved the implication
  \itemref{item:char_compact_stack:base}$\Rightarrow$\itemref{item:char_compact_stack:bddext}.

  For
  \itemref{item:char_compact_stack:bddext}$\Rightarrow$\itemref{item:char_compact_stack:stabletrunc}, first choose a diagram as in
  \eqref{eq:simplicial_construction_of_derived_functors} with $V=Y=\spec \Z$.
  Let $\cplx{P}^+_{\bullet,\et}=\RDERF (\REST[{U_\bullet}])_*\LDERF (p_{\bullet,\lisset}^+)^*\cplx{P}$
    and $\cplx{M}^+_{\bullet,\et}=\RDERF (\REST[{U_\bullet}])_*\LDERF (p_{\bullet,\lisset}^+)^*\cplx{M}$.
  Note that
  \[
  \RDERF (\REST[{U_\bullet}])_* \LDERF
  (p_{\bullet,\lisset}^+)^*\SRHom_{\Orb_X}(\cplx{P},\cplx{M})\homotopic \SRHom_{\Orb_{U^+_{\bullet,\et}}}(\cplx{P}^+_{\bullet,\et},\cplx{M}^+_{\bullet,\et})
  \]
  has quasi-coherent cohomology (Lemma~\ref{L:basic_props_perfect}) and that
  $\RHom_{\Orb_{U^+_{\bullet,\et}}}(\cplx{P}^+_{\bullet,\et},\shv{N}[i])=0$
  for all $i>r$ and $\shv{N}\in\QCOH(U^+_{\bullet,\et})$. It is enough to prove that
  \[
  \tau^{\geq j}\RHom_{\Orb_{U^+}}(\cplx{P}^+_{\bullet,\et},
    \cplx{M}) \to \tau^{\geq
    j}\RHom_{\Orb_{U^+}}(\cplx{P}^+_{\bullet,\et},\tau^{\geq j-r}\cplx{M}) 
  \]
  for every integer $j$ and $\cplx{M}\in\DQCOH(U^+_{\bullet,\et})$.
  This follows as in the proof of
  \cite[Lem.~2.1.10]{MR2434692} with $\epsilon_*$
  replaced by $\Hom_{\Orb_{U^+_{\bullet,\et}}}(\cplx{P}^+_{\bullet,\et},-)$. 

  Finally, for
  \itemref{item:char_compact_stack:stabletrunc}$\Rightarrow$\itemref{item:char_compact_stack:base},
  we note that because $\cplx{P}$ is perfect, it is dualizable. Thus, for
  all $\cplx{M} \in \DQCOH(X)$ and integers $j$, there are natural
  quasi-isomorphisms: 
  \begin{align*}
  \trunc{\geq j}\RHom_{\Orb_X}(\cplx{P},\cplx{M}) &\homotopic
  \trunc{\geq j}\RHom_{\Orb_X}(\cplx{P},\trunc{\geq j-r}\cplx{M}) \\
   &\homotopic \trunc{\geq j}\RDERF\Gamma(X_\lisset,\cplx{P}^\vee
  \tensor_{\Orb_X}^{\LDERF} \trunc{\geq j-r}\cplx{M}). 
  \end{align*}
  Also since $\cplx{P}$ is perfect, the restriction of
  the functor
  $\cplx{P}^\vee \tensor_{\Orb_X}^{\LDERF}(-) $ to $\DQCOH^{\geq j-r}(X)$
  factors through $\DQCOH^{\geq j-r'}(X)$ for some fixed integer
  $r'$. The result now follows from Lemma
  \ref{lem:qcqs_conditions}\itemref{item:qcqs_conditions:bdd_coprod}.
\end{proof}
For future reference, we summarize the situation in the following remark.
\begin{remark}\label{rem:fin_coh_dim}
Let $X$ be a quasi-compact and quasi-separated algebraic stack. Then the following are equivalent:
\begin{enumerate}
\item every perfect object of $\DQCOH(X)$ is compact;
\item the structure sheaf $\Orb_X$ is compact;
\item $X$ has finite cohomological dimension; and
\item the derived global section functor $\RDERF\Gamma\colon \DQCOH(X)\to
  \DCAT(\AB)$ commutes with small coproducts.
\end{enumerate}
The equivalence of the first two conditions follows from Lemma \ref{L:compact_conc}. The structure sheaf is compact, if and only if $X$
has finite cohomological dimension (Lemma~\ref{lem:char_compact_stack}). The
last condition is equivalent to the definition of $\Orb_X$ being compact.
\end{remark}
\subsection{Supports and generation}
Let $X$ be a scheme and let $M \in \QCOH(X)$. The \emph{support} of $M$ is the subset
\[
\supp(M) = \{x\in |X| \suchthat M_x \neq 0\}.
\]
More generally, if $X$ is an algebraic stack and $M \in \QCOH(X)$, then we define the \emph{support} of $M$ as follows: let $p\colon U \to X$ be a smooth surjection from a scheme $U$; then $\supp(M) = p(\supp(p^*M))$. It is easily verified that this is well-defined (i.e., independent of the cover $p$). 
\begin{definition}
  Let $X$ be an algebraic stack and let $E \in \DQCOH(X)$. Define the \emph{cohomological support} of $E$ to be the subset 
  \[
  \supph(E) = \cup_{n\in \Z} \supp(\COHO{n}(E)) \subseteq |X|.
  \]
\end{definition}
Recall that if $X$ is a quasi-separated algebraic stack, then for
every $x \in |X|$, there is a quasi-affine monomorphism $i_x \colon
\mathcal{G}_x \hookrightarrow X$ with image $x$, where $\mathcal{G}_x$
is a gerbe over a field $\kappa(x)$, the residue field at $x$
\cite[Thm.~B.2]{MR2774654}. We refer to this data as the
\emph{residual gerbe at $x$}. 
We now have the following lemma.
\begin{lemma}\label{L:supports_perfect_complexes_properties}
  Let $X$ be a quasi-compact and quasi-separated algebraic stack. Let
  $E$ be a perfect complex on $X$.
  \begin{enumerate}
  \item\label{L:supports_perfect_complexes_properties:points} Then
    $x\in \supph(E)$ if and only if the complex $\LDERF \QCPBK{(i_x)}E$
    is not acyclic in $\DQCOH(\mathcal{G}_x)$, where $i_x \colon
    \mathcal{G}_x \to X$ is the residual gerbe.
  \item \label{L:supports_perfect_complexes_properties:pb} If $f\colon
    X' \to X$ is a quasi-compact and quasi-separated morphism, then
    $\supph(\LDERF \QCPBK{f}E) = f^{-1}\supph(E)$.
  \item \label{L:supports_perfect_complexes_properties:cons}
    $\supph(E)$ is closed with quasi-compact complement.
  \end{enumerate}
\end{lemma}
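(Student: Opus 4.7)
The key local input throughout is a Nakayama-type fact for perfect complexes on an affine scheme: if $F$ is perfect on $\spec A$ and $\mathfrak{p} \in \spec A$, the conditions $F_\mathfrak{p} \simeq 0$ in $\DCAT(A_\mathfrak{p})$, $F \tensor^{\LDERF}_A \kappa(\mathfrak{p}) \simeq 0$, and $\mathfrak{p} \notin \supph(F)$ are equivalent, since a bounded complex of finitely generated projective $A_\mathfrak{p}$-modules is acyclic iff it is null-homotopic. I reduce each of \itemref{L:supports_perfect_complexes_properties:points}--\itemref{L:supports_perfect_complexes_properties:cons} to this affine statement via a smooth atlas.

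For \itemref{L:supports_perfect_complexes_properties:points}, choose a smooth cover $p \colon U = \spec A \to X$ and pick $x' \in |U|$ with $p(x') = x$. Since $p$ is flat, equation~\eqref{eq:fl_pbk} gives $\supph(\LDERF\QCPBK{p}E) = p^{-1}\supph(E)$, so $x \in \supph(E)$ iff $x' \in \supph(\LDERF\QCPBK{p}E)$. The induced morphism $\spec \kappa(x') \to X$ factors through the residual gerbe as $\spec\kappa(x') \xrightarrow{c} \mathcal{G}_x \xrightarrow{i_x} X$, and $c$ is faithfully flat, being the composition of the smooth atlas $\spec \kappa(x) \to \mathcal{G}_x$ with the field extension $\spec \kappa(x') \to \spec \kappa(x)$. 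Applying Nakayama to $\LDERF\QCPBK{p}E$ on $\spec A$ and then faithful flatness of $c$, we get
\[
x \in \supph(E) \iff (\LDERF\QCPBK{p}E) \tensor^{\LDERF}_A \kappa(x') \not\simeq 0 \iff \LDERF c^*\LDERF\QCPBK{(i_x)}E \not\simeq 0 \iff \LDERF\QCPBK{(i_x)}E \not\simeq 0.
\]

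Part \itemref{L:supports_perfect_complexes_properties:pb} then follows formally. For $x' \in |X'|$ with $x = f(x')$, the composition $f \circ i_{x'} \colon \mathcal{G}_{x'} \to X$ has set-theoretic image $\{x\}$, so it factors (uniquely up to $2$-isomorphism) as $i_x \circ g$ for some $g \colon \mathcal{G}_{x'} \to \mathcal{G}_x$. Hence $\LDERF\QCPBK{(i_{x'})}\LDERF\QCPBK{f}E \simeq \LDERF\QCPBK{g}\LDERF\QCPBK{(i_x)}E$. Pulling further back along $\spec \kappa(x') \to \mathcal{G}_{x'}$ and noting that the resulting composite $\spec \kappa(x') \to \mathcal{G}_x$ is again faithfully flat, we conclude that $\LDERF\QCPBK{g}$ is conservative on $\DQCOH(\mathcal{G}_x)$; applying \itemref{L:supports_perfect_complexes_properties:points} to both $X'$ and $X$ converts the conservativity into the equality of supports.

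For \itemref{L:supports_perfect_complexes_properties:cons}, the assertion is smooth-local on $X$ because smooth pullback is flat (so commutes with cohomology and preserves supports) and because a subset of $|X|$ is closed with quasi-compact complement iff its preimage under a smooth cover by a quasi-compact scheme is. I therefore reduce to $X = \spec A$ and represent $E$ after shrinking by a bounded complex $P^\bullet$ of finitely generated projective $A$-modules. Closedness follows from Nakayama: if $P^\bullet_\mathfrak{p}$ is null-homotopic, the contracting homotopy involves finitely many elements of $A_\mathfrak{p}$, so clearing denominators produces a null-homotopy over $A[1/f]$ for some $f \notin \mathfrak{p}$, whence $D(f) \subseteq X \setminus \supph(E)$. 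The main obstacle is quasi-compactness of the complement: pointwise Nakayama only shows the acyclic locus is a union of basic opens, and an arbitrary such union in $\spec A$ need not be quasi-compact. The resolution is the standard fact for perfect complexes on qcqs schemes, where the uniform bounded amplitude and finite ranks of the $P^i$ force the acyclic locus to be the complement of a closed subset cut out by finitely many basic opens (essentially a Fitting-ideal argument), and this gives the needed quasi-compactness globally.
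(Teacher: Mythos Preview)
Your argument is correct and follows essentially the same route as the paper's own proof: reduce to a smooth atlas, invoke the Nakayama-type characterization of support for perfect complexes on an affine scheme (which the paper cites as \cite[Lem.~3.3]{MR1436741}), and use faithful flatness of the maps from residue fields to residual gerbes. For \itemref{L:supports_perfect_complexes_properties:pb} the paper asserts directly that the induced morphism of residual gerbes $\mathcal{G}_{x'}\to\mathcal{G}_{f(x')}$ is faithfully flat, whereas you instead observe that the composite $\spec\kappa(x')\to\mathcal{G}_x$ is faithfully flat (which is immediate since any morphism to a gerbe over a field is flat, as seen after base change to the atlas $\spec\kappa(x)$) and deduce conservativity of $g^*$ from that; the two formulations are equivalent. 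For \itemref{L:supports_perfect_complexes_properties:cons} both you and the paper reduce to the affine case and defer to the standard closed-with-quasi-compact-complement statement for perfect complexes; your Fitting-ideal sketch is adequate, though the phrase ``after shrinking'' is unnecessary since on an affine scheme a perfect complex is globally quasi-isomorphic to a strictly perfect one.
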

\begin{proof}
  For \itemref{L:supports_perfect_complexes_properties:points}, then
  $x\in \supph(E)$, if and only if there exists a smooth and
  surjective morphism $p\colon U \to X$, where $U$ is a scheme, and
  $u\in U$ such that $p(u) = x$ and $(\LDERF \QCPBK{p}E)_u$ is not
  acyclic in $\DQCOH(\Orb_{U,u})$. By \cite[Lem.~3.3(a)]{MR1436741},
  this is equivalent to $(\LDERF \QCPBK{p}E)\tensor \kappa(u)$ not
  being acyclic in $\DQCOH(\kappa(u))$. Since $\spec \kappa(u) \to
  \mathcal{G}_x$ is faithfully flat, the result follows.

  For \itemref{L:supports_perfect_complexes_properties:pb}, we use
  \itemref{L:supports_perfect_complexes_properties:points} and argue
  as in \cite[Lem.~3.3(b)]{MR1436741}, with the additional observation
  that for every $x'\in |X'|$, the induced morphism on residual gerbes
  $\mathcal{G}_{x'} \to \mathcal{G}_{f(x')}$ is faithfully flat.

  For \itemref{L:supports_perfect_complexes_properties:cons}, by
  \itemref{L:supports_perfect_complexes_properties:pb}, the conclusion
  may be verified smooth-locally. In particular, we may assume that
  $X$ is an affine scheme, and the result follows from
  \cite[Lem.~3.3(c)]{MR1436741}.
\end{proof}
If $X$ is an algebraic stack and $Z\subseteq |X|$, then we define
\[
\DQCOH[,|Z|](X) =
\{ \cplx{M} \in \DQCOH(X) \suchthat \supph(\cplx{M})\subseteq Z \}.
\]
If $j\colon U \hookrightarrow X$ is a
flat monomorphism, e.g., an open immersion, then
\[
\DQCOH[,|X\setminus U|](X) = \{ \cplx{M} \in
\DQCOH(X) \suchthat j^*\cplx{M} \homotopic 0\}.
\]
\begin{lemma}\label{L:local-generator}
  Let $X$ be an algebraic stack and let $\cplx{P}\in \DQCOH(X)$ be a perfect
  complex with support $Z=\supph(\cplx{P})$. If $\cplx{M}\in \DQCOH[,|Z|](X)$,
  then $\cplx{M}\homotopic 0$ if and only if
  $\SRHom_{\Orb_X}(\cplx{P},\cplx{M}) \homotopic 0$.
\end{lemma}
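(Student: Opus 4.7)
The direction $\cplx{M}\simeq 0 \Rightarrow \SRHom_{\Orb_X}(\cplx{P},\cplx{M})\simeq 0$ is immediate. For the converse, the plan is to rewrite the hypothesis using perfectness and then verify vanishing pointwise. By Lemma~\ref{L:basic_props_perfect}\itemref{L:basic_props_perfect:dual}, the hypothesis becomes
\[
\cplx{P}^\vee\tensor^{\LDERF}_{\Orb_X}\cplx{M}\simeq 0,
\]
where $\cplx{P}^\vee=\SRHom_{\Orb_X}(\cplx{P},\Orb_X)$ is again perfect. By the double-duality part of Lemma~\ref{L:basic_props_perfect} combined with Lemma~\ref{L:supports_perfect_complexes_properties}\itemref{L:supports_perfect_complexes_properties:points}, $\cplx{P}^\vee$ has support equal to $Z$. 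Since the assertion is smooth-local on $X$, I would reduce to $X=\spec A$ affine.

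The key step is a pointwise verification: for every $x\in|X|$, show that $\LDERF i_x^{*}\cplx{M}\simeq 0$, where $i_x\colon\mathcal{G}_x\to X$ is the residual gerbe. If $x\notin Z$, the hypothesis $\supph(\cplx{M})\subseteq Z$ gives vanishing of every cohomology-sheaf stalk at $x$, so $\cplx{M}_x\simeq 0$ in $\DCAT(\Orb_{X,x})$ and hence $\LDERF i_x^{*}\cplx{M}\simeq 0$. If $x\in Z$, pull the vanishing of $\cplx{P}^\vee\tensor^{\LDERF}\cplx{M}$ back to $\mathcal{G}_x$ and further to a geometric point $\spec K\to\mathcal{G}_x$; this yields
\[
\cplx{P}^\vee_K\tensor^{\LDERF}_{K}\cplx{M}_K\simeq 0 \quad\text{in }\DCAT(K).
\]
By Lemma~\ref{L:supports_perfect_complexes_properties}\itemref{L:supports_perfect_complexes_properties:points}, $\cplx{P}^\vee_K$ is a nonzero bounded complex of $K$-vector spaces. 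Over a field the derived tensor product of two complexes is nonzero as soon as both factors are nonzero (Künneth), so $\cplx{M}_K\simeq 0$, and faithful flatness of $\spec K\to\mathcal{G}_x$ promotes this to $\LDERF i_x^{*}\cplx{M}\simeq 0$.

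The final step is to pass from this pointwise vanishing to $\cplx{M}\simeq 0$ globally. Since $\cplx{P}^\vee$ is perfect, it has finite Tor-amplitude, and the spectral sequence
\[
E_2^{p,q} \;=\; \COHO{p}\bigl(\cplx{P}^\vee\tensor^{\LDERF}_{\Orb_X}\COHO{q}(\cplx{M})\bigr) \;\Longrightarrow\; \COHO{p+q}\bigl(\cplx{P}^\vee\tensor^{\LDERF}\cplx{M}\bigr)=0
\]
converges. Combined with the pointwise residue-field argument applied to the lowest nonzero cohomology sheaf of $\cplx{M}$ (or to a generic point of an irreducible component of $\supph(\cplx{M})\subseteq Z$), one concludes via Nakayama that every $\COHO{n}(\cplx{M})$ vanishes. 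I expect the main obstacle to be this last step: mere pointwise vanishing of the residue-field pullback does not imply vanishing of a quasi-coherent sheaf (for instance $\mathbb{Z}/p^\infty$ over $\mathbb{Z}_{(p)}$ has $N\tensor \mathbb{F}_p=0$ yet is nonzero), so one must genuinely use the full strength of $\cplx{P}^\vee\tensor^{\LDERF}\cplx{M}\simeq 0$ rather than its residue-field shadow, exploiting that $\cplx{P}^\vee$ detects nontriviality along the whole of $Z$.
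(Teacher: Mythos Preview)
Your reduction to the affine case and the rewriting as $\cplx{P}^\vee\tensor^{\LDERF}\cplx{M}\simeq 0$ are fine, and you have correctly identified the genuine gap: the residue-field argument cannot close, precisely for the reason you give. The spectral-sequence step does not rescue it either. Even granting convergence (which is fine by finite Tor-amplitude), to peel off the lowest cohomology you would need the statement ``$\cplx{P}^\vee\tensor^{\LDERF}_A N\simeq 0$ for a module $N$ supported on $Z$ implies $N=0$'', and your $\Z/p^\infty$ example shows that checking this on residue fields is insufficient. Proving that module-level statement directly is essentially equivalent to proving that $\cplx{P}^\vee$ generates $\DQCOH[,|Z|](\spec A)$, which is the whole lemma.

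The paper's proof supplies exactly this missing piece, but via a different route. After reducing to $X=\spec A$, it invokes two external results: first, B\"okstedt--Neeman's theorem that a Koszul complex $K$ on generators of an ideal with radical $\sqrt{I(Z)}$ is a compact generator of $\DQCOH[,|Z|](\spec A)$; second, the Hopkins--Neeman--Thomason result that any perfect complex with support exactly $|Z|$ contains $K$ in its thick closure. Together these say that $\cplx{P}$ itself generates $\DQCOH[,|Z|](\spec A)$, so $\RHom_{\Orb_X}(\cplx{P},\cplx{M})\simeq 0$ forces $\cplx{M}\simeq 0$; on an affine this is equivalent to the $\SRHom$ statement. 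The moral is that the lemma is really a manifestation of the thick-subcategory classification for perfect complexes on an affine scheme, and there is no purely pointwise shortcut.
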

\begin{proof}
  The question is local on $X$ for the smooth topology, so we may
  assume that $X$ is an affine scheme.
  By \cite[Prop.~6.1]{MR1214458}, there is a perfect generating complex
  $K\in\DQCOH[,|Z|](X)^c$ (a Koszul complex). As $\cplx{P}$ is
  perfect and the support of $\cplx{P}$ is $|Z|$, it follows from
  \cite[Lem.~A.3]{MR1174255} that $K$ is in the thick closure of $\cplx{P}$, so
  $\cplx{P}$ is also a generator of
  $\DQCOH[,|Z|](X)$. Thus, $\RHom_{\Orb_X}(\cplx{P},\cplx{M}) \homotopic
  0$ if and only if $\cplx{M}\homotopic 0$.
\end{proof}
\begin{lemma}\label{lem:supports_and_supported_complexes}
  Let $X$ be a quasi-compact and quasi-separated algebraic stack and
  let $j\colon U \to X$ be a quasi-compact open immersion with
  complement $|Z|$.
  \begin{enumerate}
  \item\label{lem:supports_and_supported_complexes:item:1}
    If $\DQCOH[,|Z|](X)$ is generated by a set whose
    elements have compact image in $\DQCOH(X)$, then there exists a
    compact object $\cplx{Q}$ of $\DQCOH(X)$ with support $|Z|$.
  \item \label{lem:supports_and_supported_complexes:item:2} If
    $\DQCOH(X)$ is generated by a set of compact objects $\{\cplx{Q}_b\}_{b\in B}$ and
    there exists a perfect complex $\cplx{P}$ on $X$ with support
    $|Z|$, then $\DQCOH[,|Z|](X)$ is generated by the set
    $\{\cplx{Q}_b\otimes^{\LDERF}_{\Orb_X} \cplx{P}\}_{b\in B}$
    (whose elements have compact image in $\DQCOH(X)$).
  \end{enumerate}
\end{lemma}
\begin{proof}
  For \itemref{lem:supports_and_supported_complexes:item:1}:
  let $\{\cplx{Q}_\lambda\}_{\lambda\in \Lambda}$ be a set of
  generators for $\DQCOH[,|Z|](X)$ whose elements have compact image
  in $\DQCOH(X)$. Let $z\in |Z|$ be a point and choose a representative,
  that is, a field $k$ and a
  $1$-morphism of algebraic stacks $\bar{z}\colon \spec k \to
  X$ with image $z$.
  Since the diagonal of $X$ is quasi-compact and quasi-separated,
  it follows that $\bar{z}$ is quasi-compact and
  quasi-separated. By Lemma \ref{lem:qcqs_conditions}\itemref{item:qcqs_conditions:bdd_fl_bc}, it follows that $\LDERF
  \QCPBK{j}\RDERF \QCPSH{\bar{z}}\Orb_{\spec k} \homotopic 0$ and so there
  exists a $\lambda\in \Lambda$, an integer $n$, and a non-zero
  morphism $\cplx{Q}_\lambda[n] \to \RDERF \QCPSH{\bar{z}}\Orb_{\spec k}$. By
  adjunction, $\LDERF \QCPBK{\bar{z}} \cplx{Q}_\lambda \neq 0$ and we deduce
  that $\cup_{\lambda\in \Lambda}\supph(\cplx{Q}_\lambda) = |Z|$. It suffices
  to show that there is a finite subset $\Lambda' \subseteq \Lambda$
  such that $\cup_{\lambda\in \Lambda'} \supph(\cplx{Q}_\lambda) = |Z|$. This
  is obvious if $X$ is noetherian or, more generally, if $Z$ has a finite
  number of irreducible components.

  In complete generality, we note that $|Z|$ is constructible (by hypothesis) and that
  $\supph(\cplx{Q}_\lambda)$ is constructible for every $\lambda$ (by Lemma \ref{L:supports_perfect_complexes_properties}\itemref{L:supports_perfect_complexes_properties:cons}). Indeed, both subsets are closed with quasi-compact complement.
  We conclude that $|Z|=\cup_{\lambda\in \Lambda}\supph(\cplx{Q}_\lambda)$ has a
  finite subcovering since the constructible topology is quasi-compact.

  For \itemref{lem:supports_and_supported_complexes:item:2}: first note that the complex
  $\cplx{Q}_b\otimes^{\LDERF}_{\Orb_X} \cplx{P}$, which belongs to
  $\DQCOH[,|Z|](X)$, is a compact object of $\DQCOH(X)$
  (Lemma~\ref{L:compact_conc}\itemref{L:compact_conc:tens_c_p}).
  Let $\cplx{M} \in
  \DQCOH[,|Z|](X)$ and suppose that
  $\RHom_{\Orb_X}(\cplx{Q}_b\otimes^{\LDERF}_{\Orb_X} \cplx{P},\cplx{M}) \homotopic 0$. By
  adjunction \eqref{eq:ghomadj},
  $\RHom_{\Orb_X}(\cplx{Q}_b,\SRHom_{\Orb_X}(\cplx{P},\cplx{M})) \homotopic 0$. Since the
  set $\{\cplx{Q}_b\}_{b\in B}$ is generating, it follows that
  $\SRHom_{\Orb_X}(\cplx{P},\cplx{M}) \homotopic 0$. Thus $\cplx{M}\homotopic 0$ by
  Lemma~\ref{L:local-generator}.
\end{proof}

\subsection{Projection formula}
A typical application of Corollary \ref{cor:thomason_gens} is given by
the following Proposition. The given argument is a variant of
\cite[Prop.~5.3]{MR1308405}, though we have not seen this Proposition
in the literature before.
\begin{proposition}[Strong projection formula]\label{prop:strong_proj_form}
  Let $A$ be a ring and let $\pi\colon X \to \spec A$ be a morphism of
  algebraic stacks. Let $\cplx{Q}$ be a compact object of
  $\DQCOH(X)$ and let $\cplx{G} \in \DQCOH(X)$. Then for every $I \in
  \DCAT(A)$, there is a natural quasi-isomorphism:
  \[
  \RHom_{\Orb_X}(\cplx{Q},\cplx{G}) \otimes_A^{\LDERF} I
  \homotopic 
  \RHom_{\Orb_X}(\cplx{Q},\cplx{G}\otimes_{\Orb_X}^{\LDERF}
  \LDERF\QCPBK{\pi} I).
  \]
\end{proposition}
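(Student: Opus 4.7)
The plan is to fix the compact object $\cplx{Q}$ and the complex $\cplx{G}$, and then vary the argument $I\in\DCAT(A)=\DQCOH(\spec A)$. The morphism $\pi$ endows $\Orb_X$ with the structure of an $A$-algebra, so both sides of the claimed quasi-isomorphism are naturally objects of $\DCAT(A)$, and there is a canonical morphism
\[
\Theta_I\colon \RHom_{\Orb_X}(\cplx{Q},\cplx{G}) \otimes_A^{\LDERF} I \longrightarrow
\RHom_{\Orb_X}\bigl(\cplx{Q},\cplx{G}\otimes_{\Orb_X}^{\LDERF}\LDERF\QCPBK{\pi} I\bigr)
\]
obtained by adjunction from the evaluation pairing $\RHom_{\Orb_X}(\cplx{Q},\cplx{G})\otimes^{\LDERF}_{\Orb_X}\cplx{Q}\to \cplx{G}$ tensored with $\LDERF\QCPBK{\pi}I$, using that $\LDERF\QCPBK{\pi}$ is strong symmetric monoidal.

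Next, let $\mathscript{C}\subset\DCAT(A)$ denote the full subcategory of $I$ for which $\Theta_I$ is a quasi-isomorphism. Both the source and target of $\Theta_I$ are triangulated functors of $I$, so $\mathscript{C}$ is a triangulated subcategory. I would then verify that both functors preserve small coproducts: on the left this is automatic for the derived tensor product; on the right, $\LDERF\QCPBK{\pi}$ preserves small coproducts as a left adjoint, $\cplx{G}\otimes^{\LDERF}_{\Orb_X}(-)$ manifestly does, and $\RHom_{\Orb_X}(\cplx{Q},-)$ preserves small coproducts in $\DQCOH(X)$ precisely because $\cplx{Q}\in\DQCOH(X)^c$. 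Consequently $\mathscript{C}$ is a localizing subcategory of $\DCAT(A)$.

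Finally, when $I=A$ both sides reduce tautologically to $\RHom_{\Orb_X}(\cplx{Q},\cplx{G})$, since $\LDERF\QCPBK{\pi}A\homotopic\Orb_X$ and $(-)\otimes^{\LDERF}_AA$ is the identity; thus $A\in\mathscript{C}$. By Example~\ref{ex:compact_ring_gen}, the singleton $\{A\}$ generates $\DCAT(A)$, so Corollary~\ref{cor:thomason_gens} implies that the localizing envelope of $\{A\}$, and hence $\mathscript{C}$, is all of $\DCAT(A)$. The only delicate point is the initial construction of $\Theta_I$ and checking its naturality in $I$; once this bookkeeping is in place the rest of the argument is purely formal.
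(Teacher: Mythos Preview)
Your approach is essentially identical to the paper's: construct the comparison map, let $\mathscript{C}\subset\DCAT(A)$ be the locus where it is an isomorphism, observe that $\mathscript{C}$ is triangulated and closed under small coproducts (using compactness of $\cplx{Q}$), and conclude from $A\in\mathscript{C}$ via Corollary~\ref{cor:thomason_gens}. The only imprecision is in your construction of $\Theta_I$: as written, $\RHom_{\Orb_X}(\cplx{Q},\cplx{G})$ lives in $\DCAT(A)$, not $\DQCOH(X)$, so the expression $\RHom_{\Orb_X}(\cplx{Q},\cplx{G})\otimes^{\LDERF}_{\Orb_X}\cplx{Q}$ does not parse---you must first apply $\LDERF\QCPBK{\pi}$ (or work with $\SRHom$ and push forward at the end), exactly the bookkeeping the paper carries out explicitly and you yourself flag as the delicate point.
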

\begin{proof}
  First we describe the morphism: by adjunction, there is a natural
  morphism
  \[
  \LDERF \QCPBK{\pi}\RDERF \QCPSH{\pi}\SRHom_{\Orb_X}(\cplx{Q},\cplx{G})
  \to \SRHom_{\Orb_X}(\cplx{Q},\cplx{G})
  \]
  and so by \eqref{eq:ghomadj} there is a natural morphism
  \[
  \bigl(\LDERF \QCPBK{\pi}\RDERF \QCPSH{\pi}\SRHom_{\Orb_X}(\cplx{Q},\cplx{G})\bigr)
  \tensor_{\Orb_X}^{\LDERF} \cplx{Q} \tensor_{\Orb_X}^{\LDERF} \LDERF\QCPBK{\pi}I
  \to \cplx{G} \tensor_{\Orb_X}^{\LDERF} \LDERF \QCPBK{\pi}I.
  \]
  By \eqref{eq:ghomadj} again, there is a natural morphism:
  \[
  \LDERF \QCPBK{\pi}\left[\RDERF
    \QCPSH{\pi}\SRHom_{\Orb_X}(\cplx{Q},\cplx{G})
    \tensor_{A}^{\LDERF} I\right] \to \SRHom_{\Orb_X}(\cplx{Q},\cplx{G}
  \tensor_{\Orb_X}^{\LDERF} \LDERF \QCPBK{\pi}I).
  \]
  By adjunction and \eqref{eq:lghom} and
  \eqref{eq:global_sections_qcpsh} we deduce the existence of the
  required natural morphism
  \[
  \phi_I \colon \RHom_{\Orb_X}(\cplx{Q},\cplx{G}) \otimes_A^{\LDERF} I
  \to \RHom_{\Orb_X}(\cplx{Q},\cplx{G}\otimes_{\Orb_X}^{\LDERF}
  \LDERF\QCPBK{\pi} I).
  \]
  Let $\mathcal{K} \subset \DCAT(A)$ be the full subcategory with
  objects those $I$ such that $\phi_I$ is a quasi-isomorphism. It
  remains to show that $\mathcal{K} = \DCAT(A)$. Clearly,
  $\mathcal{K}$ is a triangulated subcategory that contains $A[k]$ for
  every integer $k$. Moreover, since $\cplx{Q}$ is a compact object of
  $\DQCOH(X)$, $\mathcal{K}$ is closed under small coproducts. The
  result now follows from Corollary \ref{cor:thomason_gens}. 
\end{proof}
A straightforward implication is the usual projection formula for
concentrated morphisms of algebraic stacks.
\begin{corollary}[Projection formula]\label{cor:proj_formula}
Let $f\colon X\to Y$ be a concentrated $1$-morphism of algebraic stacks.
The natural map
\[
(\RDERF \QCPSH{f} \cplx{M})\otimes^{\LDERF}_{\Orb_Y} \cplx{N} \to
\RDERF \QCPSH{f} (\cplx{M}\otimes^{\LDERF}_{\Orb_X} \LDERF \QCPBK{f}
\cplx{N})
\]
is a quasi-isomorphism for every $\cplx{M}\in \DQCOH(X)$ and $\cplx{N}\in \DQCOH(Y)$.
\end{corollary}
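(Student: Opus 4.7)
The plan is to deduce this corollary from the strong projection formula (Proposition~\ref{prop:strong_proj_form}) applied with the compact object $\cplx{Q} = \Orb_X$.

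First I would reduce to the case where $Y = \spec A$ is affine. Both sides of the proposed map are formed from $\RDERF\QCPSH{f}$, $\LDERF\QCPBK{f}$, and $\otimes^{\LDERF}$, each of which is compatible with flat base change on $Y$: for $\RDERF\QCPSH{f}$, this is Theorem~\ref{thm:fcd_coprod}\itemref{thm:fcd_coprod:item:fl_bc}; for $\LDERF\QCPBK{f}$ and $\otimes^{\LDERF}$ this is formal from their universal properties. Since a morphism in $\DQCOH(Y)$ is a quasi-isomorphism if and only if its restriction to a smooth cover is, we may assume $Y = \spec A$.

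Next I would observe that in this situation $X$ is concentrated. Indeed, since $f$ has finite cohomological dimension and $\RDERF\Gamma(Y,-)\colon \DQCOH(Y)\to\DCAT(A)$ is an equivalence, the composition
\[
\RDERF\Gamma(X,-) \simeq \RDERF\Gamma(Y,-)\circ \RDERF\QCPSH{f}
\]
preserves small coproducts by Theorem~\ref{thm:fcd_coprod}\itemref{thm:fcd_coprod:item:sm_cp} together with \eqref{eq:global_sections_qcpsh}. Remark~\ref{rem:fin_coh_dim} then gives that $\Orb_X$ is compact in $\DQCOH(X)$.

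Now apply Proposition~\ref{prop:strong_proj_form} with $\cplx{Q}=\Orb_X$, $\cplx{G}=\cplx{M}$, and $I$ the complex in $\DCAT(A)$ corresponding to $\cplx{N}\in\DQCOH(Y)$ under the equivalence $\RDERF\Gamma(Y,-)$. Using \eqref{eq:global_sections_qcpsh} to identify $\RDERF\Gamma(Y,\RDERF\QCPSH{f}\cplx{M})$ with $\RDERF\Gamma(X,\cplx{M}) = \RHom_{\Orb_X}(\Orb_X,\cplx{M})$, and the fact that the equivalence $\DQCOH(Y)\simeq\DCAT(A)$ intertwines $\otimes^{\LDERF}_{\Orb_Y}$ with $\otimes^{\LDERF}_A$, the strong projection formula translates, after inverting the equivalence $\RDERF\Gamma(Y,-)$, exactly into the desired projection formula. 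The main bookkeeping is to verify that the natural transformation arising in this way agrees with the one built from the adjunction $(\LDERF\QCPBK{f},\RDERF\QCPSH{f})$ and $\SRHom$--$\otimes$ adjunction in the statement; this is a diagram chase since both maps are assembled from the same units, counits, and monoidal compatibilities.
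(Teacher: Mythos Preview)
Your proposal is correct and follows essentially the same route as the paper: reduce to $Y$ affine via flat base change (Theorem~\ref{thm:fcd_coprod}\itemref{thm:fcd_coprod:item:fl_bc}), observe that $\Orb_X$ is then compact, and invoke Proposition~\ref{prop:strong_proj_form} with $\cplx{Q}=\Orb_X$. The paper cites Lemma~\ref{L:compact_conc}\itemref{L:compact_conc:perf_conc_c} rather than Remark~\ref{rem:fin_coh_dim} for compactness of $\Orb_X$, but the content is the same.
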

\begin{proof}
  By adjunction, there is a natural morphism
  \[
  \LDERF\QCPBK{f}\RDERF\QCPSH{f}\cplx{M} \tensor_{\Orb_X}^{\LDERF}
  \LDERF \QCPBK{f}\cplx{N} \to \cplx{M}\tensor_{\Orb_X}^{\LDERF}
  \LDERF \QCPBK{f}\cplx{N}.
  \]
  By adjunction again, we deduce the existence of a natural morphism
  \[
  \psi_{\cplx{N}} \colon (\RDERF \QCPSH{f} \cplx{M})\otimes^{\LDERF}_{\Orb_Y} \cplx{N} \to \RDERF \QCPSH{f} (\cplx{M}\otimes^{\LDERF}_{\Orb_X} \LDERF \QCPBK{f} \cplx{N}).
  \]
  It remains to show that $\psi_{\cplx{N}}$ is a quasi-isomorphism for
  every $\cplx{N} \in \DQCOH(Y)$. Note that the verification of this
  is smooth local on $Y$, so by Theorem
  \ref{thm:fcd_coprod}\itemref{thm:fcd_coprod:item:fl_bc}, we may
  reduce to the situation where $Y$ is an affine scheme. By Lemma
  \ref{L:compact_conc}\itemref{L:compact_conc:perf_conc_c}, $\Orb_X$ is compact and the result now
  follows from Proposition \ref{prop:strong_proj_form}. 
\end{proof}
\subsection{Tor-independent base change}
Let $f\colon X \to Y$ and $g\colon Y' \to Y$ be morphism of algebraic stacks. We say that 
$f$ and $g$ are \fndefn{tor-independent} if for every smooth morphism $\spec A \to Y$ 
and every pair of smooth morphisms $\spec B \to X\times_Y \spec A$ and $\spec A' \to 
Y'\times_Y \spec A$ we have that $\Tor_i^A(B,A') = 0$ for all $i>0$. Equivalently, 
$\STor^{Y,f,g}_i(\Orb_X,\Orb_{Y'}) = 0$ for every integer $i>0$ (see 
\cite[App.~C]{MR3589351} for details). Note that if $g$ is flat, then it is tor-independent 
of every $f$. The projection formula of Corollary \ref{cor:proj_formula} is powerful 
enough to prove a very general tor-independent base change result which extends 
Theorem \ref{thm:fcd_coprod}\itemref{thm:fcd_coprod:item:fl_bc}.
\begin{corollary}\label{cor:tor-independent-bc}
  Fix a $2$-cartesian square of algebraic stacks
    \[
    \xymatrix{X' \ar[r]^-{g'} \ar[d]_{f'} & X \ar[d]^f\\ Y'
      \ar[r]^{g} & Y.}
    \]
    If $f$ and $g$ are tor-independent and $f$ is concentrated, then
    there is a natural quasi-isomorphism for every $\cplx{M}\in
    \DQCOH(X)$:
    \[
    \LDERF \QCPBK{g} \RDERF \QCPSH{f}\cplx{M} \homotopic \RDERF
    \QCPSH{f'} \LDERF \QCPBK{g'}\cplx{M}.
    \]
\end{corollary}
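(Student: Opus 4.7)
The plan is to reduce to a situation where both projection formula and tor-independence can be combined explicitly, and then use a conservative pushforward to check the base-change map becomes a quasi-isomorphism.

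First, I would note that the base-change map exists by adjunction (as in Lemma~\ref{lem:qcqs_conditions}\itemref{item:qcqs_conditions:bdd_fl_bc}). Whether it is a quasi-isomorphism is smooth-local on $Y$ and on $Y'$. Indeed, tor-independence is smooth-local, concentratedness is preserved by base change (Lemma~\ref{lem:conc}\itemref{lem:conc:item:bc}), and Theorem~\ref{thm:fcd_coprod}\itemref{thm:fcd_coprod:item:fl_bc} lets us commute $\RDERF\QCPSH{f}$ and $\RDERF\QCPSH{f'}$ past a flat pullback. So I reduce to the case where $Y=\spec A$ and $Y'=\spec A'$ are affine schemes; in particular $g$ (and hence $g'$) is affine, and therefore concentrated by Lemma~\ref{lem:conc}\itemref{lem:conc:item:rep}.

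Next, since $g$ is affine, $\RDERF\QCPSH{g}$ is conservative by Corollary~\ref{C:qaff_cons}. Hence it suffices to show that the base-change map becomes a quasi-isomorphism after applying $\RDERF\QCPSH{g}$. I would unravel both sides:
\begin{itemize}
\item On the left, $\RDERF\QCPSH{g}\LDERF\QCPBK{g}\RDERF\QCPSH{f}\cplx{M}$ is identified, via the projection formula (Corollary~\ref{cor:proj_formula}) applied to $g$, with $(\RDERF\QCPSH{f}\cplx{M})\otimes_{\Orb_Y}^{\LDERF} g_*\Orb_{Y'}$. A second application of the projection formula, this time to the concentrated morphism $f$, rewrites this as $\RDERF\QCPSH{f}\bigl(\cplx{M}\otimes_{\Orb_X}^{\LDERF}\LDERF\QCPBK{f}g_*\Orb_{Y'}\bigr)$.
\item On the right, by pseudo-functoriality $\RDERF\QCPSH{g}\RDERF\QCPSH{f'}=\RDERF\QCPSH{f}\RDERF\QCPSH{g'}$ (using $f\circ g'=g\circ f'$), and then the projection formula for the affine morphism $g'$ turns this into $\RDERF\QCPSH{f}\bigl(\cplx{M}\otimes_{\Orb_X}^{\LDERF}(g')_*\Orb_{X'}\bigr)$, using $\RDERF\QCPSH{g'}\Orb_{X'}=(g')_*\Orb_{X'}$.
\end{itemize}
The two expressions are linked by the canonical map $\LDERF\QCPBK{f}g_*\Orb_{Y'}\to f^*g_*\Orb_{Y'}=(g')_*\Orb_{X'}$, which is a quasi-isomorphism precisely by the tor-independence hypothesis $\STor_i^{Y,f,g}(\Orb_X,\Orb_{Y'})=0$ for $i>0$. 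Assembling these identifications gives the desired quasi-isomorphism.

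The main obstacle is not any single step but the bookkeeping required to verify that the chain of natural isomorphisms constructed in the previous paragraph is compatible with the base-change map produced by adjunction; i.e., one must check that the outer composite agrees with $\RDERF\QCPSH{g}$ applied to the adjunction-defined base-change morphism. This is a standard diagram chase using the units and counits of the $(\LDERF\QCPBK{-},\RDERF\QCPSH{-})$ adjunctions for $f$, $g$, $f'$, and $g'$, together with the naturality statements for the two projection formulas already proved, and once it is carried out the conservativity of $\RDERF\QCPSH{g}$ finishes the proof.
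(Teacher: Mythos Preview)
Your proposal is correct and follows essentially the same approach as the paper: reduce smooth-locally to affine $Y$ and $Y'$, use conservativity of $\RDERF\QCPSH{g}$ for affine $g$, and then apply the projection formula (for $g$, $f$, and $g'$) together with the tor-independence identification $\LDERF\QCPBK{f}\RDERF\QCPSH{g}\Orb_{Y'}\simeq\RDERF\QCPSH{g'}\Orb_{X'}$ to match the two sides. The paper presents this as a single chain of quasi-isomorphisms rather than computing both sides separately, and it, like you, leaves the compatibility diagram chase implicit.
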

\begin{proof}
  By Theorem \ref{thm:fcd_coprod}\itemref{thm:fcd_coprod:item:fl_bc},
  the result can be verified smooth-locally on $Y$ and $Y'$. Thus, we
  may assume that $Y=\spec A$ and $Y'=\spec A'$. In particular, $g$
  and $g'$ are affine. Since $g$ is affine, $\RDERF \QCPSH{g}$ is
  conservative (Corollary \ref{C:qaff_cons}). Hence, it is sufficient to verify that the morphism in
  question is a quasi-isomorphism after application of the functor
  $\RDERF \QCPSH{g}$. By the projection formula applied to $g$ and
  then $f$, there are natural quasi-isomorphisms
  \begin{align*}
    \RDERF \QCPSH{g}\LDERF \QCPBK{g} \RDERF \QCPSH{f}\cplx{M}
    &\homotopic \RDERF \QCPSH{f}\cplx{M} \tensor_{\Orb_Y} \RDERF \QCPSH{g}\Orb_{Y'} \\
    &\homotopic \RDERF \QCPSH{f}(\cplx{M} \tensor_{\Orb_X}^{\LDERF}
    \LDERF \QCPBK{f}\RDERF \QCPSH{g}\Orb_{Y'}).
  \end{align*}
  Note, however, that because $f$ and $g$ are tor-independent and $g$
  is affine, the natural map
  \[
  \LDERF \QCPBK{f} \RDERF \QCPSH{g}\Orb_{Y'} \to \RDERF \QCPSH{g'}\Orb_{X'}
  \]
  is a quasi-isomorphism. Indeed, this may be verified smooth-locally on $X$, so we may assume that $X=\spec C$. The morphism in question corresponds to the map $C\tensor_A^{\LDERF} A' \to (C\tensor_A A')[0]$ in $\DCAT(C)$, which is a quasi-isomorphism because $f$ and $g$ are
  tor-independent. With the projection formula and functoriality, we
  now obtain the following natural sequence of quasi-isomorphisms:
  \begin{align*}
    \RDERF \QCPSH{f}(\cplx{M} \tensor_{\Orb_X}^{\LDERF} \LDERF
    \QCPBK{f}\RDERF \QCPSH{g}\Orb_{Y'}) &\homotopic \RDERF
    \QCPSH{f}(\cplx{M} \tensor_{\Orb_X}^{\LDERF} \RDERF
    \QCPSH{g'}\Orb_{X'})\\
    &\homotopic \RDERF
    \QCPSH{f}\RDERF
    \QCPSH{g'}\LDERF \QCPBK{g'}\cplx{M}\\
    &\homotopic \RDERF \QCPSH{g}\RDERF \QCPSH{f'}\LDERF \QCPBK{g'}\cplx{M}.
  \end{align*}
  The result follows.
 \end{proof}
Note that in the setting of derived algebraic geometry, tor-independence is not necessary to obtain a base change result \cite[Prop.~3.10]{MR2669705}.
\subsection{Finite duality}
Using the projection formula, we can also establish finite duality.
\begin{theorem}\label{T:fin_duality}
  Let $f \colon X \to Y$ be a concentrated morphism of algebraic stacks.
  \begin{enumerate}
  \item\label{T:fin_duality:existence}
    $\RDERF \QCPSH{f} \colon \DQCOH(X) \to \DQCOH(Y)$ admits
    a right adjoint $f^\times$.
  \item\label{T:fin_duality:perfect}
    For every $\cplx{M} \in 
    \DQCOH(Y)$ there is a natural 
    quasi-isomorphism:
    \[
    \RDERF \QCPSH{f} f^\times(\cplx{M}) \simeq \SRHom_{\Orb_Y}^{\qcsubscript}(\RDERF
    \QCPSH{f}\Orb_X,\cplx{M}).
    \]
  \item \label{T:fin_duality:affine} If $f$ is affine, then for every $\cplx{M} \in \DQCOH(Y)$ there is a natural quasi-isomorphism:
    \[
    f^\times(\cplx{M}) \simeq \bar{f}^*\SRHom^{\qcsubscript}_{\Orb_Y}(f_*\Orb_X,\cplx{M}),
    \]
    where $\bar{f}^*$ is the functor from Corollary \ref{C:aff_equiv_der}. 
  \item \label{T:fin_duality:finite} If $f$ is affine and 
    $f_*\Orb_X$ is perfect, then for every $\cplx{M} \in \DQCOH(Y)$ there is a natural quasi-isomorphism:
    \[
    f^\times(\Orb_Y)\tensor_{\Orb_X}^{\LDERF} f^*(\cplx{M})
    \simeq f^\times(\cplx{M}).
    \]
    In particular, $f^\times$ preserves small coproducts. Moreover, $f^\times$ is compatible with tor-independent 
    base change on $Y$. If in addition $f$ is surjective, 
    then $f^\times$ is conservative.
  \end{enumerate}
\end{theorem}
\begin{proof}
  By Theorem \ref{thm:fcd_coprod}\itemref{thm:fcd_coprod:item:sm_cp}, the functor 
  $\RDERF \QCPSH{f}$ preserves small coproducts. Since $\DQCOH(X)$ is well generated 
  \cite[Thm.~B.1]{hallj_neeman_dary_no_compacts}, the existence of $f^\times$ follows 
  from \cite[Prop.~1.20]{MR1812507}. Now fix $\cplx{N} \in \DQCOH(Y)$; then there are natural 
  isomorphisms:
  \begin{align*}
    \Hom_{\Orb_Y}(\cplx{N},    \RDERF \QCPSH{f} f^\times(\cplx{M})) 
    &\cong \Hom_{\Orb_X}(\LDERF \QCPBK{f}\cplx{N},f^\times(\cplx{M})) \\
    &\cong \Hom_{\Orb_Y}(\RDERF \QCPSH{f}\LDERF \QCPBK{f}\cplx{N},\cplx{M})\\
    &\cong \Hom_{\Orb_Y}((\RDERF \QCPSH{f}\Orb_X)\otimes^{\LDERF}_{\Orb_Y} \cplx{N}, \cplx{M})\\
    &\cong \Hom_{\Orb_Y}(\cplx{N},\SRHom_{\Orb_Y}^{\qcsubscript}(\RDERF
    \QCPSH{f}\Orb_X,\cplx{M})).
  \end{align*}
  The penultimate isomorphism follows from the projection formula (Corollary 
  \ref{cor:proj_formula}). By the Yoneda 
  Lemma, this proves \itemref{T:fin_duality:perfect}.

  We now address \itemref{T:fin_duality:affine}. Let $\tilde{f}^\times \colon \DQCOH(Y) \to \DQCOH(X)$ be the functor 
  \[
  \tilde{f}^\times(\cplx{M}) = \bar{f}^*\SRHom_{\Orb_Y}^{\qcsubscript}(f_*\Orb_X,\cplx{M}),
  \]
  where $\bar{f}^*$ comes from the equivalence of Corollary \ref{C:aff_equiv_der}. We 
  claim there is a natural transformation of functors $\tilde{f}^\times \Rightarrow f^\times$. 
  To see this, let $\cplx{N} \in \DQCOH(X)$ and $\cplx{M} \in \DQCOH(Y)$; then there are 
  natural morphisms:
  \begin{align*}
    \Hom_{\Orb_X}(\cplx{N},\tilde{f}^\times(\cplx{M})) &\to \Hom_{\Orb_Y}(\RDERF \QCPSH{f}\cplx{N},\RDERF \QCPSH{f}\tilde{f}^\times(\cplx{M}))\\
    &= \Hom_{\Orb_Y}(\RDERF \QCPSH{f}\cplx{N}, \SRHom_{\Orb_Y}^{\qcsubscript}(f_*\Orb_X,\cplx{M}))\\
      &\to \Hom_{\Orb_Y}(\RDERF \QCPSH{f}\cplx{N}, \SRHom_{\Orb_Y}^{\qcsubscript}(\Orb_Y,\cplx{M}))\\
          &\cong \Hom_{\Orb_Y}(\RDERF \QCPSH{f}\cplx{N},\cplx{M})\\
    &\cong \Hom_{\Orb_X}(\cplx{N},f^\times(\cplx{N})).
  \end{align*}
  By the Yoneda Lemma, we have the claim. Since $f$ is affine, to prove that the natural 
  transformation $\tilde{f}^\times \Rightarrow f^\times$ is an isomorphism, it is sufficient to 
  prove it is after application of $\RDERF \QCPSH{f}$ (Lemma \ref{C:qaff_cons}). This follows from the definition of 
  $\tilde{f}^\times$ and \itemref{T:fin_duality:perfect}. 

  We now treat \itemref{T:fin_duality:finite}. In general, it is not difficult to produce a natural morphism for every $\cplx{M} \in \DQCOH(Y)$:
  \[
  f^\times(\Orb_Y) \tensor^{\LDERF}_{\Orb_X} \LDERF \QCPBK{f}(\cplx{M}) \to f^\times(\cplx{M}).
  \]
  Since $f$ is affine, it is sufficient to prove this morphism is an isomorphism after 
  application of $\RDERF \QCPSH{f}$ (Lemma \ref{C:qaff_cons}). By 
  \itemref{T:fin_duality:perfect} and the projection formula 
  (Corollary~\ref{cor:proj_formula}), we see that it is sufficient to prove that the induced 
  morphism:
  \[
  \SRHom_{\Orb_Y}^{\qcsubscript}(f_*\Orb_X,\Orb_Y) \tensor^{\LDERF}_{\Orb_Y} \cplx{M} \to \SRHom_{\Orb_Y}^{\qcsubscript}(f_*\Orb_X,\cplx{M})
  \]
  is a quasi-isomorphism for every $\cplx{M} \in \DQCOH(Y)$. But $f_*\Orb_X$ is 
  perfect, so Lemma \ref{L:basic_props_perfect}\itemref{L:basic_props_perfect:dual} now 
  gives the claim. 

  For the compatibility of $f^\times$ with base change, we consider a tor-independent $2$-cartesian diagram 
  of algebraic stacks:
  \[
  \xymatrix{X' \ar[r]^{g'} \ar[d]_{f'} & X \ar[d]^{f}\\ Y' \ar[r]^g & Y.}
  \]
  Adjointness and tor-independent base change (Corollary~\ref{cor:tor-independent-bc}) 
  provides a natural transformation $\LDERF \QCPBK{g'} f^\times \to (f')^\times 
  \LDERF \QCPBK{g}$ of functors that we must show is an isomorphism. Tor-independent 
  base change also implies that there is a quasi-isomorphism: $\LDERF \QCPBK{g}\RDERF 
  \QCPSH{f}\Orb_X \simeq \RDERF \QCPSH{f'} \LDERF \QCPBK{g'}\Orb_X$.
  Since $\RDERF \QCPSH{f}\Orb_X$ is perfect, it follows that $\RDERF 
  \QCPSH{f'}\Orb_{X'}$ is perfect. By the formula just determined for $f^\times$, we thus 
  see that it is sufficient to prove that $\LDERF \QCPBK{g'} f^\times(\Orb_Y) \to (f')^\times 
  \LDERF \QCPBK{g}(\Orb_Y)$ is a quasi-isomorphism. Since $f$ (and so 
  $f'$) is affine, it is sufficient prove this isomorphism after application of $\RDERF 
  \QCPSH{f'}$. This observation, together with \itemref{T:fin_duality:perfect} and 
  tor-independent base change shows that it is sufficient to prove that the morphism:
  \[
  \LDERF  \QCPBK{g}\SRHom_{\Orb_Y}^{\qcsubscript}(f_*\Orb_X,\Orb_Y) \to \SRHom_{\Orb_{Y'}}(\LDERF \QCPBK{g}(f_*\Orb_X),\Orb_{Y'})
  \]
  is a quasi-isomorphism. But $f_*\Orb_X$ and $\LDERF \QCPBK{g}(f_*\Orb_X)$ are both 
  perfect and so dualizable (Lemma \ref{L:basic_props_perfect}). In particular, the derived pullback of the dual of $f_*\Orb_X$ coincides with the dual of $\LDERF \QCPBK{g}(f_*\Orb_X)$. It follows that the asserted map is a quasi-isomorphism 
  and the claim follows.

  Finally, we address the conservativity. For this, it is 
  sufficient to observe that if $f$ is surjective, then $\supph(f_*\Orb_X)=|Y|$. 
 But 
  $\SRHom_{\Orb_Y}(f_*\Orb_X,\cplx{M})\homotopic 0$ if and only if
  $\cplx{M}\homotopic 0$ (Lemma~\ref{L:local-generator}).
\end{proof}
\begin{corollary} \label{C:fin_duality}  
  If $f\colon X \to Y$ is a finite and faithfully flat morphism of finite presentation between 
  algebraic stacks, then the functor 
  \[
  f^\times(\cplx{M}) = \bar{f}^*\SRHom_{\Orb_Y}(f_*\Orb_X,\cplx{M}), \quad \mbox{where 
    $\cplx{M} \in \DQCOH(Y)$,}
  \]
  is right adjoint to $\RDERF \QCPSH{f}\colon \DQCOH(X) \to \DQCOH(Y)$. Moreover, 
  $f^\times$ is compatible with arbitrary base change on $Y$, 
  $f^\times(\Orb_Y)\tensor_{\Orb_X}^{\LDERF} f^*(-)
    \simeq f^\times(-)$, preserves small coproducts, and is conservative.  
\end{corollary}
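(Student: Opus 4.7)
The plan is to derive this corollary directly from Theorem~\ref{T:fin_duality}\itemref{T:fin_duality:finite} by verifying each of its hypotheses for the morphism $f$. First I would check concentration: since $f$ is finite it is affine and in particular representable, so Lemma~\ref{lem:conc}\itemref{lem:conc:item:rep} (combined with quasi-compactness and quasi-separatedness, which are automatic for finite morphisms) shows that $f$ is concentrated. Next I would check that $f_*\Orb_X$ is perfect on $Y$. Because $f$ is affine, flat, and of finite presentation, $f_*\Orb_X$ is a flat and finitely presented quasi-coherent $\Orb_Y$-module, and hence defines a perfect complex in $\DQCOH(Y)$ by the example immediately preceding Lemma~\ref{L:basic_props_perfect}. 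Finally, faithful flatness of $f$ implies that $f$ is surjective.

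With these hypotheses in hand, Theorem~\ref{T:fin_duality}\itemref{T:fin_duality:finite} immediately supplies the formula
\[
f^\times(\cplx{M}) \simeq \bar{f}^*\SRHom_{\Orb_Y}(f_*\Orb_X,\cplx{M}),
\]
the right-adjointness of $f^\times$ to $\RDERF\QCPSH{f}$, the projection-type identity $f^\times(\Orb_Y)\tensor^{\LDERF}_{\Orb_X} f^*(-) \simeq f^\times(-)$, the preservation of small coproducts (which also follows, independently, from the fact that $\bar{f}^*$ is an equivalence by Corollary~\ref{C:aff_equiv_der} and that $\SRHom_{\Orb_Y}(f_*\Orb_X,-)$ preserves coproducts on $\DQCOH(Y)$ by Lemma~\ref{L:basic_props_perfect}\itemref{L:basic_props_perfect:dual} since $f_*\Orb_X$ is perfect), and compatibility with flat base change on $Y$. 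The surjectivity of $f$ then delivers the conservativity clause from the last sentence of Theorem~\ref{T:fin_duality}\itemref{T:fin_duality:finite}.

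Honestly there is no real obstacle here: the corollary is a convenient repackaging of Theorem~\ref{T:fin_duality}\itemref{T:fin_duality:finite} in the setting where the finiteness, flatness, and finite presentation of $f$ ensure by themselves that $f_*\Orb_X$ is perfect, so that the abstract hypothesis ``$f_*\Orb_X$ perfect'' of that theorem needs no separate verification, while the additional faithful flatness upgrades the surjectivity assumption needed for conservativity.
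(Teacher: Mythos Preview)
Your proposal is correct and matches the paper's approach: the paper gives no separate proof for this corollary, as it is an immediate consequence of Theorem~\ref{T:fin_duality}\itemref{T:fin_duality:finite} once one observes that a finite, faithfully flat morphism of finite presentation is concentrated (being affine hence representable, via Lemma~\ref{lem:conc}\itemref{lem:conc:item:rep}), has $f_*\Orb_X$ perfect (the example preceding Lemma~\ref{L:basic_props_perfect}), and is surjective. Your write-up makes these verifications explicit, which is entirely appropriate.
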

\subsection{Coherent functors}
Combining the strong projection formula of Proposition
\ref{prop:strong_proj_form} with the characterization of compact
objects in Lemma \ref{lem:char_compact_stack}, we can prove most of
Theorem \ref{mainthm:coherent_functor}. 
\begin{corollary}\label{cor:coherent_functor}
  Let $A$ be a noetherian ring and let $\pi \colon X \to \spec A$ be a
  morphism of finite type between noetherian algebraic stacks.
  Suppose that
  \begin{enumerate}
  \item for every $i\geq 0$ and $\shv{M}\in \COH(X)$, the cohomology
    $H^i(X_{\lisset},\shv{M})$ is a coherent $A$-module
    (e.g., $\pi$ proper); and
  \item $\DQCOH(X)$ is compactly generated.
  \end{enumerate}
  Then, for every $\cplx{F} \in \DQCOH(X)$ and $\cplx{G} \in \DCAT_{\COH}^b(X)$, the functor
  \[
  \Hom_{\Orb_X}(\cplx{F},\cplx{G} \tensor_{\Orb_X}^{\LDERF} \LDERF
  \QCPBK{\pi}(-)) \colon \MOD(A) \to \MOD(A)
  \]
  is coherent.
\end{corollary}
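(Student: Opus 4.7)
The plan is to reduce, via the strong projection formula, the functor to one governed by a bounded pseudo-coherent complex of $A$-modules, for which coherence is essentially a finite-free-resolution argument. The case when $\cplx{F}$ is compact is the heart; the general case is to be obtained by a closure argument using compact generation.

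First I would treat the case where $\cplx{F}$ is compact, hence perfect by Lemma~\ref{L:compact_conc}\itemref{L:compact_conc:c_perf}. Proposition~\ref{prop:strong_proj_form} produces a natural quasi-isomorphism
\[
\RHom_{\Orb_X}(\cplx{F},\cplx{G}\tensor^{\LDERF}_{\Orb_X}\LDERF\QCPBK{\pi}N) \simeq K\tensor^{\LDERF}_A N,
\]
where $K := \RHom_{\Orb_X}(\cplx{F},\cplx{G})$. By Lemma~\ref{L:basic_props_perfect}\itemref{L:basic_props_perfect:dual} the complex $\SRHom_{\Orb_X}(\cplx{F},\cplx{G})\simeq \SRHom_{\Orb_X}(\cplx{F},\Orb_X)\tensor^{\LDERF}_{\Orb_X}\cplx{G}$ lies in $\DCAT^b_{\COH}(X)$, and the Ext-boundedness of compact objects (Lemma~\ref{lem:char_compact_stack}) together with hypothesis~(1) then gives, via the hypercohomology spectral sequence, that $K\in\DCAT^b_{\COH}(A)$.

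Since $A$ is noetherian and $K$ is bounded pseudo-coherent, it is quasi-isomorphic to a bounded-above complex $P^\bullet$ of finite free $A$-modules. Truncating $P^\bullet$ sufficiently far below degree $0$ does not affect $H^0(P^\bullet\tensor_A N)$, and using $P^i\tensor_A N\cong\Hom_A((P^i)^\vee,N)$ for the remaining finite free terms one exhibits the functor $N\mapsto H^0(K\tensor^{\LDERF}_A N)$ as a cokernel $\coker\bigl(\Hom_A(M_2,N)\to\Hom_A(M_1,N)\bigr)$ with $M_1,M_2$ finite free---the required coherent form.

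For general $\cplx{F}\in\DQCOH(X)$, I would use hypothesis~(2). The same argument applied in every degree shows that for compact $\cplx{F}$, each functor $N\mapsto\Ext^i_X(\cplx{F},\cplx{G}\tensor^{\LDERF}_{\Orb_X}\LDERF\QCPBK{\pi}N)$ is coherent. The full subcategory of $\DQCOH(X)$ consisting of those $\cplx{F}$ with this property for every $i$ is closed under shifts, triangles, and retracts, since coherent functors on $\MOD(A)$ form an abelian subcategory closed under extensions; by the previous step it contains the compact generators of $\DQCOH(X)$. The main obstacle is extending beyond this thick closure to all of $\DQCOH(X)$: the naive presentation of $\cplx{F}$ as a homotopy colimit of compacts produces Milnor-type inverse limits of coherent functors, which need not be coherent. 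I expect to address this by exploiting the uniform upper bound on the cohomological degrees of $\cplx{G}\tensor^{\LDERF}_{\Orb_X}\LDERF\QCPBK{\pi}N$ coming from boundedness of $\cplx{G}$, together with a Postnikov-type truncation of $\cplx{F}$ that reduces each bounded piece to a finite combination of compacts.
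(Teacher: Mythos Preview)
Your treatment of the compact case is essentially the paper's: strong projection formula, perfectness giving $\SRHom_{\Orb_X}(\cplx{F},\cplx{G})\in\DCAT^b_{\COH}(X)$, hypothesis~(1) together with Lemma~\ref{lem:char_compact_stack} forcing $K\in\DCAT^b_{\COH}(A)$, and then the standard coherent-functor argument for such $K$. (One minor imprecision: $H^0(K\tensor^{\LDERF}_A N)$ is a subquotient of $\Hom_A((P^0)^\vee,N)$, not literally a single cokernel; but coherent functors form an abelian subcategory, so this is harmless.)

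The gap is in the passage to arbitrary $\cplx{F}$. You correctly note that closure under shifts, triangles, and retracts yields only the thick closure of the compacts, which is far from all of $\DQCOH(X)$. Your proposed fix---a Postnikov truncation of $\cplx{F}$ reducing each bounded piece to a finite combination of compacts---cannot work: a truncation of $\cplx{F}$ is still a general bounded complex, and already a single non-finitely-generated quasi-coherent sheaf lies outside the thick closure of the compacts.

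The missing observation is that coherent functors $\MOD(A)\to\MOD(A)$ are closed under \emph{arbitrary products}: if $F_\lambda(N)=\coker\bigl(\Hom_A(M_{2,\lambda},N)\to\Hom_A(M_{1,\lambda},N)\bigr)$, then since products in $\MOD(A)$ are exact one has
\[
\prod_\lambda F_\lambda(N)=\coker\Bigl(\Hom_A\bigl(\textstyle\bigoplus_\lambda M_{2,\lambda},N\bigr)\to\Hom_A\bigl(\textstyle\bigoplus_\lambda M_{1,\lambda},N\bigr)\Bigr).
\]
Because $\Hom_{\Orb_X}\bigl(\bigoplus_\lambda\cplx{F}_\lambda,-\bigr)\cong\prod_\lambda\Hom_{\Orb_X}(\cplx{F}_\lambda,-)$, the full subcategory $\mathscript{T}\subseteq\DQCOH(X)$ of those $\cplx{F}$ for which the functor is coherent (for every $\cplx{G}\in\DCAT^b_{\COH}(X)$) is therefore closed under small coproducts as well as shifts and triangles. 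Hence $\mathscript{T}$ is a localizing subcategory containing the compact objects, and Corollary~\ref{cor:thomason_gens} immediately gives $\mathscript{T}=\DQCOH(X)$---no Milnor sequences or truncation arguments are needed.
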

\begin{proof}
  We begin by observing that the coherent functors $\MOD(A) \to
  \MOD(A)$ constitute a full abelian subcategory of the category of
  $A$-linear functors, which is closed under products (where everything
  is computed ``pointwise'') \cite[Ex.~3.9]{hallj_coho_bc}. Let
  $\mathscript{T} \subseteq \DQCOH(X)$ denote the full subcategory
  with objects those $\cplx{F} \in \DQCOH(X)$ where the functor
  $\Hom_{\Orb_X}(\mathcal{F},\mathcal{G}\tensor^{\LDERF}_{\Orb_X}
  \LDERF\QCPBK{\pi}(-))$ is coherent for every $\mathcal{G} \in
  \DCAT^b_{\COH}(X)$. In particular, $\mathscript{T}$ is closed under
  small coproducts, shifts, and triangles. By Corollary
  \ref{cor:thomason_gens}, it is enough to prove that $\mathscript{T}$
  contains the compact objects of $\DQCOH(X)$. If $\mathcal{Q} \in
  \DQCOH(X)$ is compact, then the strong projection formula
  (Proposition \ref{prop:strong_proj_form}) implies that there is a
  natural quasi-isomorphism:
  \[
  \RHom_{\Orb_X}(\cplx{Q},\cplx{G}) \otimes_A^{\LDERF} I
  \homotopic 
  \RHom_{\Orb_X}(\cplx{Q},\cplx{G}\otimes_{\Orb_X}^{\LDERF}
  \LDERF\QCPBK{\pi} I).
  \]
  Since $\mathcal{Q}$ is compact, it is perfect (Lemma
  \ref{L:compact_conc}\itemref{L:compact_conc:c_perf}) and so $\SRHom_{\Orb_X}(\mathcal{Q},\mathcal{G}) \in
  \DCAT_{\COH}^b(X)$. The assumption on preservation of coherence implies
  that $\RDERF \QCPSH{\pi}$ sends $\DCAT_{\COH}^+(X)$ to $\DCAT_{\COH}^+(A)$.
  In particular, $\RHom_{\Orb_X}(\mathcal{Q},\mathcal{G}) \simeq \RDERF \QCPSH{\pi} \SRHom_{\Orb_X}(\mathcal{Q},\mathcal{G})\in \DCAT_{\COH}^+(A)$. By Lemma
  \ref{lem:char_compact_stack} we also have
  $\RHom_{\Orb_X}(\mathcal{Q},\mathcal{G}) \in \DCAT^b(A)$. Thus the
  functor
  $\Hom_{\Orb_X}(\mathcal{Q},\mathcal{G}\otimes_{\Orb_X}^{\LDERF}\LDERF
  \QCPBK{\pi}(-))$ is coherent \cite[Ex.~3.13]{hallj_coho_bc} and we
  deduce the result.
\end{proof}
\section{Presheaves of triangulated categories}\label{sec:presheaves-tri-cats}
Throughout this section we fix a small category $\mathscript{D}$ that admits all finite limits. Let $\TRICAT$ denote the $2$-category of
triangulated categories. A
\fndefn{$\mathscript{D}$-presheaf of triangulated categories} is a 
$2$-functor $\mathcal{T} \colon \mathscript{D}^\opp \to \TRICAT$.

Given a
morphism $f \colon U \to V$ in $\mathscript{D}$, there is an induced
pullback functor $f^*_{\mathcal{T}} \colon \mathcal{T}(V) \to
\mathcal{T}(U)$. When there is no cause for confusion, we will
suppress the subscript $\mathcal{T}$ from $f^*_{\mathcal{T}}$. For any
such $f$ (not necessarily a monomorphism), we let
\[
\mathcal{T}_{V\setminus U}(V) = \ker (f^* \colon \mathcal{T}(V) \to
\mathcal{T}(U)).
\]
We say that $\mathcal{T}$ \emph{has adjoints} if for every morphism
$f \colon U \to V$
  in $\mathscript{D}$, the pullback functor $f^* \colon \mathcal{T}(V)
  \to \mathcal{T}(U)$ 
  admits a right adjoint $f_* \colon \mathcal{T}(U) \to
  \mathcal{T}(V)$.

  \begin{definition}\label{defn:tflat}
    Suppose that $\mathcal{T}$ is a $\mathscript{D}$-presheaf of
    triangulated categories with adjoints. Let $f\colon U \to V$ be a
    morphism in $\mathscript{D}$ and let $N \in \mathcal{T}(V)$. We
    denote by $\eta^f_N\colon N \to f_*f^*N$ the unit of the
    adjunction.  A morphism $g \colon W \to V$ in $\mathscript{D}$ is
    \fndefn{$\mathcal{T}$-preflat} if for every cartesian square in
    $\mathscript{D}$:
    \[
    \xymatrix{U_W \ar[d]_{f_W} \ar[r]^{g_U} & U \ar[d]^f\\ W \ar[r]^g
      & V,}
    \]
    the natural transformation $g^*f_* \to (f_W)_*(g_U)^*$ is an
    isomorphism. A morphism $g \colon W \to V$ in $\mathscript{D}$ is
    \fndefn{$\mathcal{T}$-flat} if for every morphism $V' \to V$, the
    pullback $g'\colon W' \to V'$ of $g$ is $\mathcal{T}$-preflat.
  \end{definition}
Note that because monomorphisms are stable under base change, 
$\mathcal{T}$-flat monomorphisms are stable under base change. 
\begin{example}\label{ex:dmodqc_defn}
  Let $Y$ be an algebraic stack that is quasi-compact and quasi-separated. 
  Let $\REP[\mathrm{fp}]{Y}$ denote the category of $1$-morphisms $X
  \to Y$ that are representable and of finite presentation.
  The category $\REP[\mathrm{fp}]{Y}$ is small. 
  We have a $\REP[\mathrm{fp}]{Y}$-presheaf of triangulated 
  categories $\DQCOH \colon (\REP[\mathrm{fp}]{Y})^\opp \to \TRICAT$, that sends $X
  \to Y$ to $\DQCOH(X)$ and a $1$-morphism $f \colon X' \to X$ in 
  $\REP[\mathrm{fp}]{Y}$ to $\LDERF \QCPBK{f} \colon \DQCOH(X) \to
  \DQCOH(X')$.
  The functor $\LDERF \QCPBK{f}$ admits a right adjoint $\RDERF
  \QCPSH{f}$ so
  $\DQCOH$ is a presheaf with adjoints.

  By Theorem \ref{thm:fcd_coprod}\itemref{thm:fcd_coprod:item:fl_bc},
  if $f$ is a flat morphism, then it is $\DQCOH$-flat. Conversely, if
  $f\colon X' \to X$ is $\DQCOH$-flat, then $f$ is flat. Indeed, this
  is local on the source and target of $f$, so it is
  sufficient to show that if $f\colon \spec B \to \spec A$ is
  $\DQCOH$-preflat, then $B$ is a flat $A$-algebra. For this, we note
  that if $I$ is an ideal of $A$, then corresponding to $i\colon \spec
  (A/I) \to \spec A$ we see that there is a quasi-isomorphism $(A/I)
  \tensor_{A}^\LDERF B \homotopic (B/IB)[0]$. That is, for all $n>0$ and
  ideals $I$ of $A$ we have that $\Tor^n_A(B,A/I)
  = 0$---hence $B$ is flat over $A$. It follows that the $\DQCOH$-flat
  monomorphisms are the quasi-compact open immersions~\cite[IV.17.9.1]{EGA}.
\end{example}
\begin{example}
  Our notion of $\mathcal{T}$-flatness is not always optimal. In particular, it
  is weaker than expected in the derived setting. If $\mathcal{T}$ is a
  presheaf of triangulated categories \emph{with $t$-structures}, then a better
  definition is that $f$ is $\mathcal{T}$-flat if $f^*$ is $t$-exact.

  To illustrate this, suppose $\mathscript{D}=\mathsf{SCR}^\opp$ is the
  $\infty$-category of affine derived schemes, that is, the opposite category to the
  $\infty$-category of simplicial commutative rings. Further, let $\mathcal{T}=\MOD(-)$
  be the functor that takes a simplicial commutative ring $A$ to the stable 
  $\infty$-category $\MOD(A)$ of (not necessarily connective) $A$-modules. Then every
  morphism in $\mathscript{D}$ is $\mathcal{T}$-flat whereas $\spec B\to
  \spec A$ is flat exactly when the pullback $B\ltensor_A -\colon \MOD(A)\to
  \MOD(B)$ is $t$-exact. Nevertheless, just as in the non-derived case, the
  finitely presented $\mathcal{T}$-flat monomorphisms are exactly the
  quasi-compact open immersions since every monomorphism of derived schemes is
  formally \'etale~\cite[2.2.2.5 (2)]{MR2394633}.
\end{example}
\begin{lemma}\label{lem:adm_basic}
  Let $\mathcal{T}$ be a
  $\mathscript{D}$-presheaf of triangulated categories with adjoints. Fix a commutative
  diagram in $\mathscript{D}$:
    \[
    \xymatrix@-1ex{W_V \ar[d]_{f_V} \ar[r]^{j_W} & W \ar[d]^f\\ V \ar[r]^j &
      X.}
    \] 
  \begin{enumerate}
  \item\label{lem:adm_basic:item:ff_bc} If $j$ is a
    $\mathcal{T}$-preflat monomorphism in $\mathscript{D}$, then the adjunction $j^*j_* \to
    \ID{\mathcal{T}(V)}$ is an isomorphism.
    \item \label{lem:adm_basic:item:pb_supp} $f^*\colon
      \mathcal{T}_{X\setminus V}(X) \to \mathcal{T}(W)$ factors
      through $\mathcal{T}_{W\setminus W_V}(W)$. 
    \item \label{lem:adm_basic:item:im_supp} If the diagram is
      cartesian and $j$ is $\mathcal{T}$-preflat, then the functor $f_*\colon
      \mathcal{T}_{W\setminus W_V}(W) \to \mathcal{T}(X)$ factors
      through $\mathcal{T}_{X\setminus V}(X)$ and is right adjoint
      to $f^* \colon \mathcal{T}_{X\setminus V}(X) \to
      \mathcal{T}_{W\setminus W_V}(W)$. 
  \end{enumerate}
\end{lemma}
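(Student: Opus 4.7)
For part (1), I would exploit that $j$ being a monomorphism forces the diagonal $\Delta\colon V\to V\times_X V$ to be an isomorphism, so that the two projections $p_1,p_2\colon V\times_X V\to V$ are both equal to $\Delta^{-1}$ and are in particular isomorphisms in $\mathscript{D}$. Applying the $\mathcal{T}$-preflatness of $j$ to the cartesian square
\[
\xymatrix@-2ex{V\times_X V\ar[r]^-{p_1}\ar[d]_{p_2} & V\ar[d]^j\\ V\ar[r]_j & X}
\]
produces a natural isomorphism $j^*j_*\simeq (p_2)_*(p_1)^*$, and since $p_1=p_2$ is an isomorphism the right-hand side is canonically $\ID{\mathcal{T}(V)}$. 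The only mildly subtle point is matching this isomorphism to the counit of adjunction: I would do this by the usual Beck--Chevalley compatibility, noting that the Beck--Chevalley transformation is built from the unit and counit in such a way that when the horizontal arrows are isomorphisms the result collapses to the counit itself.

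For (2a), the argument is essentially formal: from commutativity of the given square one has a natural isomorphism $j_W^*f^*\simeq f_V^*j^*$, so if $E\in\mathcal{T}_{X\setminus V}(X)$ then $j_W^*f^*E\simeq f_V^*j^*E\simeq f_V^*(0)\simeq 0$, which means $f^*E\in\mathcal{T}_{W\setminus W_V}(W)$.

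For (2b), the extra hypotheses that the square is cartesian and $j$ is $\mathcal{T}$-preflat give the base change isomorphism $j^*f_*\simeq (f_V)_*j_W^*$. Applied to $G\in\mathcal{T}_{W\setminus W_V}(W)$, this yields $j^*f_*G\simeq (f_V)_*j_W^*G\simeq 0$, so $f_*G\in\mathcal{T}_{X\setminus V}(X)$ and the restricted functor is well defined. The adjunction between the restrictions then follows at once from the ambient adjunction: since the inclusions $\mathcal{T}_{X\setminus V}(X)\hookrightarrow\mathcal{T}(X)$ and $\mathcal{T}_{W\setminus W_V}(W)\hookrightarrow\mathcal{T}(W)$ are fully faithful, for $E\in\mathcal{T}_{X\setminus V}(X)$ and $G\in\mathcal{T}_{W\setminus W_V}(W)$ one has
\[
\Hom(f^*E,G)=\Hom_{\mathcal{T}(W)}(f^*E,G)\cong\Hom_{\mathcal{T}(X)}(E,f_*G)=\Hom(E,f_*G),
\]
and this is manifestly natural.

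The only real obstacle is the bookkeeping in (1) needed to identify the Beck--Chevalley isomorphism with the counit; once that is granted, everything else is a direct consequence of preflatness and the commutativity (or cartesianness) of the chosen square.
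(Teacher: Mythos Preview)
Your proposal is correct and follows essentially the same approach as the paper: the paper uses the cartesian square with identities $V\to V$ (which is your $V\times_X V\cong V$ square after identifying the diagonal), and the arguments for (2a) and (2b) are identical. If anything, you are slightly more careful than the paper in explicitly noting that the base-change isomorphism in (1) really is the counit and that the adjunction in (2b) is inherited from the ambient one by restriction to full subcategories; the paper leaves both of these implicit.
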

\begin{proof}
If $j \colon V \to X$ is a monomorphism, then the
    commutative diagram:
    \[
    \xymatrix@-2ex{V \ar[r]^{\ID{V}} \ar[d]_{\ID{V}}& V
      \ar[d]^{j} \\ V \ar[r]^{j} & X }
    \]
    is cartesian, whence  $j^*j_* \homotopic
    (\ID{V})_*(\ID{V})^* \homotopic \ID{\mathcal{T}(V)}$. 

By functoriality \itemref{lem:adm_basic:item:pb_supp} is
        trivial. For \itemref{lem:adm_basic:item:im_supp}, given
        $M \in 
    \mathcal{T}_{W\setminus W_V}(W)$, then $j^*f_*M \homotopic
    (f_V)_*(j_W)^*M \homotopic 0$, hence $f_*M \in
    \mathcal{T}_{X\setminus V}(X)$. 
\end{proof}
\begin{definition}\label{defn:mv}
Fix a $\mathscript{D}$-presheaf of
triangulated categories $\mathcal{T}$ with adjoints.
A \fndefn{Mayer--Vietoris $\mathcal{T}$-square}
is a cartesian diagram in $\mathscript{D}$:
\[
\xymatrix{U' \ar[d]_{f_U} \ar[r]^{j'} & X' \ar[d]^f \\ U
  \ar[r]^j & X,}
\] 
satisfying the following three conditions:
\begin{enumerate}
\item\label{defn:mv:item:1} $j$ is a $\mathcal{T}$-flat monomorphism,
\item\label{defn:mv:item:2} the natural transformation $f^*j_* \to j'_*f_U^*$ is an
  isomorphism, and
\item\label{defn:mv:item:3} the induced functor
 $f^* \colon \mathcal{T}_{X\setminus U}(X) \to \mathcal{T}_{X'\setminus U'}(X')$
  is an equivalence of categories.
\end{enumerate}
\end{definition}
Condition~\itemref{defn:mv:item:2} for a Mayer--Vietoris $\mathcal{T}$-square is satisfied
if $f$ is a $\mathcal{T}$-(pre)flat morphism. By tor-independent base change (Corollary \ref{cor:tor-independent-bc}), if $\mathcal{T} = \DQCOH$, then condition~\itemref{defn:mv:item:2} is satisfied for every $f$. In~\cite{mayer-vietoris}, we
will consider applications of these Mayer--Vietoris triangles to a result
of Moret-Bailly \cite{MR1432058}. For this intended application, it is essential
that we permit $f$ to be non-flat. 
\begin{example}\label{ex:dmodqc_mv}
  We continue with Example \ref{ex:dmodqc_defn}. Let $f \colon X' \to X$ be a representable, quasi-compact, and quasi-separated \'etale neighborhood of a closed subset $|Z| \subseteq |X|$ with quasi-compact complement $|U|$. 
  Let $j \colon U \hookrightarrow X$ be the resulting quasi-compact open immersion. 
  Then the cartesian square:
  \[
  \xymatrix{U' \ar[d]_{f_U} \ar[r]^{j'} & X' \ar[d]^f \\ U
  \ar[r]^j & X,}
  \]
  is a Mayer--Vietoris $\DQCOH$-square.
  To see this, it remains to prove that the functor $\LDERF \QCPBK{f}$ induces the desired equivalence. 
  Now the exact functor $f^* \colon \MOD(X) \to \MOD(X')$ admits an exact left adjoint $f_! \colon \MOD(X') \to \MOD(X)$~\cite[\spref{03DI}]{stacks-project}.
  Explicitly, for $M\in \MOD(X')$ we have that $f_!M$ is the
  sheafification of the presheaf
  \[
  (V\to X) \longmapsto \bigoplus_{\mathclap{\phi \in \Hom_X(V,X')}} M(V \xrightarrow{\phi} X').
  \]
  Note that the natural map $M \to f^*f_!M$ is an isomorphism for all $M\in \MOD(X')$ such that $j'^*M = 0$.
  Also, if $N \in \MOD(X)$ and $j^*N = 0$, then the natural map $f_!f^*N \to N$ is an isomorphism.
  The exactness of the adjoint pair $(f_!,f^*)$ now gives an adjoint pair on the level of derived categories $(f_!,f^*) \colon \DCAT(X) \leftrightarrows \DCAT(X')$ and that the relations just given also hold on the derived category.
  Next, we observe that the restriction of $f^*$ to $\DQCOH(X)$ coincides with $\LDERF \QCPBK{f}$. 
  Thus it remains to prove that if $M \in \DQCOH(X')$ and $\LDERF \QCPBK{(j')}M  \homotopic 0$, then $f_!M \in \DQCOH(X)$. 
  The exactness of $f_!$ and $\LDERF \QCPBK{(j')}$ show that is sufficient to prove this result when $M$ is a quasi-coherent sheaf such that $(j')^*M = 0$. 
  Note that $(U \xrightarrow{j} X, X' \xrightarrow{f} X)$ is an \'etale cover of $X$, $j^*f_!M = (f_U)_!(j')^*M = 0$ and $f^*f_!M \cong M$.
  We deduce that \'etale locally $f_!M$ is quasi-coherent. 
  By descent $f_!M$ is quasi-coherent and the result is proved. For a different
  proof in a more general context, see~\cite[Prop.~4.2]{mayer-vietoris}.
\end{example}
\begin{example}[\'Etale cohomology]\label{ex:etale-coh}
  Let $Y$ be an algebraic stack that is quasi-compact and quasi-separated.  Let
  $\Lambda$ be a noetherian ring such that $\Lambda$ is torsion and $|\Lambda|$ is invertible on $Y$. We have the derived category
  $\DCART(Y,\Lambda)$ of lisse-\'etale $\Lambda$-modules on $Y$ with cartesian
  cohomology~\cite[12.10]{MR1771927}, \cite[\S\S2.1--2.2,
    Ex.~2.1.8]{MR2434692}. More generally, we have a $\REP[\mathrm{fp}]{Y}$-presheaf
  of triangulated categories $\DCART(-,\Lambda) \colon
  (\REP[\mathrm{fp}]{Y})^\opp \to \TRICAT$, that sends $X \to Y$ to
  $\DCART(X,\Lambda)$ and a $1$-morphism $f \colon X' \to X$ in
  $\REP[\mathrm{fp}]{Y}$ to $f^* \colon \DCART(X,\Lambda) \to
  \DCART(X',\Lambda)$. By smooth base change, $f_*$ takes cartesian sheaves to cartesian
  sheaves so the functor $f^*$ admits a right adjoint $\RDERF f_*$ on bounded below objects, i.e., the subpresheaf of bounded below objects
  $\DCART^+(-,\Lambda)$ is a presheaf with adjoints. Moreover, \'etale and smooth morphisms are
  $\DCART^+(-,\Lambda)$-flat. 

  If $f\colon X'\to X$ is an \'etale neighborhood of $|Z|\subseteq |X|$ as in
  the previous example, then the resulting square is a Mayer--Vietoris
  $\DCART^+(-,\Lambda)$-square. Indeed, if $i\colon Z\to X$ is a closed immersion
  for some scheme structure on $Z$, then $i^*$ and $i_*$ induces equivalences
  of categories $\DCAT^+_{\mathrm{cart},|X\setminus U|}(X,\Lambda)\cong \DCART^+(Z,\Lambda)$.

  We also have a subpresheaf $\DCONS(-,\Lambda)$ where
  $\DCONS(X,\Lambda)\subseteq \DCART(X,\Lambda)$ consists of the objects with
  constructible cohomology sheaves~\cite[18.6]{MR1771927},~\cite[\S\S2.1--2.2,
    Ex.~2.2.6]{MR2434692}. If $Y$ is quasi-excellent of finite dimension,
  $\Lambda=\Z/N\Z$ and $N$
  is invertible on $Y$, then $\DCONS(-,\Lambda)$ has adjoints (Deligne--Gabber's
  finiteness theorem) and an \'etale neighborhood is a Mayer--Vietoris
  $\DCONS(-,\Lambda)$-square.
\end{example}
\begin{example}\label{E:simplecat}
  Consider a $2$-commutative diagram of triangulated functors:
  \[
  \xymatrix{  \mathscript{R}' & \mathscript{S}' \ar[l]_{j'^*}\\
  \ar[u]^{r^*} \mathscript{R} & \mathscript{S} \ar[l]_{j^*}\ar[u]_{f^*}, }
  \]
  and assume that they all have right adjoints, which we will denote as $f_*$, $j_*$, $r_*$, 
  and $j'_*$, respectively. Let $\mathscript{K} = \ker(j^*)$ and $\mathscript{K}' = 
  \ker(j'^*)$.   Let $\mathscript{D}$ be the category consisting of the following objects and
  arrows:
  \[
  \xymatrix{R' \ar[r]^{j'} \ar[d]_{r} & S' \ar[d]^{f}\\ R \ar[r]^j & S.}
  \]
  There is a $\mathscript{D}$-presheaf of triangulated categories $\mathscript{T}$ with adjoints 
  such that $\mathscript{T}(S) = \mathscript{S}$ etc. The square above is a Mayer--Vietoris 
  $\mathscript{T}$-square if and only if the following conditions are satisfied:
  \begin{enumerate}
  \item\label{E:simplecat:bousfield} the natural transformations
    $j^*j_* \to \ID{}$ and $j'^*j'_* \to \ID{}$ are isomorphisms,
  \item\label{E:simplecat:fbc} the natural transformation
    $j^*f_* \to r_*j'^*$ is an isomorphism,
  \item\label{E:simplecat:tibc} the natural transformation
    $f^*j_* \to j'_*r^*$ is an isomorphism, and
  \item\label{E:simplecat:nhd} the induced functor
    $\mathscript{K} \to \mathscript{K}'$ is an equivalence of categories.
  \end{enumerate}
  Condition \itemref{E:simplecat:bousfield} implies that
  $\mathscript{R} = \mathscript{S}/\mathscript{K}$,
  $\mathscript{R}' = \mathscript{S}'/\mathscript{K}'$ and
  $\mathscript{K}$, $\mathscript{K}'$ are Bousfield subcategories of
  $\mathscript{S}$, $\mathscript{S}'$ respectively (Lemma
  \ref{lem:identify_quots} and \cite[Ch.~9]{MR1812507}).
\end{example}

Mayer--Vietoris $\mathcal{T}$-squares give rise to many nice properties.
In particular, we obtain a familiar distinguished triangle.
\begin{lemma}\label{lem:mv_triangle}
  Let $\mathcal{T}$ be a
  $\mathscript{D}$-presheaf of triangulated categories with adjoints.
  Consider a Mayer--Vietoris
  $\mathcal{T}$-square in $\mathscript{D}$:
  \[  
  \xymatrix{U' \ar[d]_{f_U} \ar[r]^{j'} & X' \ar[d]^f \\ U \ar[r]^j & X.}  
  \] 
  \begin{enumerate}
  \item\label{item:mv_triangle:MV} If $N\in \mathcal{T}(X)$, then
    there is a unique map $d$ that makes the triangle:
    \[ 
    \xymatrix@C2pc{%
      N \ar[r]^-{\begin{psmallmatrix}\eta^j_N\\\eta^f_N\end{psmallmatrix}}
       & j_*j^*N \oplus f_*f^*N \ar[rrr]^-{\begin{psmallmatrix}\eta^f_{j_*j^*N}
       & -f_*f^*\eta^j_N\end{psmallmatrix}}
       &&& f_*f^*j_*j^*N\ar[r]^-{d}
       & N[1] }
    \]
    distinguished. Moreover, this $d$ is functorial in $N$.
  \item\label{item:mv_triangle:adj} Let $M \in \mathcal{T}_{X'\setminus
      U'}(X')$ and let $N \in \mathcal{T}(X)$. Then there is a natural
    bijection:
    \[
    \Hom_{\mathcal{T}(X)}(f_*M,N) \cong
    \Hom_{\mathcal{T}(X')}(M,f^*N). 
    \]
  \item\label{item:mv_triangle:desc} Given $N_U \in \mathcal{T}(U)$, $N' \in
    \mathcal{T}(X')$, and an isomorphism $\delta \colon j'^*N' \to
    f_U^*N_U$, define $N$ by a distinguished triangle in
    $\mathcal{T}(X)$: 
    \[
    \xymatrix{%
      N \ar[r]
       & j_*N_U \oplus f_*N' \ar[rr]^{\begin{psmallmatrix}\eta^f_{j_*N_U} & -\alpha\end{psmallmatrix}}
       && f_*f^*j_*N_U\ar[r] & N[1],}
    \]
    where $\alpha \colon f_*N' \to f_*f^*j_*N_U$ is the composition:
    \[
    f_*N' \xrightarrow{f_*\eta^{j'}_{N'}} f_*j'_*j'^*N'
    \xrightarrow{f_*j'_*\delta} f_*j'_*f_U^*N_U \cong f_*f^*j_*N_U.
    \]
    Then the induced maps $j^*N
    \to N_U$ and $f^*N \to N'$ are isomorphisms. 
  \item\label{item:mv_triangle:comp} If $N \in \mathcal{T}(X)$ satisfies 
    $j^*N \in \mathcal{T}(U)^c$, $f^*N \in
    \mathcal{T}(X')^c$, and $f_U^*j^*N \in
    \mathcal{T}(U')^c$, then $N \in
    \mathcal{T}(X)^c$. 
  \end{enumerate}
\end{lemma}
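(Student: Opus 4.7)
My plan is to treat the four parts as interlocking applications of the Mayer--Vietoris equivalence $f^*\colon \mathcal{T}_{X\setminus U}(X) \simeq \mathcal{T}_{X'\setminus U'}(X')$, the base-change isomorphism $f^*j_*\homotopic j'_*f_U^*$, and their adjoints. For \itemref{item:mv_triangle:MV}, complete $\eta^j_N\colon N\to j_*j^*N$ to a distinguished triangle $N\to j_*j^*N\to C\to N[1]$. Since $j$ is a $\mathcal{T}$-preflat monomorphism, Lemma~\ref{lem:adm_basic}\itemref{lem:adm_basic:item:ff_bc} forces $j^*\eta^j_N$ to be an isomorphism, so $C\in \mathcal{T}_{X\setminus U}(X)$. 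By Lemma~\ref{lem:adm_basic}\itemref{lem:adm_basic:item:im_supp} together with the Mayer--Vietoris equivalence assumption, $f^*$ and $f_*$ restrict to mutually quasi-inverse equivalences between the supported subcategories, so the unit $\eta^f_C\colon C\homotopic f_*f^*C$ is an isomorphism. Applying $f_*f^*$ to $N\to j_*j^*N\to C$ produces the triangle $f_*f^*N\to f_*f^*j_*j^*N\to f_*f^*C$, and the identification $f_*f^*C\homotopic C$ shows that the two horizontal cofibres of
\[
\xymatrix{N\ar[r]\ar[d] & j_*j^*N\ar[d] \\ f_*f^*N\ar[r] & f_*f^*j_*j^*N}
\]
agree canonically. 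Hence the square is homotopy Cartesian, the claimed triangle is distinguished with $d$ equal to the natural composite $f_*f^*j_*j^*N\to C\to N[1]$, and this $d$ is functorial in $N$.

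For \itemref{item:mv_triangle:adj}, apply $\Hom_{\mathcal{T}(X)}(f_*M,-)$ to the triangle of \itemref{item:mv_triangle:MV}. Using $j^*f_*\homotopic (f_U)_*j'^*$ (the right adjoint of the base-change isomorphism) together with the hypothesis $j'^*M\homotopic 0$, both $\Hom(f_*M, j_*j^*N)$ and $\Hom(f_*M, f_*f^*j_*j^*N)$ vanish. For the remaining summand, since $f^*$ and $f_*$ restrict to inverse equivalences on supported subcategories, the counit $f^*f_*M\homotopic M$ is an isomorphism, so
\[
\Hom(f_*M, f_*f^*N)\homotopic \Hom(f^*f_*M, f^*N)\homotopic \Hom(M, f^*N),
\]
and the long exact sequence degenerates to the asserted bijection.

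For \itemref{item:mv_triangle:desc}, applying $j^*$ to the defining triangle and using $j^*j_*\homotopic\ID$, $j^*f_*\homotopic (f_U)_*j'^*$, and the gluing datum $\delta$ yields the triangle
\[
j^*N\to N_U\oplus (f_U)_*f_U^*N_U\xrightarrow{(\eta^{f_U}_{N_U},\,-\ID)} (f_U)_*f_U^*N_U\to j^*N[1],
\]
which splits and identifies $j^*N\homotopic N_U$. For $f^*N\homotopic N'$, let $\gamma\colon f^*N\to N'$ be the morphism adjoint to the projection $b\colon N\to f_*N'$ and let $K$ be its cone. A direct computation using $\delta$ and the naturality of the counit of $(f^*,f_*)$ shows that $j'^*\gamma$ coincides with $\delta^{-1}$ under the identification $j^*N\homotopic N_U$, so $K\in \mathcal{T}_{X'\setminus U'}(X')$. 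Since $f_*$ restricts to an equivalence, hence is conservative, on that subcategory, it suffices to prove $f_*\gamma$ is an isomorphism. This is done by comparing the Mayer--Vietoris triangle of \itemref{item:mv_triangle:MV} for $N$ (rewritten via $j^*N\homotopic N_U$) with the defining triangle: both have the shape $N\to j_*N_U\oplus(\cdot)\to f_*f^*j_*N_U$, and the comparison morphism with identities on the outer terms and $(\ID, f_*\gamma)$ in the middle commutes by the triangle identity $f_*\gamma\circ\eta^f_N=b$, so the five-lemma makes $f_*\gamma$ an isomorphism.

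For \itemref{item:mv_triangle:comp}, apply $\Hom_{\mathcal{T}(X)}(N,-)$ to the Mayer--Vietoris triangle for $\bigoplus_\lambda M_\lambda$ provided by \itemref{item:mv_triangle:MV} and to the direct sum of those for each $M_\lambda$. Adjunction rewrites each non-$N$ corner as a $\Hom$ out of $j^*N$, $f^*N$, or $j'^*f^*N\homotopic f_U^*j^*N$, all compact by hypothesis. Since $j^*$ and $f^*$ preserve small coproducts (being left adjoints), each corner map out of the direct sum is an isomorphism, and the five-lemma applied to the two long exact $\Hom$-sequences delivers $N\in\mathcal{T}(X)^c$. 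The main obstacle is \itemref{item:mv_triangle:desc}: the identification $j'^*\gamma\homotopic\delta^{-1}$ and the verification that the middle square in the comparison of triangles commutes both demand careful bookkeeping with units, counits, and the base-change natural transformation.
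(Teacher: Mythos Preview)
Your overall strategy matches the paper's, and parts \itemref{item:mv_triangle:adj} and \itemref{item:mv_triangle:comp} are fine as written. Two genuine gaps remain.

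In \itemref{item:mv_triangle:MV} you construct a functorial $d$ but never prove \emph{uniqueness}: nothing you wrote excludes a second $d'$ making the same triangle distinguished. The paper handles this as follows. Given such a $d'$, TR3 produces an endomorphism $\theta$ of $f_*f^*j_*j^*N$ fitting into a morphism of triangles with identities on the other three vertices. Adjoining the naturality square for $\eta^j$ and using that $j$ is a $\mathcal{T}$-flat monomorphism (so $j^*j_*\simeq\ID$, whence the horizontal maps in the $j_*j^*$-square become isomorphisms) forces $\theta=\ID$, hence $d'=d$.

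In \itemref{item:mv_triangle:desc} you correctly isolate the obstacle but do not resolve it. The triangle identity $f_*\gamma\circ\eta^f_N=b$ only gives commutativity of the \emph{left} square in your comparison of triangles; the \emph{right} square requires the identity $\alpha\circ f_*\gamma = f_*f^*(v_j^\vee)$, which you do not verify. The paper circumvents this bookkeeping entirely. After establishing $v_j\colon j^*N\simeq N_U$ (your split-triangle argument is a valid alternative to the paper's octahedral one), it invokes TR3 to produce \emph{some} map $\theta$ on the third vertex making
\[
\xymatrix{
N \ar[r] \ar@{=}[d] & j_*j^*N \oplus f_*f^*N \ar[r] \ar[d]^{j_*v_j\oplus f_*v_f} & f_*f^*j_*j^*N \ar[d]^{\theta} \ar[r] & N[1] \ar@{=}[d] \\
N \ar[r] & j_*N_U \oplus f_*N' \ar[r] & f_*f^*j_*N_U \ar[r] & N[1]
}
\]
a morphism of triangles (only the left square, which you already checked, is needed for this). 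Then, exactly as in the uniqueness argument for \itemref{item:mv_triangle:MV}, applying $j_*j^*$ to the middle square and using that the horizontal maps become isomorphisms identifies $\theta=f_*f^*j_*v_j$. Since $v_j$ is already an isomorphism, so is $\theta$, and the two-out-of-three property for morphisms of triangles yields that $f_*v_f$ is an isomorphism. This $j_*j^*$-trick is the device missing from both of your gaps.

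One small correction: the isomorphism $j^*f_*\simeq (f_U)_*j'^*$ that you use is not literally ``the right adjoint of the base-change isomorphism $f^*j_*\simeq j'_*f_U^*$''. It holds because $j$ is $\mathcal{T}$-flat: apply the definition of $\mathcal{T}$-preflat to the cartesian square with $j$ along the bottom and $f$ on the right.
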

\begin{proof} An equivalent formulation of \itemref{item:mv_triangle:MV} is that
\[
\xymatrix@C3pc{%
  N \ar[r]^{\eta^j_N}\ar[d]^{\eta^f_N}
    & j_*j^*N\ar[d]^{\eta^f_{j_*j^*N}} \\
  f_*f^*N \ar[r]^-{f_*f^*\eta^j_N}
    & f_*f^*j_*j^*N}
\]
is a homotopy cartesian square~\cite[Def.~1.4.1]{MR1812507} whose differential
$d$ is unique and is functorial in $N$. To see that the
square is cartesian, first choose $C$ such that we have a distinguished
triangle:
\[
\xymatrix{C \ar[r]^l & N \ar[r]^-{\eta^j_N} & j_*j^*N \ar[r]^m & C[1]. }
\]
Since $j$ is a $\mathcal{T}$-flat monomorphism, $j^*\eta^j_N$ is
an isomorphism (Lemma
\ref{lem:adm_basic}\itemref{lem:adm_basic:item:ff_bc}). It follows
that $j^*C \cong 0$, so $C \in \mathcal{T}_{X\setminus U}(X)$, and
$\eta^f_C$ is an isomorphism. We thus obtain a morphism of distinguished
triangles:
\[
\xymatrix@C4.5pc{%
  C \ar[r]^l\ar@{=}[d] & N \ar[r]^-{\eta^j_N}\ar[d]^{\eta^f_N} & j_*j^*N \ar[r]^m\ar[d]^{u} & C[1]\ar@{=}[d] \\
  C \ar[r]^-{\eta^f_N\circ l} & f_*f^*N \ar[r]^-{f_*f^*\eta^j_N} & f_*f^*j_*j^*N \ar[r]^-{(\eta^f_{C[1]})^{-1}\circ f_*f^*m} & C[1] \\
}
\]
for some morphism $u$. We can certainly let $u=\eta^f_{j_*j^*N}$ and we will
soon see that this is actually the only possible $u$.
On the other hand, we can choose $u$ such that the middle square
is a homotopy cartesian square by the Octahedral
Axiom~\cite[Lem.~1.4.3]{MR1812507}. After applying $j_*j^*$ to the middle
square and adjoining to it the natural square relating $u$ and $j_*j^*u$ we
obtain the commutative diagram:
\[
\xymatrix@C4pc{%
  j_*j^*N \ar[r]^-{j_*j^*\eta^j_N}\ar[d]^{j_*j^*\eta^f_N}
   & j_*j^*j_*j^*N \ar[d]^{j_*j^*u}
   & j_*j^*N \ar[l]_-{\eta^j_{j_*j^*N}}\ar[d]^{u}\\
  j_*j^*f_*f^*N \ar[r]^-{j_*j^*f_*f^*\eta^j_N}
   & j_*j^*f_*f^*j_*j^*N
   & f_*f^*j_*j^*N \ar[l]_-{\eta^j_{f_*f^*j_*j^*N}}. \\
}
\]
Since $j$ is a $\mathcal{T}$-flat monomorphism, the horizontal maps
are all isomorphisms and it follows
that $u=\eta^f_{j_*j^*N}$. Moreover, it is readily verified that the induced differential
\[
d:=l[1]\circ (\eta^f_C)^{-1}[1] \circ f_*f^*m\colon f_*f^*j_*j^*N \to N[1]
\]
is independent of the choice of the triangle $C\xrightarrow{l} N\xrightarrow{\eta^j_N} j_*j^*N\xrightarrow{m} C[1]$.  The functoriality of the
Mayer--Vietoris triangle now follows from the construction. Finally,
to show that $d$ is unique, if $d' \colon f_*f^*j_*j^*N \to N[1]$ is
another morphism that makes a distinguished triangle, then there is an
induced morphism of distinguished triangles:
\[
\xymatrix{N \ar[r] \ar@{=}[d] & \ar@{=}[d]j_*j^*N \oplus f_*f^*N \ar[r] & f_*f^*j_*j^*N
  \ar[r]^d \ar@{-->}[d]^{\theta} & N[1] \ar@{=}[d]\\
N \ar[r] & j_*j^*N \oplus f_*f^*N \ar[r] & f_*f^*j_*j^*N
  \ar[r]^{d'} & N[1].}
\]
It remains to show that $\theta$ is the identity morphism. Splitting up the sum in the middle square, we obtain the commutative diagram:
\[
\xymatrixcolsep{4pc}%
\xymatrix{%
 f_*f^*N \ar@{=}[d] \ar[r]^-{f_*f^*\eta^j_N}
  & f_*f^*j_*j^*N \ar[d]^{\theta}
  \\
 f_*f^*N \ar[r]^-{f_*f^*\eta^j_N}
  & f_*f^*j_*j^*N.}
\]
Applying $j_*j^*$ to this diagram, we may append another square on the right to obtain the commutative diagram:
\[
\xymatrix@C4pc{%
 j_*j^*f_*f^*N \ar@{=}[d] \ar[r]^-{j_*j^*f_*f^*\eta^j_N}
  & j_*j^*f_*f^*j_*j^*N \ar[d]^{j_*j^*\theta}
  & f_*f^*j_*j^*N \ar[d]^{\theta} \ar[l]_-{\eta_{f_*f^*j_*j^*N}^j}\\
 j_*j^*f_*f^*N \ar[r]^-{j_*j^*f_*f^*\eta^j_N}
  & j_*j^*f_*f^*j_*j^*N
  & f_*f^*j_*j^*N \ar[l]_-{\eta_{f_*f^*j_*j^*N}^j},}
\]
where the horizontal arrows are isomorphisms. It follows that
$\theta$ is the identity.

To obtain the isomorphism in \itemref{item:mv_triangle:adj}, first note that
by the definition of a Mayer--Vietoris square, the counit
$f^*f_*M \to M$ is an isomorphism since
$M \in \mathcal{T}_{X'\setminus U'}(X')$. This gives us a natural
isomorphism
\[
\Hom_{\mathcal{T}(X')}(M,f^*N)
\cong \Hom_{\mathcal{T}(X')}(f^*f_*M,f^*N)
\cong \Hom_{\mathcal{T}(X)}(f_*M,f_*f^*N).
\]
Now apply
    the homological functor
    $\Hom_{\mathcal{T}(X)}(f_*M,-)$ to the triangle $N \to
    j_*j^*N \oplus f_*f^*N \to f_*f^*j_*j^*N$ from
    \itemref{item:mv_triangle:MV}. Since
    $j^*f_*M\homotopic 0$ we obtain an isomorphism
    $\Hom_{\mathcal{T}(X)}(f_*M,N) \cong \Hom_{\mathcal{T}(X)}(f_*M,f_*f^*N)$
    and the result follows.

For \itemref{item:mv_triangle:desc}, the natural maps $v_j\colon j^*N \to N_U$ and $v_f \colon f^*N
    \to N'$ are obtained by adjunction from the maps $v^\vee_j \colon N \to
    j_*N_U$ and $v^\vee_f \colon N \to f_*N'$ in the defining triangle of
    $N$. The defining triangle exhibits $N$ as a homotopy pullback. We may
    thus find a morphism of distinguished triangles
    \cite[Lem.~1.4.4]{MR1812507} (Octahedral axiom):
    \[
    \xymatrix@C3pc{%
      C \ar[r]\ar@{=}[d] & N \ar[r]^-{v^\vee_j}\ar[d]^{v^\vee_f}
        & j_*N_U \ar[r]\ar[d]^{\eta^f_{j_*N_U}} & C[1]\ar@{=}[d] \\
      C \ar[r] & f_*N' \ar[r]^-{\alpha} & f_*f^*j_*N_U \ar[r] & C[1]. \\
    }
    \]
    Since $j$ is a $\mathcal{T}$-flat monomorphism, we have that $j^*\alpha$ is
    an isomorphism, so $j^*C\cong 0$. It follows that $j^*v^\vee_j\colon
    j^*N \to j^*j_*N_U$ is an isomorphism and hence that $v_j\colon j^*N\to
    N_U$ is an isomorphism
    (Lemma~\ref{lem:adm_basic}\itemref{lem:adm_basic:item:ff_bc}).

    Now, by \itemref{item:mv_triangle:MV}, we have a morphism of distinguished
    triangles:
    \[
    \xymatrix{%
      N \ar[r] \ar@{=}[d]
        & \ar[d]^-{j_*v_j \oplus f_*v_f} j_*j^*N \oplus f_*f^*N \ar[r]
        & \ar@{-->}[d]^{\theta} f_*f^*j_*j^*N\ar[r]^-{d} & N[1]\ar@{=}[d] \\
      N \ar[r]
        & j_*N_U \oplus f_*N' \ar[r]
        & f_*f^*j_*N_U \ar[r] & N[1].}
    \]
    As before, it follows that $\theta=f_*f^*j_*v_j$ by considering the
    application of $j_*j^*$ to the middle square.
    Since $v_j$ is an isomorphism, it follows that $f_*v_f$ is an isomorphism.
    Now, if we let $W$ be a cone of $v_f$, then $f_*W \homotopic
    0$. We will be done if we can show that $j'^*v_f$ is an
    isomorphism. Indeed, it would then follow that $j'^*W \homotopic
    0$ and so $W \homotopic f^*f_*W \homotopic f^*0 \homotopic
    0$. To this end, since the following diagram commutes:
    \[
    \xymatrix{j'^*f^*N \ar[r]^-{j'^*v_f} \ar[d]_{\mathrm{can}} & j'^*N'
      \ar[d]^{\delta}  \\ f_U^*j^*N \ar[r]^{f_U^*v_j} & f_U^*N_U, }
    \]
    and all appearing morphisms except $j'^*v_f$ are known to be
    isomorphisms, it follows that $j'^*v_f$ is an isomorphism.

For \itemref{item:mv_triangle:comp}, let $h \in \{j,f,j\circ f_U\}$. Since $h^*$ admits a right adjoint, it
    commutes with small coproducts. Thus if
    $\{Q_\lambda\}$ is a set of objects of $\mathcal{T}(X)$, then
    \begin{align*}
      \oplus_\lambda \Hom(N,h_*h^*Q_\lambda) &\cong
      \oplus_\lambda\Hom(h^*N,h^*Q_\lambda) \\
      &\cong \Hom(h^*N,\oplus_\lambda h^*Q_\lambda) \\
      &\cong \Hom(h^*N, h^*(\oplus_\lambda Q_\lambda)) \\
      &\cong \Hom(N,h_*h^*(\oplus_\lambda Q_\lambda)). 
    \end{align*}
    The result now follows by consideration of the Mayer--Vietoris
    triangles associated to $Q_\lambda$ and $\oplus_\lambda
    Q_\lambda$, together with the long exact sequence given by the
    homological functor $\Hom(N,-)$.
\end{proof}

In the following definition, we axiomatize the required properties of
open immersions of algebraic stacks.
\begin{definition}\label{defn:supports}
  Let $\mathcal{T}$ be a $\mathscript{D}$-presheaf of triangulated
  categories with adjoints. Let $\mathscript{L}$
  be a collection of morphisms in $\mathscript{D}$. We say that
  $\mathscript{L}$ \emph{supports $\mathcal{T}$} if it satisfies the
  following five conditions:
  \begin{enumerate}
  \item \label{defn:supports:mono} if $j \colon U \to X$ belongs to $\mathscript{L}$, then $j$ is
    a $\mathcal{T}$-flat monomorphism;
  \item \label{defn:supports:iso} if $j\colon U \to V$ is an isomorphism, then $j$ belongs to
    $\mathscript{L}$;
  \item \label{defn:supports:comp} if $j \colon U \to V$ and $k\colon V \to X$ belong to
    $\mathscript{L}$, then $k\circ j$ belongs to $\mathscript{L}$;
  \item \label{defn:supports:bc} if $j\colon U \to X$ belongs to $\mathscript{L}$ and $f\colon
    X' \to X$ is a morphism in $\mathscript{D}$, then the induced
    morphism $j'\colon U \times_X X' \to X'$ belongs to
    $\mathscript{L}$; and
  \item \label{defn:supports:union} if $i \colon U
    \to X$ and $j \colon V \to X$ belong to
    $\mathscript{L}$, then there exists a commutative diagram:
    \[
    \xymatrix{U\cap V \ar[d]^{\bar{\jmath}_U} \ar[r]_{\bar{\imath}_V} & V
      \ar[d]_{\bar{\jmath}} \ar@/^/[ddr]^j & \\ U \ar[r]^{\bar{\imath}}
      \ar@/_/[drr]_i & U \cup V \ar[dr]|k & \\ & & X,}
    \]
    where $k$ belongs to $\mathscript{L}$, and the square is
    a Mayer--Vietoris $\mathcal{T}$-square.
  \end{enumerate}
\end{definition}
Note that $U\cap V=U\times_X V$. 
\begin{example}\label{ex:dmodqc_supports}
  We continue Example \ref{ex:dmodqc_mv}. Let $\mathscript{I}$ be
  the collection of morphisms in $\REP[\mathrm{fp}]{Y}$ that are open
  immersions. By Example \ref{ex:dmodqc_mv} and standard arguments,
  $\mathscript{I}$ supports $\DQCOH$.
\end{example}
We now have a straightforward lemma.
\begin{lemma}\label{lem:glue_mono}
  Let $\mathcal{T}$ be a
  $\mathscript{D}$-presheaf of 
  triangulated categories with adjoints. Let
  $\mathscript{L}$ be a collection of
  morphisms in $\mathscript{D}$ that supports $\mathcal{T}$. If $i \colon U \to X$ and $j \colon V \to
  X$ belong to $\mathscript{L}$, then
  \begin{enumerate}
    \item\label{lem:glue_mono:item:int} $\mathcal{T}_{X\setminus U \cup V}(X) = \mathcal{T}_{X\setminus U}(X)\cap\mathcal{T}_{X\setminus V}(X)$, and
    \item\label{lem:glue_mono:item:exact} there is an exact sequence of triangulated categories: 
  \[
  \mathcal{T}_{X\setminus U \cup V}(X) \to \mathcal{T}_{X\setminus
    V}(X) \xrightarrow{i^*} \mathcal{T}_{U\setminus U\cap V}(U).
  \]
  \end{enumerate}
\end{lemma}
\begin{proof}
  \itemref{lem:glue_mono:item:int}
  Certainly, we have $\mathcal{T}_{X\setminus U\cup V}(X)
  \subset (\ker i^*) \cap (\ker j^*)$. For the other inclusion let $M
  \in (\ker i^*) \cap (\ker j^*)$. If $k \colon U \cup V \to X$ denotes the morphism induced by Definition \ref{defn:supports}\itemref{defn:supports:union}, then $k^*M \in \mathcal{T}(U\cup
  V)$. Now let $\bar{k} \colon U \cap V \to U \cup V$ denote the induced
  morphism. There is a triangle in $\mathcal{T}(U\cup V)$:
  \[
  k^*M \to \bar{\imath}_*\bar{\imath}^*k^*M \oplus
  \bar{\jmath}_*\bar{\jmath}^*k^*M \to \bar{k}_*\bar{k}^*k^*M.
  \]
  Functoriality induces isomorphisms $\bar{\imath}^*k^* \homotopic
  i^*$, $\bar{\jmath}^*k^* \homotopic j^*$, and $\bar{k}^*k^*
  \homotopic \bar{\jmath}_U^*i^*$. By hypothesis $i^*M$ and $j^*M$
  vanish, so the triangle gives $k^*M \homotopic 0$. Hence $M \in
  \mathcal{T}_{X\setminus U\cup V}(X)$.

  \itemref{lem:glue_mono:item:exact} Lemma
  \ref{lem:adm_basic}\itemref{lem:adm_basic:item:pb_supp} shows that the
  functor $i^* \colon \mathcal{T}_{X\setminus V}(X) \to \mathcal{T}(U)$ factors
  through $\mathcal{T}_{U\setminus U\cap V}(U)$ and the kernel is $(\ker i^*)
  \cap (\ker j^*)=\mathcal{T}_{X\setminus U\cup V}(X)$ by
  \itemref{lem:glue_mono:item:int}. Also, Lemma
  \ref{lem:adm_basic}\itemref{lem:adm_basic:item:im_supp} shows that
  $i_* \colon \mathcal{T}_{U \setminus U \cap V}(U) \to
  \mathcal{T}_{X\setminus V}(X)$ is a right adjoint to $i^*$ and Lemma
  \ref{lem:adm_basic}\itemref{lem:adm_basic:item:ff_bc} shows that the
  natural transformation $i^*i_* \to \ID{}$ is an isomorphism. It follows
  that the sequence is exact by Lemma \ref{lem:identify_quots}.
\end{proof}
\section{Descent of compact generation}\label{sec:descent-compact-gen}
For this section we fix a small category $\mathscript{D}$ that admits all 
finite limits. We also fix a
collection $\mathscript{L}$ of morphisms in $\mathscript{D}$.
\begin{definition}
  An \emph{admissible
    $(\mathscript{L},\mathscript{D})$-presheaf} of triangulated
  categories is a $\mathscript{D}$-presheaf $\mathcal{T}$
  of triangulated categories
  with adjoints (\S\ref{sec:presheaves-tri-cats}) satisfying
  \begin{enumerate}
  \item for all $X\in \mathscript{D}$, the triangulated category
    $\mathcal{T}(X)$ is closed under small coproducts;
  \item for all $(f\colon X\to Y)\in \mathscript{D}$, the push-forward
    $f_*\colon \mathcal{T}(X)\to \mathcal{T}(Y)$ preserves small
    coproducts; and
  \item $\mathscript{L}$ supports $\mathcal{T}$ (Definition~\ref{defn:supports}).
  \end{enumerate}
\end{definition}

\begin{example}\label{ex:dmodqc_adm}
  We continue Example \ref{ex:dmodqc_supports}: $\DQCOH$ is an
  admissible $(\mathscript{I},\REP[\mathrm{fp}]{Y})$-presheaf of
  triangulated categories. Indeed,
  the only non-trivial condition is that $f_*$ preserves small
  coproducts, which follows from Theorem~\ref{thm:fcd_coprod}\itemref{thm:fcd_coprod:item:sm_cp} since
  representable morphisms are concentrated.
\end{example}
\begin{definition}
Let $\beta$ be a cardinal and let $X \in \mathscript{D}$. We say that
an admissible $(\mathscript{L},\mathscript{D})$-presheaf of triangulated categories
$\mathcal{T}$ is \fndefn{compactly generated with $\mathscript{L}$-supports by $\beta$ 
  objects at $X$} if for every $j \colon V \to X$ in $\mathscript{L}$ the
triangulated category $\mathcal{T}_{X\setminus V}(X)$ is generated by
a set of cardinality $\leq \beta$ whose elements have compact image in
$\mathcal{T}(X)$. 
\end{definition}

In practice, $\mathscript{D}$ will often contain an initial object $\emptyset$ and for every
$X \in \mathscript{D}$ it will be the case that $(\emptyset \to X) \in \mathscript{L}$ and 
$\mathcal{T}(\emptyset) \simeq 0$. Hence, in this situation, 
$\mathcal{T}(X)=\mathcal{T}_{X\setminus \emptyset}(X)$ is also compactly
generated by a set of cardinality
$\leq \beta$. Also observe that if $\beta$ is a finite cardinal, then
$\mathcal{T}$ can always be compactly generated with supports by
\emph{one} object at $X$. In this section we will give conditions on
$\mathcal{T}$ that guarantee that the condition of compact generation
with $\mathscript{L}$-supports by $\beta$ objects descends along certain morphisms and
diagrams in $\mathscript{D}$.

Our first result is of an
elementary nature and is similar to the arguments of To\"en \cite[Lem.~4.11]{MR2957304}. First we require a
definition:
\begin{definition}
Let $\mathcal{T}$ be an admissible
$(\mathscript{L},\mathscript{D})$-presheaf of triangulated
categories. A morphism $f \colon X' \to X$ in
$\mathscript{D}$ is \fndefn{$\mathcal{T}$-quasiperfect with respect to
  $\mathscript{L}$} if the following three
conditions are satisfied:
\begin{enumerate}
\item $f$ is $\mathcal{T}$-flat (Definition \ref{defn:tflat});
\item\label{defn:quasiperfect:item:2} if $P \in \mathcal{T}(X')^c$, then $f_*P \in \mathcal{T}(X)^c$;
\item\label{defn:quasiperfect:item:3} $f_*$ admits a right adjoint $f^\times$ such that for every
 $j \colon V \to X$ in $\mathscript{L}$, the restriction of $f^\times$ to
  $\mathcal{T}_{X\setminus V}(X)$ factors through 
  $\mathcal{T}_{X'\setminus V\times_X X'}(X')$. 
\end{enumerate}
\end{definition}
By Example \ref{ex:pres_cpt_adj}, a potentially easy way to verify
condition~\itemref{defn:quasiperfect:item:2} above is for $f^\times$ to preserve small coproducts. To
verify condition~\itemref{defn:quasiperfect:item:3} above, it is sufficient to prove the following: for
every $j\colon V \to X$ in $\mathscript{L}$, if $j'\colon V'
\to X'$ is the pullback of $j$ along $f$ and $f_V \colon V'\to V$ is
the projection to $V$, then $f_*$ and $(f_V)_*$ both admit right adjoints and the
natural transformation $j'^*f^\times \to f_V^\times j^*$ is an
isomorphism of functors.
\begin{example}\label{ex:finiteflat_qp}
  We continue with Example \ref{ex:dmodqc_adm}. If $q\colon W' \to W$
  is a finite and faithfully flat morphism of finite presentation,
  then $q$ is $\DQCOH$-quasiperfect with respect to $\mathscript{I}$
  (Corollary \ref{C:fin_duality}). In
  \cite[App.~A]{hallj_dary_alg_groups_classifying}, we prove
  that if $q \colon W' \to W$ is a proper, smooth and locally schematic
  morphism of noetherian algebraic stacks, then $q$ is
  $\DQCOH$-quasiperfect with respect to $\mathscript{I}$. 
\end{example}
We now have the first important result of this section.
\begin{proposition}\label{prop:adm_finite_dev}
  Let $\beta$ be a cardinal. Let $\mathcal{T}$ be an admissible
  $(\mathscript{L},\mathscript{D})$-presheaf of triangulated categories. Let $f \colon X' \to
  X$ be a morphism in $\mathscript{D}$ that is
  $\mathcal{T}$-quasiperfect with respect to $\mathscript{L}$. If the functor $f^\times$ (which exists
  because $f$ is $\mathcal{T}$-quasiperfect) is conservative and
  $\mathcal{T}$ is compactly generated with $\mathscript{L}$-supports by $\beta$
  objects at $X'$, then $\mathcal{T}$ is compactly generated with
  $\mathscript{L}$-supports by $\beta$ objects at $X$. In fact, if  $j \colon V \to X$ belongs to $\mathscript{L}$, let $V'=X'\times_X V$ and let $\mathcal{B}' \subset \mathcal{T}(X')^c \cap
  \mathcal{T}_{X'\setminus V'}(X')$ be a subset of cardinality $\leq \beta$
  generating $\mathcal{T}_{X'\setminus V'}(X')$, then $f_*\mathcal{B}' \subseteq \mathcal{T}(X)^c \cap \mathcal{T}_{X\setminus V}(X)$  and $f_*\mathcal{B}'$ generates $\mathcal{T}_{X\setminus V}(X)$. 
\end{proposition}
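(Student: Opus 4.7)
The plan is to push the given generating set $\mathcal{B}'$ forward along $f$ and check directly that $f_*\mathcal{B}'$ has each of the three required properties: its elements have compact image in $\mathcal{T}(X)$, they lie in $\mathcal{T}_{X\setminus V}(X)$, and they generate that subcategory.

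First, since $j\colon V\to X$ lies in $\mathscript{L}$, condition (4) of Definition~\ref{defn:supports} gives that the pullback $j'\colon V'\to X'$ also lies in $\mathscript{L}$, so the hypothesis on $X'$ supplies a subset $\mathcal{B}' \subseteq \mathcal{T}(X')^c \cap \mathcal{T}_{X'\setminus V'}(X')$ of cardinality $\leq \beta$ generating $\mathcal{T}_{X'\setminus V'}(X')$. The compactness of $f_*\mathcal{B}'$ in $\mathcal{T}(X)$ is then immediate from condition (2) in the definition of $\mathcal{T}$-quasiperfectness. For the support condition, I would apply Lemma~\ref{lem:adm_basic}\itemref{lem:adm_basic:item:im_supp} to the cartesian square obtained from $f$ and $j$: since $j$ is $\mathcal{T}$-flat, $f_*$ sends $\mathcal{T}_{X'\setminus V'}(X')$ into $\mathcal{T}_{X\setminus V}(X)$, so $f_*\mathcal{B}' \subseteq \mathcal{T}_{X\setminus V}(X)$.

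The crux is the generating property. I would take $N \in \mathcal{T}_{X\setminus V}(X)$ and assume $\Hom_{\mathcal{T}(X)}(f_*B'[n], N) = 0$ for every $B'\in \mathcal{B}'$ and every $n\in \Z$. Applying the $(f_*, f^\times)$-adjunction (which exists by clause (3) of $\mathcal{T}$-quasiperfectness) rewrites this as $\Hom_{\mathcal{T}(X')}(B'[n], f^\times N) = 0$ for all such $B',n$. The same clause (3) of $\mathcal{T}$-quasiperfectness, together with $N\in \mathcal{T}_{X\setminus V}(X)$, ensures that $f^\times N$ lies in $\mathcal{T}_{X'\setminus V'}(X')$. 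Since $\mathcal{B}'$ generates that category, $f^\times N \simeq 0$, and then the assumed conservativity of $f^\times$ yields $N \simeq 0$.

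There is no single delicate step: each verification is a direct application of one of the axioms packaged into the definitions of $\mathcal{T}$-quasiperfect and of a support class $\mathscript{L}$. The only point requiring attention is the order of use, namely that the support-preservation property of $f^\times$ is what places $f^\times N$ in the subcategory generated by $\mathcal{B}'$, and that conservativity of $f^\times$ is what transfers vanishing from $X'$ back to $X$.
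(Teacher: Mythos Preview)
Your proof is correct and follows essentially the same approach as the paper's: push forward the generating set, check compactness via the quasiperfect hypothesis, check support via Lemma~\ref{lem:adm_basic}\itemref{lem:adm_basic:item:im_supp}, and verify generation by using the $(f_*,f^\times)$-adjunction together with conservativity of $f^\times$. You are in fact slightly more explicit than the paper in spelling out the adjunction step and in noting that $j'\in\mathscript{L}$ so that the hypothesis at $X'$ applies.
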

\begin{proof}
  It suffices to prove the latter assertion. Set $\mathcal{B} = f_*\mathcal{B}'= \{
  f_*P \suchthat P \in \mathcal{B}'\}$. Then $\mathcal{B}$ has
  cardinality $\leq \beta$, and $\mathcal{B}
  \subset \mathcal{T}_{X\setminus V}(X)$ by Lemma
  \ref{lem:adm_basic}\itemref{lem:adm_basic:item:im_supp}. Moreover,
  $\mathcal{B} \subset \mathcal{T}(X)^c$, since $f$ is
  $\mathcal{T}$-quasiperfect with respect to $\mathscript{L}$.
  It remains to show that $\mathcal{B}$ generates
  $\mathcal{T}_{X\setminus V}(X)$. Let $N \in \mathcal{T}_{X\setminus
    V}(X)$ satisfy $\Hom_{\mathcal{T}(X)}(f_*P[n],N) = 0$ for all $P
  \in \mathcal{B}'$ and all $n\in \Z$. Since $f$ is
  $\mathcal{T}$-quasiperfect with respect to $\mathscript{L}$, it follows that $f^\times N \in
  \mathcal{T}_{X'\setminus V'}(X')$.  As $\mathcal{B}'$ is generating
  for $\mathcal{T}_{X'\setminus V'}(X')$, we may conclude that
  $f^\times N \homotopic 0$. By assumption, $f^\times$ is
  conservative. Thus $N \homotopic 0$ and $\mathcal{B}$ generates
  $\mathcal{T}_{X\setminus V}(X)$.
\end{proof}
Our next descent result is deeper, relying on Thomason's
Localization Theorem \ref{thm:thomason}. First, however, we require a lemma. 
\begin{lemma}\label{lem:cpct_basic}
  Let $\beta$ be a cardinal. Let $\mathcal{T}$ be an admissible
  $(\mathscript{L},\mathscript{D})$-presheaf of triangulated categories.  Suppose that
  $\mathcal{T}$ is compactly generated with $\mathscript{L}$-supports by $\beta$
  objects at $X \in \mathscript{D}$. Let $W \to V$ and $V \to X$
  belong to $\mathscript{L}$. Then the following holds.
  \begin{enumerate}
  \item\label{lem:cpct_basic:item:sub} $\mathcal{T}_{X\setminus W}(X)$
    is closed under small coproducts and the subcategory
    $\mathcal{T}_{X\setminus V}(X) \subset \mathcal{T}_{X\setminus
      W}(X)$ is localizing;
  \item\label{lem:cpct_basic:item:loc} $\mathcal{T}_{X\setminus V}(X)$
    is the localizing envelope of a set of compact objects of
    $\mathcal{T}_{X\setminus W}(X)$; and
  \item\label{lem:cpct_basic:item:comp} $\mathcal{T}_{X\setminus
      V}(X)^c = \mathcal{T}_{X\setminus W}(X)^c \cap
    \mathcal{T}_{X\setminus V}(X)$.
  \end{enumerate}
\end{lemma}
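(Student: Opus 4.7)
The three claims stack neatly: (1) is a formal consequence of $\mathcal{T}$ having adjoints, (2) repackages the compact-generation hypothesis via Corollary~\ref{cor:thomason_gens}, and (3) is a direct application of Thomason's localization theorem~\ref{thm:thomason}. The recurring technical point is that compactness and small coproducts are compatible across the tower $\mathcal{T}_{X\setminus V}(X)\subseteq \mathcal{T}_{X\setminus W}(X)\subseteq \mathcal{T}(X)$; once this is observed the proof becomes essentially formal.

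For (1), write $k\colon W\to X$ and $\ell\colon V\to X$ for the composed morphisms, both in $\mathscript{L}$ by closure of $\mathscript{L}$ under composition. Then $\mathcal{T}_{X\setminus W}(X)=\ker k^*$ and $\mathcal{T}_{X\setminus V}(X)=\ker \ell^*$. Since $k^*$ and $\ell^*$ admit the right adjoints $k_*$ and $\ell_*$, they preserve small coproducts, so both kernels are closed under coproducts computed in $\mathcal{T}(X)$, and the inclusion $\mathcal{T}_{X\setminus V}(X)\hookrightarrow \mathcal{T}_{X\setminus W}(X)$ preserves coproducts automatically.

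For (2), let $\mathcal{B}\subseteq \mathcal{T}(X)^c\cap \mathcal{T}_{X\setminus V}(X)$ be a generating set of cardinality $\leq \beta$ supplied by the compact-generation hypothesis. For any full subcategory $\mathscript{S}\subseteq \mathcal{T}(X)$ closed under small coproducts, any object of $\mathscript{S}$ that is compact in $\mathcal{T}(X)$ remains compact in $\mathscript{S}$, since Hom-sets and coproducts are inherited. Applied to $\mathcal{T}_{X\setminus V}(X)$, this shows $\mathcal{B}$ is a generating set of compact objects in $\mathcal{T}_{X\setminus V}(X)$, and Corollary~\ref{cor:thomason_gens} forces the localizing envelope of $\mathcal{B}$ computed inside $\mathcal{T}_{X\setminus V}(X)$ to equal $\mathcal{T}_{X\setminus V}(X)$. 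By (1) this envelope coincides with the localizing envelope of $\mathcal{B}$ in $\mathcal{T}_{X\setminus W}(X)$, which is (2).

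For (3), applying the compact-generation hypothesis to $k$ itself shows that $\mathcal{T}_{X\setminus W}(X)$ is compactly generated. Combined with (2), the exact sequence
\[
\mathcal{T}_{X\setminus V}(X)\hookrightarrow \mathcal{T}_{X\setminus W}(X)\to \mathcal{T}_{X\setminus W}(X)/\mathcal{T}_{X\setminus V}(X)
\]
satisfies the hypotheses of Thomason's localization theorem~\ref{thm:thomason} (the thickness of $\mathcal{T}_{X\setminus V}(X)$ in $\mathcal{T}_{X\setminus W}(X)$ is automatic from being localizing), which immediately yields $\mathcal{T}_{X\setminus V}(X)^c=\mathcal{T}_{X\setminus W}(X)^c\cap \mathcal{T}_{X\setminus V}(X)$. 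The only mild obstacle in the whole proof is the bookkeeping of compactness across three different triangulated categories; no further input is needed.
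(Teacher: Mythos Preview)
Your proof is correct and follows essentially the same route as the paper's: (1) via right adjoints forcing $k^*$, $\ell^*$ to preserve coproducts; (2) by transferring compactness from $\mathcal{T}(X)$ down to the subcategories and invoking Corollary~\ref{cor:thomason_gens}; and (3) by Thomason's localization theorem~\ref{thm:thomason}. You are slightly more explicit than the paper in noting that $\mathcal{T}_{X\setminus W}(X)$ is itself compactly generated (needed for Theorem~\ref{thm:thomason}), but this is the same argument.
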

\begin{proof}
  First, observe that $\mathcal{T}(X)$ is closed under small
  coproducts. Also, if $f \colon X' \to X$ is a morphism in $\mathscript{D}$,
  then $f^*$ admits a right adjoint, so $f^*$ preserves small
  coproducts. Hence, we see that $\mathcal{T}_{X\setminus X'}(X)$ is a
  localizing subcategory of $\mathcal{T}(X)$. In particular,
  $\mathcal{T}_{X\setminus X'}(X)$ is closed under small
  coproducts. The claim \itemref{lem:cpct_basic:item:sub} is now
  immediate. 

  By hypothesis, $\mathcal{T}_{X\setminus V}(X)$ is generated by a
  subset $R$ that has compact image in $\mathcal{T}(X)$, hence also in
  $\mathcal{T}_{X\setminus W}(X)$. Let $\mathscript{R} \subset
  \mathcal{T}_{X\setminus W}(X)$ denote the localizing envelope of
  $R$, then $\mathscript{R} \subset \mathcal{T}_{X\setminus
    V}(X)$. Applying Corollary \ref{cor:thomason_gens} to $R \subset
  \mathcal{T}_{X\setminus V}(X)$, we find $\mathscript{R} =
  \mathcal{T}_{X\setminus V}(X)$, proving \itemref{lem:cpct_basic:item:loc}.
  The claim \itemref{lem:cpct_basic:item:comp} is now an immediate
  consequence of \itemref{lem:cpct_basic:item:loc} and Thomason's Theorem
  \ref{thm:thomason}. 
\end{proof}
\begin{proposition}\label{prop:nis_square}
  Let $\beta$ be a cardinal. Let $\mathcal{T}$ be an admissible
  $(\mathscript{L},\mathscript{D})$-presheaf of triangulated categories. Consider a Mayer--Vietoris
  $\mathcal{T}$-square (Definition \ref{defn:mv}):
  \[
  \xymatrix{U' \ar[d]_{f_U} \ar[r]^{j'} & X' \ar[d]^f\\ U
    \ar[r]^j & X}
  \] 
  with $j\in \mathscript{L}$.
  If $\mathcal{T}$ is compactly generated with $\mathscript{L}$-supports by $\beta$
  objects at $U$ and $X'$, then $\mathcal{T}$ is compactly generated
  with $\mathscript{L}$-supports by $\beta$ objects at $X$.
\end{proposition}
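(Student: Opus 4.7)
The plan is to fix $k\colon V\to X$ in $\mathscript{L}$ and construct generators of $\mathcal{T}_{X\setminus V}(X)$ reflecting the semi-orthogonal decomposition coming from Lemma~\ref{lem:glue_mono}. Write $V_U=U\times_X V$, $V'=X'\times_X V$, $V'_U=U'\times_X V$; by axioms (4) and (5) of Definition~\ref{defn:supports} the morphisms $V'_U\to U'$, $V'\to X'$, and $U'\cup V'\to X'$ all belong to $\mathscript{L}$. The Mayer--Vietoris equivalence $f^*\colon \mathcal{T}_{X\setminus U}(X)\simeq \mathcal{T}_{X'\setminus U'}(X')$ restricts, via support tracking in Lemma~\ref{lem:adm_basic} and the base-change $k^*f_*\simeq (f_V)_*(k')^*$, to an equivalence $\mathcal{T}_{X\setminus U\cup V}(X)\simeq \mathcal{T}_{X'\setminus U'\cup V'}(X')$. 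Choosing a compact generating set $\mathcal{A}'\subseteq \mathcal{T}(X')^c$ of $\mathcal{T}_{X'\setminus U'\cup V'}(X')$ of cardinality $\leq\beta$ (hypothesis at $X'$), the push-forwards $f_*\mathcal{A}'$ generate $\mathcal{T}_{X\setminus U\cup V}(X)$ and are compact in $\mathcal{T}(X)$ by Lemma~\ref{lem:mv_triangle}\itemref{item:mv_triangle:comp}, the three relevant pullbacks being $0$, $P'$, and $0$.

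The second family lifts generators of the quotient $\mathcal{T}_{U\setminus V_U}(U)$. Pick a compact generating set $\mathcal{A}_U\subseteq \mathcal{T}(U)^c$ of $\mathcal{T}_{U\setminus V_U}(U)$ of cardinality $\leq\beta$. For $P\in\mathcal{A}_U$ the pullback $f_U^*P$ lies in $\mathcal{T}(U')^c\cap \mathcal{T}_{U'\setminus V'_U}(U')$. Applying Lemma~\ref{lem:glue_mono} at $X'$ produces the exact sequence $\mathcal{T}_{X'\setminus U'\cup V'}(X')\to \mathcal{T}_{X'\setminus V'}(X')\xrightarrow{j'^*} \mathcal{T}_{U'\setminus V'_U}(U')$, the quotient of which is an equivalence. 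The $X'$-hypothesis applied to $V'$ makes $\mathcal{T}_{X'\setminus V'}(X')$ compactly generated, and Corollary~\ref{cor:thomason_gens} identifies $\mathcal{T}_{X'\setminus U'\cup V'}(X')$ with the localizing envelope of $\mathcal{A}'\subseteq \mathcal{T}_{X'\setminus V'}(X')^c$. Theorem~\ref{thm:thomason} and Corollary~\ref{cor:thomason_lift} then yield $\tilde{N}'$ compact in $\mathcal{T}_{X'\setminus V'}(X')$ with $j'^*\tilde{N}'\simeq f_U^*(P\oplus P[1])$, and since the inclusion $\mathcal{T}_{X'\setminus V'}(X')\hookrightarrow \mathcal{T}(X')$ has the coproduct-preserving right adjoint $M\mapsto \mathrm{fib}(M\to k'_*k'^*M)$, Example~\ref{ex:pres_cpt_adj} promotes $\tilde{N}'$ to an object of $\mathcal{T}(X')^c$. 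Lemma~\ref{lem:mv_triangle}\itemref{item:mv_triangle:desc} then glues $N_U=P\oplus P[1]$ and $\tilde{N}'$ to produce $N_P\in \mathcal{T}(X)$ with $j^*N_P\simeq P\oplus P[1]$ and $f^*N_P\simeq \tilde{N}'$; compactness of $N_P$ in $\mathcal{T}(X)$ follows from Lemma~\ref{lem:mv_triangle}\itemref{item:mv_triangle:comp}, and $N_P\in \mathcal{T}_{X\setminus V}(X)$ because every term in the defining triangle is in that subcategory by Lemma~\ref{lem:adm_basic}\itemref{lem:adm_basic:item:im_supp}.

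Set $\mathcal{B}=f_*\mathcal{A}'\cup\{N_P:P\in\mathcal{A}_U\}\subseteq \mathcal{T}(X)^c\cap \mathcal{T}_{X\setminus V}(X)$, of cardinality $\leq\beta$ after the usual coproduct reduction in the finite case. To show $\mathcal{B}$ generates, let $M\in \mathcal{T}_{X\setminus V}(X)$ satisfy $\Hom_{\mathcal{T}(X)}(B[n],M)=0$ for all $B\in\mathcal{B}$, $n\in\Z$, and form the triangle $M'\to M\to j_*j^*M$ induced by the unit of $j^*\dashv j_*$. By Lemma~\ref{lem:adm_basic}\itemref{lem:adm_basic:item:ff_bc} and $k^*M\simeq 0$, both $j^*M'$ and $k^*M'$ vanish, so $M'\in \mathcal{T}_{X\setminus U\cup V}(X)$. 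The identity $j^*f_*\simeq (f_U)_*j'^*$ yields $\Hom(f_*P',j_*j^*M)=\Hom(j^*f_*P',j^*M)=0$, whence $\Hom(f_*P',M')=\Hom(f_*P',M)=0$. By Lemma~\ref{lem:mv_triangle}\itemref{item:mv_triangle:adj} this rewrites as $\Hom(P',f^*M')=0$ for all $P'\in\mathcal{A}'$, and since $f^*M'\in \mathcal{T}_{X'\setminus U'\cup V'}(X')$ is generated by $\mathcal{A}'$, we get $f^*M'\simeq 0$; the Mayer--Vietoris equivalence then forces $M'\simeq 0$, so $M\simeq j_*j^*M$. Finally, $\Hom(N_P,M)\simeq \Hom(j^*N_P,j^*M)=\Hom(P\oplus P[1],j^*M)=0$ together with generation of $\mathcal{T}_{U\setminus V_U}(U)$ by $\mathcal{A}_U$ gives $j^*M\simeq 0$ and hence $M\simeq 0$.

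The main technical hurdle is orchestrating Thomason's Localization with Mayer--Vietoris gluing in the construction of the $N_P$: one must verify Thomason's hypotheses for the $X'$-level exact sequence using both the $V'$ and $U'\cup V'$ strata of the $X'$-hypothesis, and then upgrade $\tilde{N}'$ from compact in $\mathcal{T}_{X'\setminus V'}(X')$ to compact in the ambient $\mathcal{T}(X')$, so that Lemma~\ref{lem:mv_triangle}\itemref{item:mv_triangle:comp} produces an object of $\mathcal{T}(X)^c$ with the prescribed supports.
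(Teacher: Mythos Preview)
Your proof is correct and follows essentially the same route as the paper's: both build the generating set from push-forwards $f_*\mathcal{A}'$ of generators of $\mathcal{T}_{X'\setminus U'\cup V'}(X')$ together with glued objects obtained by lifting $f_U^*(P\oplus P[1])$ through Thomason's localization on $X'$ and then applying Lemma~\ref{lem:mv_triangle}\itemref{item:mv_triangle:desc}; the generation step (your triangle $M'\to M\to j_*j^*M$ with $f^*M'$ killed on $X'$) is exactly the paper's argument with the cone $K$ of $f^*M\to j'_*j'^*f^*M$, up to a shift. One small difference worth noting: to pass from $\tilde{N}'\in \mathcal{T}_{X'\setminus V'}(X')^c$ to $\tilde{N}'\in \mathcal{T}(X')^c$, the paper invokes Lemma~\ref{lem:cpct_basic}\itemref{lem:cpct_basic:item:comp} (which rests on Thomason's theorem again), whereas you use the coproduct-preserving right adjoint $M\mapsto\mathrm{fib}(M\to k'_*k'^*M)$ to the inclusion and Example~\ref{ex:pres_cpt_adj}; your argument is slightly more direct and does not implicitly require an initial object $\emptyset\in\mathscript{L}$.
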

\begin{proof}
  Let $V \to X$ belong to $\mathscript{L}$. Form the cartesian cube:
  \[
  \xymatrix@R-3ex@C-2ex{& U'\cap V' \ar[dd]|\hole \ar[rr] \ar[dl] && V'
    \ar[dl] \ar[dd]^{f_V} \\U' \ar[rr]^-(0.7){j'} \ar[dd]_{f_U} & & X'
    \ar[dd]^(0.35){f} & \\ & U\cap V \ar[rr]|-\hole \ar[dl] & & V
    \ar[dl]\\ U
    \ar[rr]^{j} & & X. & }
  \]
  By Lemma \ref{lem:glue_mono}, we have an exact sequence
  \begin{equation}\label{eq:localization-X'}
  \mathcal{T}_{X'\setminus U'\cup V'}(X') \to
  \mathcal{T}_{X'\setminus V'}(X') \to \mathcal{T}_{U'\setminus U'\cap
    V'}(U').
  \end{equation}
  The category $\mathcal{T}_{X'\setminus V'}(X')$ is
  compactly generated and by Lemma
  \ref{lem:cpct_basic}\itemref{lem:cpct_basic:item:sub} it is also
  closed under small coproducts. By Lemma
  \ref{lem:cpct_basic}\itemref{lem:cpct_basic:item:loc} the
  subcategory $\mathcal{T}_{X'\setminus U'\cup V'}(X') \subset
  \mathcal{T}_{X'\setminus V'}(X')$ is the localizing envelope of a
  set of compact objects of $\mathcal{T}_{X'\setminus V'}(X')$.
  
  Now let $P \in \mathcal{T}(U)^c \cap \mathcal{T}_{U\setminus U\cap
    V}(U)$. Then $f_U^*P \in \mathcal{T}(U')^c$ since
  $(f_U)_*$ preserves coproducts (Example \ref{ex:pres_cpt_adj}) and
  $f_U^*P \in 
  \mathcal{T}_{U'\setminus U'\cap V'}(U')$ (Lemma \ref{lem:adm_basic}\itemref{lem:adm_basic:item:pb_supp}).
  Thus $f_U^*P\in \mathcal{T}_{U'\setminus U'\cap V'}(U')^c$ by Lemma
  \ref{lem:cpct_basic}\itemref{lem:cpct_basic:item:comp}. We now
  apply Thomason's localization Theorem, in the form of Corollary
  \ref{cor:thomason_lift}, to the
  exact sequence \eqref{eq:localization-X'}. This gives us
  $P' \in \mathcal{T}_{X'\setminus V'}(X')^c=\mathcal{T}(X')^c \cap
  \mathcal{T}_{X'\setminus V'}(X')$ and an isomorphism
  $j'^*P' \homotopic f_U^*(P\oplus P[1])$.  As in Lemma
  \ref{lem:mv_triangle}\itemref{item:mv_triangle:desc}, form the following triangle in $\mathcal{T}(X)$:
  \[
  \tilde{P} \to j_*(P\oplus P[1]) \oplus f_*P' \to
  f_*f^*j_*(P\oplus P[1]).
  \]
  By Lemma
  \ref{lem:mv_triangle}(\ref{item:mv_triangle:desc},\ref{item:mv_triangle:comp})
  we have that $j^*\tilde{P}\homotopic P\oplus P[1]$ and $f^*\tilde{P}\homotopic P'$ and that
  $\tilde{P} \in \mathcal{T}(X)^c$. Since $j_*j^*\tilde{P}$, $f_*f^*\tilde{P}$
  and $f_*f^*j_*j^*\tilde{P}\in \mathcal{T}_{X\setminus V}(X)$, it follows that
  $\tilde{P} \in \mathcal{T}_{X\setminus V}(X)$.

  Now let $Q\in \mathcal{T}_{X'\setminus U'\cup V'}(X')$, and note that
  $f_*Q\in\mathcal{T}_{X\setminus U\cup V}(X)$ (Lemma~\ref{lem:glue_mono}\itemref{lem:glue_mono:item:int}). Moreover, $f^*f_*Q\to Q$ is an
  isomorphism, because $f^*:\mathcal{T}_{X\setminus U}(X)\to
  \mathcal{T}_{X'\setminus U'}(X')$ is an equivalence of categories.  We also
  have that $j^*f_*Q \homotopic 0$ and $j'^*f^*f_*Q \homotopic 0$. Thus, if in
  addition $Q\in \mathcal{T}(X')^c$, then $f_*Q\in \mathcal{T}(X)^c$ by Lemma
  \ref{lem:mv_triangle}\itemref{item:mv_triangle:comp}.

  By hypothesis, there is a subset $\mathcal{B}_{0} \subset
  \mathcal{T}(U)^c \cap \mathcal{T}_{U\setminus U\cap V}(U)$ (resp.~$\mathcal{B}_1 \subset
  \mathcal{T}(X')^c \cap\mathcal{T}_{X'\setminus U'\cup V'}(X')$) of
  cardinality $\leq \beta$ generating $\mathcal{T}_{U\setminus U\cap V}(U)$
  (resp.~$\mathcal{T}_{X'\setminus U'\cup V'}(X')$). Define:  
  \[
  \mathcal{B} = \{\tilde{P}
  \suchthat P \in \mathcal{B}_0\} \cup \{f_*Q \suchthat Q \in \mathcal{B}_1\}.
  \]
  If $\beta$ is infinite, then the cardinality of $\mathcal{B}$ is
  $\leq \beta$, and if $\beta$ is finite then the same is true of
  $\mathcal{B}$. By the above considerations,
  $\mathcal{B} \subset \mathcal{T}(X)^c \cap \mathcal{T}_{X\setminus
    V}(X)$ and it remains to show that $\mathcal{B}$ generates
  $\mathcal{T}_{X\setminus V}(X)$. 
 
  Let $M \in \mathcal{T}_{X\setminus V}(X)$ so that $f^*M \in
  \mathcal{T}_{X'\setminus V'}(X')$ and
  $j^*M \in \mathcal{T}_{U\setminus U\cap V}(U)$.
  Suppose that
  $\Hom_{\mathcal{T}(X)}(f_*Q[n],M) =0$ for all $Q\in \mathcal{B}_1$ and all
  $n\in \Z$. By Lemma \ref{lem:mv_triangle}\itemref{item:mv_triangle:adj}, we see that
  $\Hom_{\mathcal{T}(X')}(Q[n],f^*M) = 0$ for all $Q\in \mathcal{B}_1$ and all
  $n\in \Z$. Let $K$ be a cone of $f^*M\to j'_*j'^*f^*M$. Note that
  \[
  \Hom_{\mathcal{T}(X')}(Q[n],j'_*j'^*f^*M) =
  \Hom_{\mathcal{T}(X')}(j'^*Q[n],j'^*f^*M)=0
  \]
  so
  $\Hom_{\mathcal{T}(X')}(Q[n],K) = 0$. Since $K\in \mathcal{T}_{X'\setminus
    U'\cup V'}(X')$ and $\mathcal{B}_1$ is generating, we
  see that $K \homotopic 0$, so $f_*f^*M\to
  f_*j'_*j'^*f^*M\homotopic f_*f^*j_*j^*M$ is
  an isomorphism. From the Mayer--Vietoris triangle $M
  \to j_*j^*M \oplus f_*f^*M \to f_*f^*j_*j^*M$, we deduce that the natural
  map $M \to j_*j^*M$ is an isomorphism for all such $M$. 

  Now suppose that $M$ also satisfies
  $\Hom_{\mathcal{T}(X)}(\tilde{P}[n],M) = 0$ for all $P\in \mathcal{B}_0$ and
  $n\in \Z$. Since the natural map $M \to j_*j^*M$ is an isomorphism,
  it follows that $\Hom_{\mathcal{T}(U)}(j^*\tilde{P}[n],j^*M) = 0$
  for all $P\in \mathcal{B}_0$ and $n\in \Z$. By Lemma
  \ref{lem:mv_triangle}\itemref{item:mv_triangle:desc}, $j^*\tilde{P} \homotopic
  P\oplus P[1]$ and so $\Hom_{\mathcal{T}(U)}(P[n],j^*M) = 0$ for all
  $P\in \mathcal{B}_0$ and all $n\in\Z$. By assumption, $\mathcal{B}_0$
  generates $\mathcal{T}_{U\setminus U\cap V}(U)$ and thus $j^*M
  \homotopic 0$. Since $M\homotopic j_*j^*M\homotopic 0$, we deduce
  that $\mathcal{B}$ generates $\mathcal{T}_{X\setminus V}(X)$. 
\end{proof}
We are now in a position to prove the main technical result of the
article. 
\begin{theorem}\label{thm:general_qffdesc_cpt_gen}
Let $X$ be a quasi-compact and quasi-separated algebraic stack. Let
$\mathscript{D}$ be $\REP[\mathrm{fp}]{X}$ or one of the full subcategories
$\REP[\mathrm{fp,qff,sep}]{X}$ or $\REP[\mathrm{fp},\et,\mathrm{sep}]{X}$. Let
$\mathscript{I}$ denote the set of open immersions in
$\mathscript{D}$. Let $\mathcal{T}$
be a presheaf of triangulated categories on $\mathscript{D}$. Assume that
\begin{enumerate}
\item\label{thm:general_qffdesc_cpt_gen:item:1} $\mathcal{T}(W)$ is closed under small coproducts for all $W \in \mathscript{D}$,
\item\label{thm:general_qffdesc_cpt_gen:item:2} for every morphism $f\colon W_1\to W_2$ in $\mathscript{D}$, the pullback
  $f^*\colon \mathcal{T}(W_2)\to \mathcal{T}(W_1)$ admits a right adjoint $f_*$
  that preserves small coproducts,
\item\label{thm:general_qffdesc_cpt_gen:item:3} for every cartesian square in $\mathscript{D}$
\[
\xymatrix{U_W \ar[d]_{f_W} \ar[r]^{g_U} & U \ar[d]^f\\ W \ar[r]^g & V,}
\]
such that $g$ is flat, the natural transformation $g^*f_* \to (f_W)_*(g_U)^*$
is an isomorphism,
\item\label{thm:general_qffdesc_cpt_gen:item:4}
  for every open immersion $U\to W$ and \'etale neighborhood $f\colon W'\to
  W$ of $W\setminus U$, the pullback $f^*$ induces an equivalence
  $\mathcal{T}_{W\setminus U}(W)\to \mathcal{T}_{W'\setminus U'}(W')$,
\item\label{thm:general_qffdesc_cpt_gen:item:5}
  for every finite faithfully flat morphism $W'\to W$ of finite
  presentation, the functor $f_*\colon \mathcal{T}(W')\to \mathcal{T}(W)$
  admits a right adjoint $f^\times$ that preserves small coproducts,
  is conservative, and commutes with pullback along open immersions.
\end{enumerate}
Let $\mathcal{C}\subseteq \mathcal{D}$ be the collection of all objects $W$
such that for every separated \'etale morphism $q\colon W' \to W$ in
$\mathcal{D}$ and every open immersion $V'\to W'$ in $\mathscript{I}$, the
triangulated category $\mathcal{T}_{W'\setminus V'}(W')$ is generated by a set
of cardinality $\leq \beta$ whose elements have compact image in
$\mathcal{T}(W')$.

If $p \colon W\to X$ is a separated, quasi-finite and faithfully flat morphism
in $\mathscript{D}$ such that $W\in \mathcal{C}$, then $X\in \mathcal{C}$.
\end{theorem}
\begin{proof}
  Condition~\itemref{thm:general_qffdesc_cpt_gen:item:2} says that $\mathcal{T}$ has adjoints. 
  Condition~\itemref{thm:general_qffdesc_cpt_gen:item:3} says that flat morphisms are
  $\mathcal{T}$-flat. Conditions~\itemref{thm:general_qffdesc_cpt_gen:item:3}
  and~\itemref{thm:general_qffdesc_cpt_gen:item:4} imply that \'etale
  neighborhoods
  are Mayer--Vietoris squares. In particular, $\mathscript{I}$
  supports $\mathcal{T}$. Combining this with conditions~\itemref{thm:general_qffdesc_cpt_gen:item:1} and~\itemref{thm:general_qffdesc_cpt_gen:item:2}, we
  conclude that $\mathcal{T}$ is an admissible
  $(\mathscript{I},\mathscript{D})$-presheaf of triangulated
  categories. 

  By assumption, there exists an object $W\in \mathcal{C}$ with
  $W\to X$ separated, quasi-finite and faithfully flat.
  We will apply~\cite[Thm.~6.1]{MR2774654} to deduce that
  $X \in \mathcal{C}$. To do this, we need to verify the following 
  three conditions for a flat morphism $q\colon W' \to W$ in
  $\mathcal{D}$. 
\begin{itemize}
\item[(D1)] If $W \in \mathcal{C}$  and $q$ is \'etale and
  separated, then $W' \in \mathcal{C}$; 
\item[(D2)] if $W' \in \mathcal{C}$ and $q$ is finite and surjective,
  then $W \in \mathcal{C}$; and
\item[(D3)] if $q$ is an \'etale neighborhood of $W\setminus U$,
  where $U \to W$ is an open immersion in $\mathcal{D}$, 
  and $U$ and $W'$ belong to $\mathcal{C}$, then $W \in \mathcal{C}$.
\end{itemize}
Now (D1) tautologically follows from the definition of
$\mathcal{C}$. For (D2), Condition~\itemref{thm:general_qffdesc_cpt_gen:item:5} implies that $p$ is
$\mathcal{T}$-quasiperfect with respect to $\mathscript{I}$. By
Proposition~\ref{prop:adm_finite_dev}, we deduce that (D2) is
satisfied.  As noted previously, Conditions~\itemref{thm:general_qffdesc_cpt_gen:item:3} and~\itemref{thm:general_qffdesc_cpt_gen:item:4} imply that
\'etale neighborhoods are Mayer--Vietoris $\mathcal{T}$-squares. By
Proposition~\ref{prop:nis_square}, (D3) is
satisfied. The result follows.
\end{proof}

\section{Algebraic stacks with the \texorpdfstring{$\beta$-}{}resolution property}
Let $X$ be an algebraic stack. 
Recall that $X$ is said to have the \fndefn{resolution property} if every quasi-coherent $\Orb_X$-module $M$ is a quotient of a direct sum of locally free $\Orb_X$-modules of finite type. 
The resolution property is a subtle and difficult property, although it is always satisfied for quasi-projective schemes. It has been studied systematically by several authors, with notable contributions due to Thomason \cite{MR893468}, Totaro \cite{MR2108211}, and Gross \cite{gross_phdthesis,2013arXiv1306.5418G}.

The following simple refinements of the resolution property will be useful for us. Let $\VB{X}\subseteq \QCOH(X)$ denote the subcategory of locally free
$\Orb_X$-modules of finite type.
Let $\beta$ be a cardinal. 
We say that $X$ has the \fndefn{$\beta$-resolution property} if there exists a subset 
$\mathcal{B}\subseteq \VB{X}$ of cardinality $\leq \beta$, with the property that every 
quasi-coherent $\Orb_X$-module $M$ is a quotient of a direct sum of elements of 
$\mathcal{B}$. If, in addition, it can be arranged that $\mathcal{B}$ consists of vector 
bundles that are compact objects of $\DQCOH(X)$, then we say that $X$ has the 
\fndefn{compact} $\beta$-resolution property. These conditions are equivalent to: 
every quasi-coherent
$\Orb_X$-module $M$ is the quotient of a (possibly infinite) direct sum of
objects in $\mathcal{B}$. 

If $X$ is concentrated, then every locally free $\Orb_X$-module of finite type is a compact 
object of $\DQCOH(X)$ (Lemma 
\ref{L:compact_conc}\itemref{L:compact_conc:perf_conc_c}). In particular, if $X$ also 
has the $\beta$-resolution property, then $X$ has the compact $\beta$-resolution 
property. Since quasi-compact, quasi-separated schemes and algebraic spaces are 
concentrated, the compact $\beta$-resolution property and the $\beta$-resolution property 
coincide for schemes and algebraic spaces.

The following simple Lemma will be important.
\begin{lemma}[{\cite[Prop.~1.8 (v)]{2013arXiv1306.5418G}}]\label{lem:quaff_beta_res}
  Let $f \colon X \to Y$ be a quasi-affine morphism of algebraic stacks.
  Let $\beta$ be a cardinal.
  If $Y$ has the $\beta$-resolution property or the compact $\beta$-resolution property, then so does $X$.
\end{lemma}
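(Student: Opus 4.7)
The plan is to factor the quasi-affine morphism $f$ as $f = \bar{f}\circ j$, where $\bar{X} := \spec_Y(f_*\Orb_X)$, the morphism $\bar{f}\colon \bar X\to Y$ is affine, and $j\colon X\hookrightarrow \bar X$ is a quasi-compact open immersion. It then suffices to show that the $\beta$-resolution property is preserved under each of these two classes of morphisms separately. In both cases the generating set on the target will simply be pulled back along the structure morphism.

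For the affine step, I would take a generating set $\mathcal{B}\subseteq \VB{Y}$ of cardinality $\leq \beta$ and check that $\bar{f}^*\mathcal{B} := \{\bar{f}^*E : E\in\mathcal{B}\}\subseteq \VB{\bar X}$ does the job on $\bar X$. The key input is that for any $M\in\QCOH(\bar X)$, the counit $\bar{f}^*\bar{f}_*M\to M$ is surjective; this may be verified smooth-locally on $Y$, where it reduces to the surjection $N\tensor_A B\to N$, $b\tensor n\mapsto bn$, for any $B$-module $N$ over a ring map $A\to B$. Given this, any surjection $\bigoplus_i E_i\twoheadrightarrow \bar{f}_*M$ with $E_i\in\mathcal{B}$ yields a surjection $\bigoplus_i \bar{f}^*E_i\twoheadrightarrow M$ after applying the right-exact, coproduct-preserving functor $\bar{f}^*$ and composing with the counit.

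For the open immersion step, given $M\in\QCOH(X)$ I would use two facts: $(i)$ the pushforward $j_*M$ is quasi-coherent on $\bar X$, which follows from Lemma~\ref{lem:qcqs_conditions}\itemref{item:qcqs_conditions:bdd_psh_cplx} applied in degree zero (since $j$ is quasi-compact and quasi-separated); and $(ii)$ the counit $j^*j_*M\to M$ is an isomorphism, because $j$ is a flat monomorphism---this is the quasi-coherent shadow of Lemma~\ref{lem:adm_basic}\itemref{lem:adm_basic:item:ff_bc} applied to $\mathcal{T}=\DQCOH$ (and may also be checked directly smooth-locally using that $\Delta_j$ is an isomorphism). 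Then, given any generating set $\mathcal{B}_{\bar X}\subseteq \VB{\bar X}$ for $\bar X$, pulling back a surjection $\bigoplus_i E_i\twoheadrightarrow j_*M$ along the exact functor $j^*$ produces a surjection $\bigoplus_i j^*E_i\twoheadrightarrow M$, as required.

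There is no substantial obstacle: the only subtlety is the twin statements about counits (surjectivity in the affine case, bijectivity in the open immersion case), and both reduce to routine smooth-local computations using facts already collected in \S\ref{sec:qc_shv}. Cardinalities are preserved throughout since only a single pullback functor is applied to each generating set.
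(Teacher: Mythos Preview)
Your proof is correct and follows essentially the same idea as the paper: take the resolving set $\mathcal{B}$ on $Y$ and show that $f^*\mathcal{B}$ resolves $X$, using that the counit $f^*f_*M\to M$ is surjective. The paper states this counit surjectivity for quasi-affine $f$ in one breath, whereas you unpack it via the factorization $f=\bar f\circ j$ into an affine map (counit surjective) followed by a quasi-compact open immersion (counit an isomorphism); this is exactly how one proves the fact the paper invokes, so the two arguments are the same up to level of detail.
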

\begin{proof}
  If $\mathcal{B}\subseteq \VB{Y}$ is a resolving set of cardinality $\beta$, then
  $f^*\mathcal{B}=\{f^*E\;:\;E\in \mathcal{B}\}$ is a resolving set of cardinality $\beta$. Indeed, $f^*$ is right-exact and $f^*f_*M\to
  M$ is surjective for every $M\in\QCOH(X)$. In addition, if $\mathcal{B}$ consists of compact objects 
  of $\DQCOH(Y)$, then $f^*\mathcal{B}$ consists of compact objects in $\DQCOH(X)$ 
  (Example \ref{ex:pres_cpt_conc})
\end{proof}
\begin{remark}\label{rem:finflat-descent-of-res-prop}
Similarly, there is the following partial converse: if $f\colon
X \to Y$ is finite and faithfully flat of finite presentation and $X$ has the
$\beta$-resolution property or compact $\beta$-resolution property, then so does $Y$. In this case, one takes $f_*\mathcal{B}$
as the resolving set and uses that $f_*$ is right-exact and that $f_*f^!M\to M$
is surjective~\cite[Prop.~1.13]{2013arXiv1306.5418G}. But $f^!=f^\times$
preserves coproducts (Corollary~\ref{C:fin_duality}), so $f_*$ preserves
compact objects (Example~\ref{ex:pres_cpt_adj}).
\end{remark}

\begin{proposition}\label{prop:beta-res}
  Let $X$ be an algebraic stack. If $X$ is quasi-affine, then $X$ has the
  $1$-resolution property. If $X$ is quasi-compact and quasi-separated with
  affine stabilizer groups, then the following are equivalent:
\begin{enumerate}
\item\label{prop:beta-res:aleph0}
  $X$ has the $\aleph_0$-resolution property;
\item\label{prop:beta-res:arb}
  $X$ has the resolution property; and
\item\label{prop:beta-res:qaff/GLn}
  $X=[V/\GL_{n,\Z}]$ where $V$ is a quasi-affine scheme.
\end{enumerate}
When these conditions hold, $X$ has affine diagonal.
\end{proposition}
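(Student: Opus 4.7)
The plan splits into three parts: the quasi-affine base case, the chain of equivalences $(1)\!\Leftrightarrow\!(2)\!\Leftrightarrow\!(3)$, and the affine-diagonal assertion (which I deduce from $(3)$). For the quasi-affine case, I would fix a quasi-compact open immersion $X\hookrightarrow \spec A$. Every $M\in\QCOH(X)$ is the restriction of a quasi-coherent sheaf on $\spec A$, and that extension is the quotient of a free $\Orb_{\spec A}$-module; restricting gives a surjection $\Orb_X^{\oplus I}\twoheadrightarrow M$, so $\{\Orb_X\}$ is a one-element resolving set and $X$ has the $1$-resolution property.

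The implication $(1)\Rightarrow (2)$ is immediate, and the substantive implication $(2)\Rightarrow (3)$ is the theorem of Totaro \cite{MR2108211}, extended by Gross \cite{2013arXiv1306.5418G} to quasi-compact and quasi-separated algebraic stacks with affine stabilizers: such a stack has the resolution property if and only if it is of the form $[V/\GL_{n,\Z}]$ with $V$ quasi-affine. This is the main obstacle and must be cited as a black box. For $(3)\Rightarrow (1)$, write $X=[V/\GL_{n,\Z}]$ with $V$ quasi-affine; the canonical morphism $f\colon X\to B\GL_{n,\Z}$ is representable (the fibre over a $\GL_n$-torsor $P\to T$ is the associated bundle $P\times^{\GL_n}V\to T$, a twist of $V$), hence quasi-affine, and by Lemma \ref{lem:quaff_beta_res} it suffices to establish the $\aleph_0$-resolution property for $B\GL_{n,\Z}$. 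A quasi-coherent sheaf on $B\GL_{n,\Z}$ is a $\Z[\GL_n]$-comodule; by the fundamental theorem on comodules, any such sheaf is the filtered colimit of its finitely generated sub-comodules, each of which is finite as a $\Z$-module and is a quotient of a finite-rank locally free $\GL_n$-representation built from the standard representation, its dual, and $\det^{\pm 1}$ by Schur-type constructions. Since $\GL_{n,\Z}$ is of finite type over $\Z$, there are only countably many isomorphism classes of finite-rank algebraic representations, yielding a countable resolving set.

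Finally, for the affine-diagonal assertion, I assume $(3)$. The morphism $f\colon X\to B\GL_{n,\Z}$ is quasi-affine, hence separated, so $\Delta_{f}\colon X\to X\times_{B\GL_n}X$ is a closed immersion and in particular affine. The diagonal $\Delta_{B\GL_n/\spec\Z}$ is affine because $\GL_{n,\Z}$ is an affine group scheme, and affineness is stable under base change. The factorization $\Delta_{X/\spec\Z}\colon X\xrightarrow{\Delta_f}X\times_{B\GL_n}X\to X\times_{\spec\Z}X$ then exhibits $\Delta_{X/\spec\Z}$ as a composition of affine morphisms, hence affine.
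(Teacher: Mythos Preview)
Your proof is correct and follows essentially the same route as the paper: the quasi-affine case via $\Orb_X$ as a generator, $(2)\Rightarrow(3)$ via Totaro--Gross, $(3)\Rightarrow(1)$ via Lemma~\ref{lem:quaff_beta_res} applied to the quasi-affine map $X\to B\GL_{n,\Z}$ together with the $\aleph_0$-resolution property of $B\GL_{n,\Z}$, and the affine-diagonal claim from the presentation $[V/\GL_{n,\Z}]$.

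One small caution: your justification that every finitely generated $\GL_{n,\Z}$-comodule is a quotient of a locally free one ``built from the standard representation, its dual, and $\det^{\pm 1}$ by Schur-type constructions'' is delicate over $\Z$, where representation theory of $\GL_n$ is not semisimple and the Schur-functor generation statements familiar from characteristic~$0$ require care. The paper instead cites Thomason \cite[Lem.~2.4]{MR893468}, which shows directly that every coherent sheaf on $B\GL_{n,\Z}$ is a quotient of a finite-rank subrepresentation of a finite sum of copies of the regular representation; this is more robust. Your subsequent observation that there are only countably many finite-rank representations (since $\GL_{n,\Z}$ is of finite type over $\Z$) then gives the $\aleph_0$ bound, exactly as the paper does. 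Your affine-diagonal argument is more detailed than the paper's one-line assertion and is correct as written.
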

\begin{proof}
The first statement follows from the fact that $\Orb_X\in \QCOH(X)$ is a
generator if $X$ is quasi-affine. Trivially,
\itemref{prop:beta-res:aleph0}$\implies$\itemref{prop:beta-res:arb}. That
\itemref{prop:beta-res:arb}$\implies$\itemref{prop:beta-res:qaff/GLn} is Totaro's
theorem~\cite{MR2108211,2013arXiv1306.5418G}.
Finally, to see that
\itemref{prop:beta-res:qaff/GLn}$\implies$\itemref{prop:beta-res:aleph0} it is
enough to prove that $B\GL_{n,\Z}$ has the $\aleph_0$-resolution property since
$X\to B\GL_{n,\Z}$ is quasi-affine. That $B\GL_{n,\Z}$ has the resolution
property is a special case of~\cite[Lem.~2.4]{MR893468}: every coherent sheaf
on $B\GL_{n,\Z}$ is a quotient of a finite-dimensional subrepresentation of a
finite number of copies of the regular representation.
%
%
%
Since there is a
countable number of vector bundles on $B\GL_{n,\Z}$, the $\aleph_0$-resolution
property holds.

The last statement follows since $[V/\GL_{n,\Z}]$ has affine diagonal.
\end{proof}

\begin{question}
Every algebraic stack that admits a finite flat cover $V\to X$ with $V$
quasi-affine has the compact $1$-resolution property by
Remark~\ref{rem:finflat-descent-of-res-prop}. Are all stacks with
the (compact) $1$-resolution property of this form? Is every algebraic space with the
$1$-resolution property quasi-affine?
\end{question}

Many quotient stacks have the resolution property:
\begin{example}\label{ex:res-prop}
Let $S=\spec R$ be a regular scheme of dimension at most $1$ (e.g.,
$R=\Z$ or $R$ is a field). Let $G\to S$ be a flat affine group
scheme of finite type and let $V$ be an algebraic space with an action of
$G$. Then $X=[V/G]$ has the resolution property in the following cases~\cite[Thm.~2.1]{MR2108211}:
\begin{enumerate}
\item\label{ex:item:qaff} $V$ is quasi-affine.
\item\label{ex:item:normal-ample} $V$ is normal, noetherian and has an ample family of line bundles
  (e.g.\ $V$ quasi-projective) and $G$ is an extension of a finite flat group
  scheme by a smooth group scheme with connected fibers (this is automatic if
  $R$ is a field).
\item\label{ex:item:linear}
  $V$ has an ample family of $G$-equivariant line bundles
  (e.g.\ $V$ is quasi-projective and $G$ is acting linearly).
\end{enumerate}
For~\itemref{ex:item:qaff}, use that $BG_S$ has the resolution property~\cite[Lem.~2.4]{MR893468}
and Lemma~\ref{lem:quaff_beta_res}. For~\itemref{ex:item:normal-ample} use~\cite[Lem.~2.10 and
  2.14]{MR893468} and for~\itemref{ex:item:linear} use~\cite[Lem.~2.6]{MR893468}.
Note that in~\itemref{ex:item:normal-ample} it is crucial that $V$ is normal to apply
Sumihiro's theorem~\cite[Thm.~1.6]{MR0387294} and deduce that sufficiently high
powers of the line bundles carry a $G$-action. In fact, this is false otherwise
(cf.~\cite[\S9]{MR2108211}) and whether $X$ has the resolution property in this
case is not known in general. Alternatively, we could assume that $V$ is
a quasi-projective scheme with a linear action of $G$, as
in \cite[Cor.~3.22]{MR2669705}.
\end{example}
We conclude this section with a simple lemma.
\begin{lemma}\label{L:uniform_compact_beta_res}
  Let $X$ be a quasi-compact and quasi-separated algebraic stack. The
  following conditions are equivalent:
  \begin{enumerate}
  \item \label{LI:uniform_compact_beta_res:allcompact} $X$ has the
    compact $\beta$-resolution property.
  \item \label{LI:uniform_compact_beta_res:1compact} $X$ has the
    $\beta$-resolution property and there exists a vector bundle $F$
    such that $\supp(F)=X$ and $F$ is a compact object of $\DQCOH(X)$.
  \item \label{LI:uniform_compact_beta_res:1compactbis} There is a
    vector bundle $F$ on $X$ that is a compact object of $\DQCOH(X)$
    and a subset $\mathcal{B} \subseteq \VB{X}$ of cardinality
    $\leq \beta$ such that the set $\{F\otimes E\}_{E \in \mathcal{B}}$
    generates $\QCOH(X)$.
  \item \label{LI:uniform_compact_beta_res:uniform} There is a subset
    $\mathcal{B} \subseteq \VB{X}$ of cardinality $\leq \beta$ that
    generates $\QCOH(X)$ and an integer $r\geq 0$ such that
    $\Ext^i_{\Orb_X}(E,N)=0$ for all $i>r$, all $E\in \mathcal{B}$ and
    all $N\in \QCOH(X)$.
  \item \label{LI:uniform_compact_beta_res:uniformbis} There is a subset
    $\mathcal{B} \subseteq \VB{X}$ of cardinality $\leq \beta$ that
    generates $\QCOH(X)$ and an integer $r\geq 0$ such that the
    natural map
    \[
      \trunc{\geq j}\RHom_{\Orb_X}(E,\cplx{M}) \to \trunc{\geq
        j}\RHom_{\Orb_X}(E,\trunc{\geq j-r}\cplx{M})
    \]
    is a quasi-isomorphism for all integers $j$, all $E\in \mathcal{B}$ and
    all $\cplx{M}\in \DQCOH(X)$.
  \end{enumerate}
  \end{lemma}
  \begin{proof}
    \itemref{LI:uniform_compact_beta_res:allcompact}$\Rightarrow$\itemref{LI:uniform_compact_beta_res:1compact}:
    there is a surjection $F \twoheadrightarrow O_X$ with $F$ a vector
    bundle on $X$ that is compact in $\DQCOH(X)$. Note that
    $\supp(F)=X$.

    \itemref{LI:uniform_compact_beta_res:1compact}$\Rightarrow$\itemref{LI:uniform_compact_beta_res:1compactbis}:
    If $\mathcal{C}\subseteq\VB{X}$ is a resolving set of vector
    bundles, then so is the set $\{F\otimes F^*\otimes E\}_{E\in
      \mathcal{C}}$. Indeed, the
evaluation map $F\otimes F^* \twoheadrightarrow \Orb_X$ is surjective. We take
    $\mathcal{B} = \{F^*\otimes E \}_{E\in \mathcal{C}}$.

    \itemref{LI:uniform_compact_beta_res:1compactbis}$\Rightarrow$\itemref{LI:uniform_compact_beta_res:uniform}:
    for all $i$, all $E\in \mathcal{B}$, and $N\in \QCOH(X)$ we have
    $\Ext^i_{\Orb_X}(F\otimes E,N)=\Ext^i_{\Orb_X}(F,E^*\otimes
    N)$. Now choose $r\geq 0$ as in Lemma \ref{lem:char_compact_stack}
    for $F$. An identical argument gives
    \itemref{LI:uniform_compact_beta_res:1compactbis}$\Rightarrow$\itemref{LI:uniform_compact_beta_res:uniformbis}. Also,
    \itemref{LI:uniform_compact_beta_res:uniformbis}$\Rightarrow$\itemref{LI:uniform_compact_beta_res:uniform}
    is trivial.

    \itemref{LI:uniform_compact_beta_res:uniform}$\Rightarrow$\itemref{LI:uniform_compact_beta_res:allcompact}:
    immediate from Lemma \ref{lem:char_compact_stack}.
  \end{proof}

\section{\Crisp{} stacks}
In this section we define $\beta$-\crisp{} stacks and show that the
compact $\beta$-resolution property implies $\beta$-\crispness.

\begin{definition}\label{def:beta-perfect}
  Let $\beta$ be a cardinal. Let $X$ be an algebraic stack. We say that $X$ satisfies the 
  \fndefn{$\beta$-Thomason condition} if:
  \begin{enumerate}
  \item $\DQCOH(X)$ is compactly generated by a set of cardinality $\leq \beta$; and
  \item for every quasi-compact open immersion $j\colon U \hookrightarrow X$ there
    exists a perfect object $P$ of $\DQCOH(X)$ with support $X\setminus U$.
  \end{enumerate}
   We say that $X$ is \fndefn{$\beta$-\crisp} if for every representable, \'etale, separated, 
   and quasi-compact morphism $X' \to X$ the stack $X'$ satisfies the $\beta$-Thomason 
   condition.
\end{definition}
By Lemmas~\ref{lem:supports_and_supported_complexes}
and~\ref{lem:compactgen-stable-quasiaffine}, an equivalent definition for $\beta$-crispness is
that for every representable \'etale morphism $W \to X$ that is quasi-compact and separated,
and every quasi-compact open immersion $j\colon U \hookrightarrow W$, the
triangulated category
\[
\DQCOH[,|W|\setminus |U|](W) = \{ M \in
\DQCOH(W) \suchthat j^*M \homotopic 0\}
\]
is generated by a set of
cardinality $\leq \beta$ consisting of compact objects of
$\DQCOH(W)$.
\begin{lemma}\label{lem:compactgen-stable-quasiaffine}
  Let $f \colon X \to Y$ be a quasi-affine morphism of algebraic stacks.
  Let $\beta$ be a cardinal.
  If $\DQCOH(Y)$ is compactly generated by $\beta$ objects, then so is $\DQCOH(X)$. In fact, if $\mathcal{B} \subseteq \DQCOH(Y)^c$ is a subset that generates $\DQCOH(Y)$, then the set $\{ \LDERF \QCPBK{f}B \suchthat B \in \mathcal{B}\}$ compactly generates $\DQCOH(X)$. 
\end{lemma}
\begin{proof}
  Since $f$ is concentrated, $\RDERF\QCPSH{f}$ preserves small coproducts. Since $\RDERF \QCPSH{f}$ is conservative (Corollary \ref{C:qaff_cons}), the result
  follows from Example \ref{ex:compact_cons_gen}.
\end{proof}
\begin{remark}
We do not know if the analogue of Lemma~\ref{lem:compactgen-stable-quasiaffine}
holds for the $\beta$-Thomason condition or even $\beta$-\crispness{}.
\end{remark}
The main result of this section is the following 
\begin{proposition}\label{prop:perf_gen_res_property}
  Let $X$ be a quasi-compact algebraic stack with affine diagonal.
  Let $\beta$ be a cardinal. 
  If $X$ has the compact $\beta$-resolution property, then it is $\beta$-\crisp. In 
  particular, concentrated stacks with the $\beta$-resolution property are 
  $\beta$-crisp. In fact, $\DQCOH(X)$ is compactly generated by any resolving set of 
  compact vector bundles. 
\end{proposition}
\begin{proof}
  By Lemma \ref{lem:quaff_beta_res} the compact $\beta$-resolution property is preserved under quasi-affine morphisms.
  By Zariski's Main Theorem \cite[Thm.~16.5]{MR1771927} \'etale
  morphisms that are quasi-compact, separated, and representable are
  quasi-affine. Thus it is enough to prove the following statement: if $j \colon V \hookrightarrow X$ is a quasi-compact open immersion with complement $|Z|$, then there exists a generating subset $\mathcal{B}_{|Z|} \subset \DQCOH[,|Z|](X)$, of cardinality $\leq \beta$, with compact image in $\DQCOH(X)$. 

  Choose a resolving set $\mathcal{B}\subseteq \VB{X}$ of
  cardinality $\leq \beta$ and an integer $r\geq 0$ as in Lemma
  \ref{L:uniform_compact_beta_res}\itemref{LI:uniform_compact_beta_res:uniformbis}. Let $M \in \DQCOH(X)$. We claim that if $n\in \Z$ is such that $\COHO{n}(M) \neq 0$, then there exists an $E\in \mathcal{B}$ and a non-zero morphism $E[-n] \to M$ in $\DQCOH(X)$. 
  We prove this claim by a small modification (which is likely
  well-known---e.g. \cite[Rem.~1.2.10]{MR3037900}) to the argument of A.~Neeman \cite[Ex.~1.10]{MR1308405}.
  
  Thus, for all $E \in
  \mathcal{B}$, all $n\in \Z$ and all $M \in \DQCOH(X)$ we have
  \[
  \Hom_{\Orb_X}(E,M[n]) = \Hom_{\Orb_X}(E,\trunc{\geq -r}M[n]).
  \]
  We may consequently assume that $M \in \DQCOH^+(X)$. 
  By \cite[Thm.~3.8]{lurie_tannaka}, the natural functor $\DCAT^+(\QCOH(X)) \to \DQCOH^+(X)$ is an equivalence of triangulated categories. 
  Hence, we are free to assume that $M$ is a complex $(\cdots\rightarrow M^k \xrightarrow{d^{k}} M^{k+1}\rightarrow\cdots )$  of quasi-coherent $\Orb_X$-modules. 
  By assumption $\COHO{n}(M) \neq 0$, so there exists $E\in \mathcal{B}$ and a
  morphism $E\to \ker (d^{n})$ such that $E\to \ker (d^{n})\to \COHO{n}(M)$
  is non-zero.
  The composition $E \to \ker(d^{n}) \to M^{n}$ thus induces a non-zero morphism $E \to M[n]$ in $\DQCOH(X)$ and we deduce the claim.

  We now return to the proof of the Proposition. The above considerations shows
  that
  the set $\mathcal{B}$ compactly generates $\DQCOH(X)$. Now let $i \colon Z \hookrightarrow X$ be a closed immersion with support $|Z|$. 
  Since $X$ has the resolution property and $j \colon V \to X$ is quasi-compact, we may choose $i$ such that the quasi-coherent ideal sheaf $I$ defining $Z$ in $X$ is of finite type. 
  It follows that there is a surjection $F \to I$, where $F$ is a finite direct sum of objects of $\mathcal{B}$.
  Corresponding to the morphism $s \colon F \to \Orb_X$, we obtain a section $s^\vee \in \Gamma(X,F^\vee)$ with vanishing locus $|Z|$. 
  If $K(s^\vee)$ is the resulting Koszul complex
  \cite[IV.2]{MR801033}, 
  then $K(s^\vee)$ is a perfect complex on $X$ with support $|Z|$. By
  Lemma
  \ref{lem:supports_and_supported_complexes}\itemref{lem:supports_and_supported_complexes:item:2},
  we deduce the claim.
\end{proof}
We conclude this section with examples of algebraic stacks that are \crisp.
\begin{example}\label{ex:res_ring}
  Let $A$ be a ring. 
  Then $\spec A$ is $1$-\crisp. 
  Indeed, $\spec A$ has the $1$-resolution property and is concentrated,
  thus the result follows from Proposition \ref{prop:perf_gen_res_property}. 
\end{example}
\begin{example}\label{ex:res_bt}
  Let $X$ be a concentrated stack with affine stabilizers and the resolution
  property. Then $X$ has the $\aleph_0$-resolution property and affine diagonal
  (Proposition \ref{prop:beta-res}), hence is $\aleph_0$-\crisp{} (Proposition~\ref{prop:perf_gen_res_property}).

  Examples are stacks
  of the form $X=[V/G]$ where $V$ and $G$ are as in Example~\ref{ex:res-prop}
  and either $S=\spec(\Q)$ or $G$ is linearly reductive (e.g., a torus).
  Indeed, under these
  assumptions on $G$, the classifying stack $BG$ is concentrated, so $X$ is
  concentrated since $X\to BG$ is representable. More generally, we can take
  any stack $X=[V/G]$ as in Example~\ref{ex:res-prop} with linearly reductive
  stabilizers. Such stacks are concentrated by
  \cite[Thm.~C]{hallj_dary_alg_groups_classifying}. 
\end{example}
\begin{remark}
  Let $X$ be a quasi-compact algebraic stack with affine diagonal and the
  resolution property. When $X$ is concentrated, then $\DQCOH(X)$ is compactly
  generated (Proposition \ref{prop:perf_gen_res_property}) and $\DQCOH(X)$ is
  an example of a unital algebraic stable homotopy
  category~\cite[Def.~1.1.4]{MR1388895}. Note that the localizing envelope of a
  set of compact generators is the whole category (Corollary~\ref{cor:thomason_gens}).

  The proof of Proposition
  \ref{prop:perf_gen_res_property} actually shows that even if 
  $X$ is not concentrated, then $\DCAT(\QCOH(X))$ is perfectly generated. Note that since $\DCAT(\QCOH(X))$ may not be compactly generated, Corollary \ref{cor:thomason_gens} does not apply. Nonetheless $\DCAT(\QCOH(X))$ is well-generated \cite[Thm.~0.2]{MR1874232} and there is a version of Corollary \ref{cor:thomason_gens} for well-generated triangulated categories \cite[Thm.~1.14]{MR1812507}. This result and others are also discussed in \cite{2016arXiv160406018A}. 
  Thus, $\DCAT(\QCOH(X))$ is a non-algebraic stable homotopy category in the sense
  of~\cite[Def.~1.1.4]{MR1388895}.
  Note that 
  this says nothing about perfect or compact generation of $\DQCOH(X)$, because the 
  functor $\DCAT(\QCOH(X)) \to \DQCOH(X)$ can fail to be fully faithful or essentially 
  surjective (e.g., if $X=B\Ga$ in positive characteristic 
  \cite{hallj_neeman_dary_no_compacts}). Compact generation of $\DQCOH(X)$, however, is sufficient to prove that 
  $\DCAT(\QCOH(X)) \to \DQCOH(X)$ is an equivalence   \cite{hallj_neeman_dary_no_compacts}.
\end{remark}
\section{Quasi-finite flat locality of
  \texorpdfstring{$\beta$-}{}\crispness{} and applications}\label{sec:applications}
We are now in a position to prove Theorems \ref{thm:main_qf},
\ref{thm:main_globaltype}, and \ref{thm:main_list} as well as
addressing the applications mentioned in the Introduction.
\begin{proof}[Proof of Theorem \ref{thm:main_list}]
  Take $\mathscript{D} = \REP[\mathrm{fp}]{X}$. By
  Examples~\ref{ex:dmodqc_defn}, \ref{ex:dmodqc_mv}, \ref{ex:dmodqc_adm}
  and~\ref{ex:finiteflat_qp}, the $\mathscript{D}$-presheaf of triangulated
  categories $\DQCOH$ satisfies Conditions
  \itemref{thm:general_qffdesc_cpt_gen:item:1}--\itemref{thm:general_qffdesc_cpt_gen:item:5} of Theorem \ref{thm:general_qffdesc_cpt_gen}. The result now
  follows from Lemma \ref{lem:supports_and_supported_complexes} and
  Theorem \ref{thm:general_qffdesc_cpt_gen}.
\end{proof}
Proving Theorems \ref{thm:main_qf} and \ref{thm:main_globaltype} is now very simple.
\begin{proof}[Proof of Theorem \ref{thm:main_qf}]
  By \cite[Thm.~7.1]{MR2774654} there exists a locally quasi-finite flat morphism $p \colon X' \to X$, where $X'$ is a scheme.
  Since $X$ is quasi-compact, we may further assume that $X'$ is an affine scheme, and consequently the morphism $p$ is also quasi-compact and separated. 
  The result now follows by combining Example \ref{ex:res_ring} with Theorem \ref{thm:main_list}. 
\end{proof}
\begin{proof}[Proof of Theorem \ref{thm:main_globaltype}]
  Since $X$ is of s-global type, there exists an integer $N>0$, a quasi-affine $\Q$-scheme $V$ with an action of $\GL_{N}$, together with an \'etale, representable, separated and finitely presented morphism $p \colon [V/\GL_N] \to X$.
  Now the result follows from Theorem \ref{thm:main_list} and Example \ref{ex:res_bt}.   
\end{proof}

We now recall some results of Sumihiro and Brion.
\begin{proposition}[Sumihiro and Brion]\label{P:sglobal_sumi}
 Let $X$ be a variety over a field $k$. Let $G$ be an
 affine algebraic $k$-group scheme acting on $X$. Assume that either $X$ is 
 \begin{enumerate}
 \item geometrically normal, or 
 \item quasi-projective and either
   \begin{enumerate}
   \item geometrically semi-normal, or
   \item $\kar k=p>0$, or
   \item the action is linearizable, or
   \item $G^0$ is a torus.
   \end{enumerate}
 \end{enumerate}
 Then there exists a finite field extension $k'/k$, a quasi-projective variety
 $W'$ over $k'$ with a linear action of $G'=\red{(G\tensor_k k')}^0$ and an
 \'etale $G'$-equivariant morphism $f\colon W'\to X_{k'}$.
 %
\end{proposition}
\begin{proof}
  Choose a finite field extension $k'/k$ such that $X'=X\times_k k'$ is normal
  (resp.\ semi-normal, resp.\ $G'$ is smooth, resp.\ $G'$ is a split torus).
  By construction $G'$ is then a smooth connected group scheme.

  If $X'$ is normal, then by Sumihiro's theorem, we
  can choose $f$ as a Zariski-open covering \cite[Lem.~8]{MR0337963}
  (see~\cite[Thm.~3.8]{MR0387294} when $k$ is not algebraically closed, or
  replace $k$ with a finite field extension).
  If $X'$ is semi-normal and quasi-projective, then an \'etale $f$ exists by
  Brion~\cite[Thm.~4.7]{MR3329192}. If $\kar k>0$, then an \'etale $f$ exists
  by~\cite[\S 4.3]{MR3329192}. If the action is already linearizable, then let
  $f$ be the identity. If $G'$ is a split torus, then an \'etale $f$, with $W'$
  affine, exists by~\cite[Thm.~4.8]{MR3329192}.
\end{proof}
In \cite[Thm.~2.6]{AHR_lunafield} it is proved that if $G$ acts with linearly
reductive stabilizers at closed points (e.g., if $G$ is linearly reductive),
then the result of Proposition~\ref{P:sglobal_sumi} holds for any algebraic
space of finite type (not necessarily normal, quasi-projective or even
separated). In particular, if $G^0$ is a torus, we may drop the requirement
that $X$ is quasi-projective.

The Corollary of Theorem \ref{thm:main_globaltype} is a special case of the following.
\begin{corollary}\label{C:compact-generated-stacks}
Let $(X,G,k)$, and $(W',G',k')$ be as in Proposition~\ref{P:sglobal_sumi}. Then
\begin{enumerate}
\item\label{C:cg:res-prop} $[W'/G']$ has the resolution property;
\item\label{C:cg:qff} the map $[W'/G']\to [X'/G']\to [X/G]$ is quasi-finite and faithfully flat; and
\item\label{C:cg:s-global} $[X/G]$ is of s-global type.
\end{enumerate}
If in addition, $\kar k=0$ or $\red{(G\tensor_k \bar{k})}^0$ is a torus, then
\begin{enumerate}\setcounter{enumi}{3}
\item\label{C:cg:conc-crisp} $[W'/G']$ is concentrated and $\aleph_0$-crisp;
\item $\DCAT(\QCOH_G(X)) = \DQCOH([X/G])$ is compactly generated; and
\item for every $G$-invariant open subset $U \subseteq X$, there exists a
  compact perfect $G$-equivariant complex with support exactly $X\setminus U$.
\end{enumerate}
\end{corollary}
\begin{proof}
\itemref{C:cg:res-prop} is Example~\ref{ex:res-prop}\itemref{ex:item:linear},
\itemref{C:cg:qff} is by construction, and \itemref{C:cg:s-global} follows from
\itemref{C:cg:qff} and~\cite[Prop.~2.8(iii)]{rydh-2009}. Under the additional
assumption on $k$ and $G$, we have that $BG'$ is
concentrated~\cite[Thm.~B]{hallj_dary_alg_groups_classifying}, hence so is
$[W'/G']$ and \itemref{C:cg:conc-crisp} follows from
Proposition~\ref{prop:perf_gen_res_property}.
It follows that $[X/G]$ is $\aleph_0$-crisp by Theorem~\ref{thm:main_list}.
We always have that $\QCOH_G(X)=\QCOH([X/G])$ and we have that
$\DCAT(\QCOH([X/G]))=\DQCOH([X/G])$ since
$\DQCOH([X/G])$ is compactly generated~\cite{hallj_neeman_dary_no_compacts}.
\end{proof}
\begin{example}[Brauer groups]\label{ex:Br=Br'}
Let $X$ be a quasi-compact algebraic stack with quasi-finite and separated
diagonal.  Let $\alpha\in H^2(X,\mathbb{G}_m)$ be an element of the (bigger)
cohomological Brauer group and let
$\mathscript{X}$ denote the $\mathbb{G}_m$-gerbe corresponding to $\alpha$.
Since $X$ has quasi-finite diagonal, there exists a quasi-finite flat
presentation $p\colon X'\to X$ such that $X'$ is affine and
$p^*\alpha=0$. Indeed, recall that $X$ admits a quasi-finite flat presentation
$q\colon U\to X$ by an affine scheme \cite[Thm.~7.1]{rydh-2009} and then we may
trivialize $q^*\alpha$ by a further surjective \'etale morphism of schemes. In
particular, $\mathscript{X}\times_X X'=X'\times B\mathbb{G}_m$ has the
resolution property and is cohomological affine. It follows that
$\DQCOH(\mathscript{X}\times_X X')$ and $\DQCOH(\mathscript{X})$ have
countable sets of compact generators by
Proposition~\ref{prop:perf_gen_res_property} and Theorem~\ref{thm:main_list}.

Let $\mathscript{D}=\REP[\mathrm{fp}]{X}$ and let
$\mathcal{T}=\DCAT(\QCOH^\alpha(-))$ be the presheaf of derived categories of
$\alpha$-twisted sheaves. Then the conditions of
Theorem~\ref{thm:general_qffdesc_cpt_gen} are satisfied. Indeed, there is a
canonical decomposition $\DQCOH(\mathscript{X}\times_X
T)=\DCAT(\QCOH(\mathscript{X}\times_X T))=\bigoplus_{m\in
  \Z}\DCAT(\QCOH^{m\alpha}(T))$ which is respected by pullbacks\footnote{Here
  we have tacitly used~\cite{hallj_neeman_dary_no_compacts} to identify
  $\DQCOH(\mathscript{X}\times_X
T)=\DCAT(\QCOH(\mathscript{X}\times_X T))$, which holds since
$\DQCOH(\mathscript{X}\times_X T)$ is compactly generated.
In general, one could \emph{define}
$\DQCOH^\alpha(X)$ as the degree one part of $\DQCOH(\mathscript{X})$.}. Since
$\DQCOH(\mathscript{X}\times_X -)$ satisfies conditions \itemref{thm:general_qffdesc_cpt_gen:item:1}--\itemref{thm:general_qffdesc_cpt_gen:item:5} of
Theorem~\ref{thm:general_qffdesc_cpt_gen}, so does $\mathcal{T}$.

Thus, $\DCAT(\QCOH^\alpha(X'))=\DCAT(\QCOH(X'))=\DQCOH(X')$ is
compactly generated by $1$ object with supports. It follows that
$\DCAT(\QCOH^\alpha(X))$ is compactly generated by $1$ object with supports
(Theorem~\ref{thm:general_qffdesc_cpt_gen}). The endomorphism algebra of this
object, in a dg-enhancement of $\DCAT(\QCOH^\alpha(X))$,
is a derived Azumaya algebra~\cite{MR2957304}.

Alternatively, one could argue as follows. Take the degree $1$ part of the
compact generators of $\DQCOH(\mathscript{X})$. This gives a countable
generating set $\{P_i\}$ of compact objects in $\DQCOH^\alpha(X)$. For
sufficiently large $n>0$, the direct sum $P:=\bigoplus_{i=1}^n P_i$ gives a
compact object that locally generates $\DQCOH^\alpha(X)$. Indeed, since
$\DQCOH^\alpha(X')=\DQCOH(X')$ and $X'$ is affine, it is sufficient that
$\supph(p^*P)=|X'|$ (Lemma~\ref{L:local-generator}). This compact local
generator $P$ is enough to produce a derived Azumaya
algebra~\cite[Prop.~4.6]{MR2957304}.

This latter argument also works for any quasi-compact and quasi-separated
algebraic stack $X$ such that $\DQCOH(\mathscript{X})$ has a set of compact
generators. In this case, take $p\colon X'\to X$ as a smooth presentation by an
affine scheme such that $p^*\alpha=0$. By~\cite[Thm.~2.26]{AHR_lunafield},
$\DQCOH(\mathscript{X})$ is compactly generated for any algebraic stack $X$ of
finite type over a field, with affine diagonal, and linearly reductive
stabilizers at closed points. We can thus conclude that for such $X$, every
cohomological Brauer class comes from a derived Azumaya algebra.
\end{example}
\begin{example}[Sheaves of linear categories on derived stacks]\label{ex:derived}
Let $(X,\Orb_X)$ be a derived (or spectral) Deligne--Mumford stack. The
$0$-truncation $(X,\pi_0\Orb_X)$ is an ordinary Deligne--Mumford stack with the
same underlying topos $X$. In fact, even for a non-connective
$\mathbb{E}_\infty$-algebra $A$, the category of \'etale $A$-algebras is equivalent to
the category of \'etale $\pi_0
A$-algebras~\cite[Thm.~7.5.0.6]{lurie_highalg}.

Let $F\in \mathrm{QStk}(X)$ be a quasi-coherent stack on $X$ \cite[\S 8]{dag11}, e.g.,
$F=\QCOH(X)$. For every object $U$ in the small \'etale topos of $X$, this
gives an $\Orb_X(U)$-linear $\infty$-category $F(U)$. Let $hF$ be the
presheaf of triangulated categories that assigns to each \'etale $U \to X$ the homotopy category of $F(U)$. Compact generation of $F(U)$ is a statement about
its homotopy category \cite[Rem.~1.4.4.3]{lurie_highalg}. Moreover,
since the conditions \itemref{thm:general_qffdesc_cpt_gen:item:1}--\itemref{thm:general_qffdesc_cpt_gen:item:5} of Theorem \ref{thm:general_qffdesc_cpt_gen} 
can all be verified \'etale-locally (similarly to To\"en's
locally presentable dg-categories \cite{MR2957304}), it follows that
Theorem \ref{thm:general_qffdesc_cpt_gen} can be applied to $hF$ to deduce
compact generation of $F(X)$ from local compact generation of $F$.
\end{example}
\appendix
\section{Generators from above}
Let $w\colon W \to X$ be an \'etale, separated, finitely presented and representable morphism of algebraic stacks. Define 
\[
\mathcal{P}'(w) = \bigl\{ \RDERF \QCPSH{w}\RDERF \QCPSH{u}P \suchthat (u\colon U \to W)  \in \REP[\mathrm{fp},\et,\mathrm{sep}]{W}\, \mbox{and}\, P \in \DQCOH(U)^c\bigr\}
\]
and let $\mathcal{P}(w) \subseteq \DQCOH(X)$ be the smallest thick subcategory of $\DQCOH(X)$ containing $\mathcal{P}'(w)$. The following Lemma---requested by 
Neeman---is a natural generalization to algebraic stacks of a result that has found applications to Grothendieck duality for schemes \cite[Lem.~3.1]{2014arXiv1406.7599N}. 
\begin{lemma}
  Let $w\colon W \to X$ be an \'etale, separated, finitely presented, representable and surjective
  morphism of algebraic stacks. If $X$ is quasi-compact and quasi-separated and 
  $\DQCOH(X)$ is compactly generated, then
  \[
  \DQCOH(X)^c \subseteq \mathcal{P}(w).
  \]
\end{lemma}
\begin{proof}
  Let $\mathcal{C} \subseteq \REP[\mathrm{fp},\et,\mathrm{sep}]{X}$ be the subcategory with 
  objects those $V$ such that $\DQCOH(V)^c \subseteq 
  \mathcal{P}(W\times_X V \to V)$. Note that if $W\times_X V \to V$ admits a section, 
  then it is clear that $V \in \mathcal{C}$. In particular, it follows immediately that $W\in 
  \mathcal{C}$. Now we will prove that $X \in \mathcal{C}$ 
  using~\cite[Thm.~6.1]{MR2774654}. To do this, we need to verify the following 
  three conditions for a morphism $v\colon V' \to V$ in
  $\REP[\mathrm{fp},\et,\mathrm{sep}]{X}$. 
\begin{itemize}
\item[(D1)] If $V \in \mathcal{C}$, then $V' \in \mathcal{C}$; 
\item[(D2)] if $V' \in \mathcal{C}$ and $v$ is finite and surjective,
  then $V \in \mathcal{C}$; and
\item[(D3)] if $v$ is an \'etale neighborhood of $V\setminus U$,
  where $j\colon U \to V$ is an open immersion in $\REP[\mathrm{fp},\et,\mathrm{sep}]{X}$, 
  and $U$ and $V'$ belong to $\mathcal{C}$, then $V \in \mathcal{C}$.
\end{itemize}
For (D1): Lemma \ref{lem:compactgen-stable-quasiaffine} implies that $\DQCOH(V')^c$ is 
the smallest thick subcategory of $\DQCOH(V')$ containing $\LDERF\QCPBK{v}Q$, where 
$Q\in \DQCOH(V)^c$. A simple argument involving flat base change (Theorem 
\ref{thm:fcd_coprod}\itemref{thm:fcd_coprod:item:fl_bc}) and the preservation of compact objects under concentrated morphisms (Example \ref{ex:pres_cpt_conc}) now shows that $V' \in \mathcal{C}$. 
For (D2): Proposition \ref{prop:adm_finite_dev} implies that $\DQCOH(V)^c$ is the smallest thick subcategory of $\DQCOH(V)$ containing the collection of complexes $\RDERF \QCPSH{v}Q$, where $Q \in \DQCOH(V')^c$. The property now follows from the trivial observation that 
\[
\bigl\{\RDERF \QCPSH{v}M \suchthat M \in \mathcal{P}(W\times_X V' \to V')\bigr\} \subseteq \mathcal{P}(W\times_X V \to V).
\]
For (D3): we note that the Mayer--Vietoris triangle of Lemma \ref{lem:mv_triangle}\itemref{item:mv_triangle:MV} implies that if $P \in \DQCOH(V)^c$, then there is a distinguished triangle:
\[
P \to \RDERF \QCPSH{j}\LDERF \QCPBK{j}P \oplus \RDERF \QCPSH{f}\LDERF \QCPBK{f}P \to \RDERF\QCPSH{f}\LDERF \QCPBK{f}\RDERF\QCPSH{j}\LDERF \QCPBK{j}P \to P[1].
\]
Since $\LDERF \QCPBK{f}\RDERF \QCPSH{j} \simeq \RDERF \QCPSH{j'} \LDERF \QCPBK{f'}$, where $j' \colon U'=U\times_V V' \to V'$ and $f'\colon U' \to U$ are the projections,
it follows immediately that $\DQCOH(V)^c$ is contained in the smallest thick subcategory of $\DQCOH(V)$ containing the objects:
\begin{itemize}
\item $\bigl\{ \RDERF \QCPSH{j}M \suchthat M \in \mathcal{P}(W\times_X U \to U)\bigr\}$,
\item $\bigl\{ \RDERF \QCPSH{f}M \suchthat M \in \mathcal{P}(W\times_X V' \to V')\bigr\}$, and
\item $\bigl\{ \RDERF \QCPSH{(f\circ j')}M \suchthat M \in \mathcal{P}(W\times_X U' \to U')\bigr\}$.
\end{itemize}
But all the objects above are contained in $\mathcal{P}(W\times_X V \to V)$ and the claim follows. Since $W \in \mathcal{C}$, we may now conclude that $X \in \mathcal{C}$.
\end{proof}

\bibliography{references}
\bibliographystyle{bibstyle}
\end{document}